\documentclass[10pt]{amsart}

\usepackage{dsfont, amsmath, times}
\usepackage[euler-digits]{eulervm}
\numberwithin{equation}{section}

\usepackage{hyperref}
\hypersetup{
    pdftoolbar=true,
    pdfmenubar=true,
    pdffitwindow=false,
    pdfstartview={FitH},
    pdftitle={Multiple orthogonal geodesic chords in nonconvex Riemannian disks using obstacles},
    pdfauthor={Roberto Giamb\`o, Fabio Giannoni and Paolo Piccione},
    pdfsubject={},
    pdfkeywords={},
    pdfnewwindow=true,
    colorlinks=true, 
    linkcolor=blue,
    citecolor=blue,
    urlcolor=blue,
}

\makeatletter
\g@addto@macro\bfseries{\boldmath}
\makeatother


\newcommand{\dist}{\operatorname{dist}}
\newcommand{\cat}{\operatorname{cat}}
\newcommand{\Rcal}{\mathcal R}
\newcommand{\Hcal}{\mathcal H}
\newcommand{\Ncal}{\mathcal N}

\newcommand{\Dcal}{\mathcal D}

\newcommand{\Dgot}{\mathfrak D}

\newcommand{\Ddt}{\tfrac{\mathrm D}{\mathrm dt}}
\newcommand{\Dds}{\tfrac{\mathrm D}{\mathrm ds}}


\theoremstyle{plain}\newtheorem{teo}{Theorem}[section]
\theoremstyle{plain}\newtheorem{prop}[teo]{Proposition}
\theoremstyle{plain}\newtheorem{lem}[teo]{Lemma}
\theoremstyle{plain}\newtheorem{cor}[teo]{Corollary}
\theoremstyle{definition}\newtheorem{defin}[teo]{Definition}
\theoremstyle{remark}\newtheorem{rem}[teo]{Remark}
\theoremstyle{definition}
\theoremstyle{remark}
\theoremstyle{plain}

\title[Multiple OGC's in Riemannian disks]{Multiple orthogonal geodesic chords in nonconvex Riemannian disks using obstacles}

\author[R.\ Giamb\`o ,\ F.\ Giannoni, P. Piccione]{Roberto Giamb\`o, Fabio Giannoni, Paolo Piccione}
\address{\begin{tabular}{lll}
Universit\`a di Camerino & & Universidade de S\~ao Paulo \\
Scuola di Scienze e Tecnologie & & Departamento de Matem\'atica \\
Camerino (MC)  & & S\~ao Paulo, SP \\
Italy & &  Brazil\\
\emph{E-mail address}: {\tt roberto.giambo@unicam.it} & & \emph{E-mail address}: {\tt piccione@ime.usp.br}\\
\phantom{\emph{E-mail address}:} {\tt fabio.giannoni@unicam.it}
\end{tabular}
}

\date{January 22nd, 2018}

\subjclass[2000]{37C29, 37J45, 58E10}

\begin{document}

\begin{abstract}
We use nonsmooth critical point theory and the theory of geodesics with obstacle to show a multiplicity result about orthogonal geodesic chords in a Riemannian manifold (with boundary) which is homeomorphic to an $N$-disk. This applies to brake orbits in a potential well of a natural Hamiltonian system, providing a further step towards the proof of a celebrated conjecture by Seifert  \cite{seifert}.
\end{abstract}

\maketitle

\section{Introduction}
Let $(\overline\Omega,g)$ be a compact smooth ($C^3$) Riemannian manifold with smooth ($C^2$) boundary $\partial\Omega$; denote by $\Omega=\overline\Omega\setminus\partial\Omega$. A \emph{geodesic chord} in $(\overline\Omega,g)$ is a geodesic $\gamma\colon[0,1]\to\overline\Omega$ with $\gamma(0),\gamma(1)\in\partial\Omega$ and $\gamma(s)\in \Omega$ for any $s \in ]0,1[$. An \emph{orthogonal geodesic chord} (OGC, for shortness) is a geodesic chord $\gamma$ in $(\overline\Omega,g)$ with $\dot\gamma(0)$ and $\dot\gamma(1)$ orthogonal to $\partial\Omega$ at $\gamma(0)$ and $\gamma(1)$ respectively.

In this paper, we will give a multiplicity result for OGC's in the case where $\overline\Omega$ is homeomorphic to an $N$-dimensional disk $\mathds D^N\subset\mathds R^N$; manifolds of this type will be called \emph{Riemannian $N$-disks}. When $\overline\Omega$ is a convex body of $\mathds R^N$,  then a classical result by Lusternik and Schnirelman \cite{LustSchn} gives the existence of at least $N$ segments contained in $\overline\Omega$ that meet $\partial\Omega$ orthogonally at both endpoints. This result has been generalized by Bos \cite{bos}, who proved the existence of at least $N$ orthogonal geodesic chords when $\overline\Omega$ is a convex domain of an arbitrary Riemannian $N$-manifold. The convexity assumption is crucial for these results, as it allows to use shortening methods.

Currently, the real challenge is to go beyond the convex case, and consider situations where two arbitrary points in $\Omega$ are not necessarily joined by a minimal geodesic in $\overline\Omega$, or, similarly, when a geodesic with endpoints in $\Omega$ may be somewhere tangent to $\partial\Omega$. We will think of this situation as if $\partial\Omega$ were an \emph{obstacle} to the geodesical connectedness of $\overline\Omega$, or to the existence of OGC's in $\overline\Omega$. \smallskip

The interest in this situation arises, mainly, from a famous old conjecture due to Seifert \cite{seifert} on the minimal number of periodic orbits (brake orbits)  of a Hamiltonian system with fixed energy  in a potential well. Using Maupertuis's principle, such periodic orbits can be described in terms of orthogonal geodesics chords of a Riemannian disk, whose boundary is \emph{strictly concave}, cf.\ \cite{GGP1}. 
The proof of this conjecture has motivated an intense literature on the subject (cf. e.g. \cite{rab}). In spite of the fact that a complete proof of the conjecture has not been reached yet, important contributions were given by several researchers, including Long's school \cite{LiuLong, LZ, LZZ, Long, Z1, Z2, Z3}, among many others (cf. also \cite{CV}).
\smallskip

The present paper aims at giving a further step towards a positive answer to Seifert's conjecture, with the proof of the existence of multiple OGC's in Riemannian disks with possibly non-convex boundary, but satisfying a mild global condition that can be described as follows. A geodesic segment $\gamma\colon\left[0,b\right]\to\overline\Omega$ is an \emph{O--T chord} (orthogonal-tangent) if $\gamma(0),\gamma(b)\in\partial\Omega$, $\dot\gamma(0)$ is orthogonal to $\partial\Omega$ and $\dot\gamma(b)$ is tangent to $\partial\Omega$. The central assumption of our main theorem is that $(\overline\Omega,g)$ does not admit any O--T geodesic. Two OGC's $\gamma_1,\gamma_2\colon[0,1]\to\overline\Omega$ will be considered \emph{distinct} when $\gamma_1\big([0,1]\big)\ne\gamma_2\big([0,1]\big)$. Our main result has the following statement.
 
\begin{teo}\label{thm:main}
Let $(\overline\Omega,g)$ be a Riemannian $N$-disk without O--T chords. Then, there are at least $N$ distinct orthogonal geodesic chords in $\overline\Omega$.
\end{teo}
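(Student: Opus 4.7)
The proof strategy is a Lusternik--Schnirelmann minimax scheme for the geodesic energy, combined with the nonsmooth critical point theory for obstacle problems developed in our previous work. The natural ambient space is the Hilbert manifold
\[
\mathcal M=\bigl\{\gamma\in H^1([0,1],\overline\Omega):\gamma(0),\gamma(1)\in\partial\Omega\bigr\},
\]
with energy $E(\gamma)=\tfrac12\int_0^1 g(\dot\gamma,\dot\gamma)\,\mathrm dt$. Critical points of $E$ on $\mathcal M$ whose image meets $\partial\Omega$ only at the endpoints are exactly the OGC's; the obstacle structure must handle curves that touch $\partial\Omega$ in the interior, and the no O--T chord hypothesis is what ultimately forces such obstructions to disappear.

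First, I would set up the topological side. Since $\overline\Omega$ is homeomorphic to $\mathds D^N$, the path space $\mathcal M$ is homotopy equivalent to its natural ``chord model'' in $\mathds D^N$. After quotienting by the orientation-reversal $\mathds Z_2$-action, so that a chord and its reverse parametrization are identified, one obtains a space $\widetilde{\mathcal M}$ whose Lusternik--Schnirelmann category is at least $N$---this is essentially the topological ingredient underlying the Lusternik--Schnirelmann/Bos multiplicity theorem in the convex case. The minimax values
\[
c_k=\inf_{A\in\mathcal A_k}\;\sup_{\gamma\in A}E(\gamma),\qquad k=1,\dots,N,
\]
over families $\mathcal A_k$ of compact subsets of $\widetilde{\mathcal M}$ of category at least $k$, are then well defined and, modulo regularity of the infimum, satisfy $0<c_1\le\cdots\le c_N<+\infty$.

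Second, one must show that each $c_k$ is realized by an OGC. Here the theory of geodesics with obstacle is essential: a direct gradient flow of $E$ does not preserve the constraint $\gamma([0,1])\subset\overline\Omega$ at points of $\partial\Omega$ where the second fundamental form is concave. Following our earlier work, I would construct a descending pseudo-gradient flow for $E$ by combining an approximation that embeds $\overline\Omega$ as an obstacle inside a larger Riemannian manifold with a reflection/penalization mechanism along $\partial\Omega$, and then prove that Palais--Smale-type sequences converge, up to extraction, to a limit curve $\gamma$ which is either an OGC, an O--T chord, a constant path, or a broken geodesic whose break-points lie on $\partial\Omega$ with matching ``exit/reentry'' tangent vectors. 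The no O--T chord hypothesis rules out the second possibility; strict positivity of the minimax levels $c_k$ rules out constants; the break-point structure, together with the matching condition, is shown to degenerate into an O--T chord as well, which is again excluded. Consequently each $c_k$ is a critical value realized by an OGC.

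The main technical obstacle is precisely this last step: constructing a flow that simultaneously (i) decreases $E$, (ii) preserves the obstacle $\overline\Omega$, and (iii) has compactness properties good enough to make failure of the Palais--Smale condition detect O--T chords, so that the O--T exclusion hypothesis can be turned into a global existence and compactness statement. Once this is achieved, standard Lusternik--Schnirelmann multiplicity produces the full $N$ distinct OGC's: if $c_k=c_{k+1}=\cdots=c_{k+\ell}$ with $\ell\ge 1$, then the critical set at that common level has category at least $\ell+1$ in $\widetilde{\mathcal M}$, and in particular contains infinitely many geometrically distinct OGC's.
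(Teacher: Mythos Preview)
Your overall strategy is the right one and matches the paper's: Lusternik--Schnirelmann minimax for the energy on the space of $H^1$-curves in $\overline\Omega$ with endpoints on $\partial\Omega$, quotiented by the reversal involution, combined with obstacle-type critical point theory to handle curves touching $\partial\Omega$ in the interior. The topological input (category $\ge N$) and the multiplicity mechanism (either $c_1<\cdots<c_N$ or infinitely many critical points at a repeated level) are exactly what the paper uses.

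Two points where your description diverges from the paper's actual argument, the first of which is a genuine inaccuracy:

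\textbf{The structure of Palais--Smale limits.} You describe the limit of a PS sequence as possibly a ``broken geodesic whose break-points lie on $\partial\Omega$ with matching exit/reentry tangent vectors'', and then argue this configuration degenerates to an O--T chord. This is not how it works, and the claimed degeneration is not obvious. The paper instead proves directly (Proposition~\ref{thm:lem5.3} and Lemma~\ref{thm:lem4.16}) that any $\mathcal V^-$-critical curve---the natural notion of critical point in the obstacle setting---has $\tfrac{\mathrm D}{\mathrm ds}\dot z\in L^\infty$, hence is of class $C^1$, with $g(\dot z,\dot z)$ constant. So the limit is never ``broken'': it is a $C^1$ curve that is a free geodesic off $\partial\Omega$ and satisfies a Lagrange-multiplier equation on the contact set. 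The no O--T hypothesis then enters in a single clean step (Proposition~\ref{prop:crit=OGC}): since $z$ starts orthogonally at $\partial\Omega$ and is $C^1$, the \emph{first} interior instant $s_0$ with $z(s_0)\in\partial\Omega$ would have $\dot z(s_0)$ tangent to $\partial\Omega$, producing an O--T chord on $[0,s_0]$. This is excluded, so $z$ stays in $\Omega$ on $(0,1)$ and is an OGC.

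\textbf{Technical realization of the flow.} The paper does not use penalization or reflection. It uses the weak slope of Degiovanni--Marzocchi directly on $\mathfrak M$, building local $\mathcal R$-equivariant descent fields $V\in\mathcal V^-_\delta(z)$ (Proposition~\ref{thm:constrcampiV-}) via an auxiliary metric making $\partial\Omega$ totally geodesic; strong $H^1$-convergence of PS sequences is obtained by the same device (Proposition~\ref{prop:convforte}). Also, the category used is the \emph{relative} category of the chord space modulo the diagonal $\mathfrak C_0$ of constant curves; this is what makes $c_1>0$ (Lemma~\ref{thm:lem10.3}) and is not mentioned in your sketch.
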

The proof of the above results  presented here employs two techniques:
\begin{itemize}
\item a non-smooth critical point theory;
\item the theory of geodesics in manifolds with obstacles.
\end{itemize}
Note that the above result could be obtained  using a geometrical approach based on a shortening method for geodesics on manifolds with boundary. The related uniqueness result of the minimizing geodesic connecting two close points was proved in \cite{sc}.
Here, instead, we use a variational approach,  considering the geodesic action functional in the set of $H^1$-curves in $\overline\Omega$ having free endpoints in $\partial\Omega$. The standard modeling over the Hilbert space $H^1\big([0,1],\mathds R^N\big)$ does not provide a differentiable structure on such a set: paths that are somewhere tangent to  $\partial\Omega$ are singular points of this set.
The notion of criticality for this variational setup follows the theory of the \emph{weak slope} (cf. \cite{CD, CDM, degmarz}), which gives us the deformation lemmas (see Section~\ref{sec:deflem}).

The theory of geodesics with obstacle (namely curves having second derivative in $L^\infty$ satisfying equation \eqref{eq:8.1bis} below) developed in the pioneering work of Marino and Scolozzi \cite{MS} for geodesics having fixed endpoints, is used in a crucial way to show the $C^1$-regularity of the critical points, while the assumption of absence of O--T chords is used to show that critical points at a positive critical level correspond indeed to OGC's in $\overline\Omega$ (Proposition~\ref{prop:crit=OGC}).%
\smallskip

Multiplicity of gedesics in a non-contractible Riemannian manifold $M$ with boundary, connecting submanifolds with boundary $M_0$ and $M_1$, was studied in \cite{scrm}. Assuming $M_0$  and $M_1$ contractible in $M$, the authors obtain the existence of infinitely many geodesics. Such a result has been improved in \cite{canino} with weaker regularity assumptions on $M$, which is assumed only to be a \emph{$p$--convex set}. In both the above papers, the authors employ minimax methods, using the theory of curves of maximal slope, (cf. e.g. \cite{DGMT,DMT}).

A multiplicity result in an annulus  (at least two orthogonal geodesic chords) has   been proved  in \cite{arma}, together with appplications to multiplicity results for brake orbits and homoclinics. Then in \cite{JDE0} examples of potentials with exactly two brake orbits and two homoclinics are given.
\smallskip

A few remarks are in order. First, let us observe that the absence of O--T chords in $\overline\Omega$ is a condition which is open w.r.t.\ the $C^1$-topology in the set of smooth Riemannian metrics in $\overline\Omega$. It is in particular satisfied when $g$ is a metric with rotational symmetry around some point $q\in\Omega$, in which case all chords that start orthogonally from $\partial\Omega$ also arrive orthogonally onto $\partial\Omega$. Thus, our theorem guarantees the existence of at least $N$ distinct OGC's when $g$ is obtained from a small $C^1$-perturbation of a rotationally symmetric metric in $\Omega$.

It should also be observed that the absence of O--T chords is \emph{not} a generic condition. It is rather intuitive that one can construct examples of metrics on the disk admitting O--T chords, and such that the existence of O--T chords persists after sufficiently small perturbations of the metric. We will give a formal proof of this fact in Appendix~\ref{app:A} using a transversality argument. Nevertheless, also the existence of multiple OGC's is not a generic property for metrics on the disk, by the same type of arguments, so that the assumption in our Theorem is perfectly compatible with the problem.

Let us also remark that our result allows to improve the multiplicity result obtained in \cite{JDE} concerning small perturbations of the radial case.

Using the relation between orthogonal geodesic chords  and brake orbits of a natural Lagrangian (or Hamiltonian) system (cf. \cite{GGP1}),  Theorem~\ref{thm:main} gives a partial answer to the above mentioned conjecture of Seifert. Let us use the following terminology: given a Riemannian manifold $(M,g)$ and a smooth function $V\colon M\to\mathds R$, we say that the triple $(M,g,V)$ is a \emph{data for the Lagrangian system}:
\begin{equation}\label{eq:LagrSyst}
\Ddt\dot x=-\nabla V(x),
\end{equation}
where $\Ddt$ denotes the covariant derivative operator along the curve $x\colon[0,T]\to M$ relative to the Levi--Civita connection of $g$. Let us recall that brake orbits for the Lagrangian system form a special class of periodic solutions of \eqref{eq:LagrSyst}. 
\begin{teo}\label{thm:applbo}
Let $(M,g_0)$ be a Riemannian $N$-manifold whose metric is rotationally symmetric around some point $q_0 \in M$. Let $V_0\colon M\to\mathds R$ be a smooth function, which is also rotationally invariant around $q_0$, and let $E_0\in\mathds R$ be a regular value of $V_0$; assume that the sublevel $V_0^{-1}\big(\left]-\infty,E_0\right]\big)$ is homeomorphic to the $N$-disk. Then,  for any $C^2$--metric $g$ and $C^2$--function $V$ sufficiently $C^1$-close to $g_0$ and $V_0$ respectively, and for all $E\in\mathds R$ sufficiently close to $E_0$, there are at least $N$ geometrically distinct brake orbits of energy $E$ for the Lagrangian system with data $(M,g,V)$.
\end{teo}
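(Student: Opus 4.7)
The plan is to deduce Theorem~\ref{thm:applbo} from Theorem~\ref{thm:main} via Maupertuis's principle. Following \cite{GGP1}, the brake orbits of energy $E$ for the Lagrangian system with data $(M,g,V)$ correspond bijectively, up to time reparametrization, to the orthogonal geodesic chords of the Jacobi metric $g_E=(E-V)\,g$ on the sublevel $\overline\Omega_E=V^{-1}\bigl(\left]-\infty,E\right]\bigr)$. Since $E$ is a regular value of $V$, the factor $E-V$ vanishes to first order on $\partial\Omega_E$, and the suitable conformal reparametrization detailed in \cite{GGP1} endows $\overline\Omega_E$ with a bona fide compact Riemannian manifold structure with smooth boundary. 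Moreover, two OGC's trace distinct subsets of $\overline\Omega_E$ if and only if the corresponding brake orbits are geometrically distinct. The strategy is therefore to verify the hypotheses of Theorem~\ref{thm:main} for $(\overline\Omega_E,g_E)$ under the perturbation assumptions in the statement.

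First, I would treat the unperturbed data $(g_0,V_0,E_0)$. Since $E_0$ is a regular value of $V_0$ and $\overline\Omega_{E_0}$ is homeomorphic to $\mathds D^N$, the pair $(\overline\Omega_{E_0},g_{E_0})$ is a Riemannian $N$-disk. Because $g_0$ and $V_0$ are both rotationally symmetric about $q_0\in\Omega_{E_0}$, so is $g_{E_0}$. As recalled in the introduction, on such a symmetric disk every $g_{E_0}$-geodesic issued orthogonally from $\partial\Omega_{E_0}$ is a radial chord through $q_0$ and arrives orthogonally at the opposite boundary point; a simple symmetry argument rules out any tangency along the way. Hence $(\overline\Omega_{E_0},g_{E_0})$ admits no O--T chord.

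Next I would use the openness of the hypotheses. For $(g,V)$ in a sufficiently small $C^2$-neighborhood of $(g_0,V_0)$ and $E$ near $E_0$, three facts hold: (a) $E$ remains a regular value of $V$ because $\nabla V_0\neq 0$ on $\partial\Omega_{E_0}$ is an open condition; (b) by the implicit function theorem and standard Morse-theoretic stability, $\overline\Omega_E$ is diffeomorphic to $\overline\Omega_{E_0}$, hence homeomorphic to $\mathds D^N$; (c) the regularized Jacobi metric $g_E$ depends continuously in the $C^1$-topology on $(g,V,E)$ near $(g_0,V_0,E_0)$. Since the absence of O--T chords is $C^1$-open, as emphasized in the remarks following Theorem~\ref{thm:main}, it persists. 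Theorem~\ref{thm:main} then yields at least $N$ distinct OGC's in $(\overline\Omega_E,g_E)$, and unravelling the Maupertuis correspondence produces at least $N$ geometrically distinct brake orbits of energy $E$ for $(M,g,V)$.

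The main technical subtlety I anticipate is the conformal degeneracy of $g_E$ on $\partial\Omega_E$: one must check that the regularization procedure of \cite{GGP1} preserves the continuous dependence on the parameters $(g,V,E)$ in the $C^1$ sense used by the openness argument, and that OGC's of the regularized metric do correspond to genuine brake orbits with the correct count. Both of these points are essentially settled in \cite{GGP1}, and together with the rotational-symmetry observation they are the only nontrivial inputs beyond Theorem~\ref{thm:main}.
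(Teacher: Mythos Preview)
Your proposal is correct and matches the paper's own approach: the paper does not give a separate formal proof of Theorem~\ref{thm:applbo}, but indicates (in the remarks preceding it) that it follows from Theorem~\ref{thm:main} via the Maupertuis/Jacobi correspondence of \cite{GGP1}, together with the observation that rotational symmetry rules out O--T chords and that this condition is $C^1$-open. Your write-up is in fact a more detailed and careful version of that same argument, including the legitimate caveat about the boundary degeneracy of the Jacobi metric, which the paper likewise defers to \cite{GGP1}.
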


Note that for  the above result we do not need  nondegeneracy assumptions. Observe also that the famous result of Bos (\cite{bos}) about multiplicity of OGC's in the convex case becomes a particular case of our Theorem \ref{thm:main}.\smallskip
 
\noindent\textbf{Acknowledgment.} The authors wish to thank Marco Degiovanni for useful discussions and suggestions concerning the weak slope theory and the study of Palais-Smale sequences.

\section{A nonsmooth functional framework}\label{sub:varframe}
It will be convenient to assume that $\overline\Omega$ is isometrically embedded into a larger open Riemannian $N$-manifold $(M,g)$. 
In this situation, $\Omega$ is open in $M$, its closure in $M$ coincides with $\overline\Omega$ and its boundary in $M$ is $\partial\Omega$.
Using the  Whitney Embebbing Theorem (cf \cite{wh}),  we can assume
that $M$ is an embedded submanifold of $\mathds{R}^{2N}$.  This will provide a useful linear structure to our framework. Thus, one has inclusions:
\[\overline\Omega\subset M\subset\mathds R^{2N}.\]

\subsection{Geometry of $\pmb{\overline\Omega}$}\label{sub:geoOmega}
Let us fix a $C^2$--function $\phi\colon M\to\mathds R$ with the
property that $\Omega=\phi^{-1}\big(\left]-\infty,0\right[\big)$
and $\partial\Omega=\phi^{-1}(0)$, with $\nabla\phi\ne0$ on
$\partial\Omega$. One can choose $\phi$ such that, in a neighborhood of $\partial \Omega$
\begin{equation}\label{eq:functionphi}
\phi(x)=\begin{cases}
-\dist(q,\partial\Omega),& \text{ if }x \in \overline \Omega;\\
\dist(q,\partial\Omega),& \text{ if }x \notin \overline \Omega.
\end{cases}
\end{equation}
Let us also fix $\delta_0 > 0$ small enough so that: 
\begin{equation}\label{eq:delta0}
\nabla \phi(p) \not=0 \text { for any }p \in \phi^{-1}\big([-\delta_0,\delta_0]\big);
\end{equation}
thus, the set $\phi^{-1}\big([-\delta_0,\delta_0]\big)$ is a tubular neighborhood of $\partial\Omega$, and there exists a continuous retraction:
\begin{equation}\label{eq:retraction}
\mathbf r\colon \phi^{-1}\big([-\delta_0,\delta_0]\big)\longrightarrow\partial\Omega
\end{equation}
defined in terms of the flow of $\nabla\phi$. The normalized gradient of $\phi$:
\begin{equation}\label{eq:Y}
\nu(p)=\frac{\nabla\phi(p)}{\big\Vert\nabla\phi(p)\big\Vert},\qquad p\in\partial\Omega,
\end{equation}
gives a unit normal vector field along $\partial\Omega$ which points outwards. It will be useful to consider a fixed $C^1$-extension of $\nu$ to $M$. 
The notions of convexity, or concavity, for $\overline\Omega$ can be given in terms of the second fundamental form of $\partial\Omega$ or, equivalently, in terms of the Hessian $\mathrm H^\phi$ of the function $\phi$: $\partial\Omega$ is convex (resp., concave) when for all $p\in\partial\Omega$, $\mathrm H^\phi$ is positive semidefinite (resp., negative semidefinite) on $T_p\partial\Omega$.

Let us also define the positive constant $K_0$ as follows:
\begin{equation}\label{eq:ko}
K_0=\max_{x \in \overline \Omega}\Vert \nabla \phi(x)\Vert.
\end{equation}
\subsection{Hilbert spaces and manifolds}
For $[a,b]\subset\mathds R$, let us consider the Sobolev spaces $H^1\big([a,b],\mathds R^{2N}\big)$ and $H^1_0\big([a,b],\mathds R^{2N}\big)=\big\{V\in H^1\big([a,b],\mathds R^{2N}\big):V(a)=V(b)=0\big\}$. For $S\subseteq M$, set:
\begin{equation}\label{eq:defH1abA}
H^1\big([a,b],S\big)=\big\{x\in H^1\big([a,b],\mathds R^{2N}\big):x(s)\in S\ \text{for all}\ s\in [a,b]\big\}.
\end{equation}
In particular, $H^1\big([a,b],M\big)$ is a smooth Hilbert submanifold of $H^1\big([a,b],\mathds R^{2N})$ modeled on the Hilbert space $H^1\big([a,b],\mathds R^N\big)$. For $x\in H^1\big([a,b],M\big)$, the tangent space $T_xH^1\big([a,b],M\big)$ is identified with the Hilbert subspace of $H^1\big([a,b],\mathds R^{2N}\big)$ given by:
\[T_xH^1\big([a,b],M\big)=\Big\{\xi\in H^1\big([a,b],\mathds R^{2N}\big):\xi(s)\in T_{x(s)}M\quad\text{for all $s\in[a,b]$}\Big\}.\]
We will define some distance functions on $H^1\big([0,1],M\big)$ by setting:    
\begin{multline}\label{eq:distanza}
\dist_{*}(x_1,x_2)= \max\big\{\Vert x_2(0)-x_1(0) \Vert_{2N}, \Vert x_2(1)-x_1(1)\Vert_{2N}\big\}+\\
\Big(\int_0^1 \Vert \dot x_2(s)- \dot x_1(s) \Vert_{2N}^2\,\mathrm ds\Big)^{\frac12},
\end{multline}
and
\begin{equation}\label{eq:distuniforme}
\dist_{\infty}(x_1,x_2)=\sup_{s \in [0,1]}\Vert x_2(s)-x_1(s)\Vert_{2N}.
\end{equation}
Here, $\Vert\cdot\Vert_{2N}$ is the Euclidean norm in $\mathds R^{2N}$. Note that,  
\begin{equation}\label{eq:imm}
\phantom{,\qquad \forall\,x_1,x_2 }\dist_{\infty}(x_1,x_2) \leq \dist_{*}(x_1,x_2),\qquad \forall\,x_1,x_2 \in H^1\big([0,1],M\big).
\end{equation}

Finally,  we consider the norm $\Vert\cdot\Vert_*$ in $H^1\big([0,1],\mathds R^{2N}\big)$ defined by:
\begin{equation}\label{eq:normaV}
\Vert V \Vert_{*} = \max\big\{\Vert V(0)\Vert,\Vert V(1)\Vert\big\}+
\Big(\int_a^b \big\Vert  V' \big\Vert^2\,\mathrm ds\Big)^{\frac12}.
\end{equation}
Observe that all the norms and distances defined here are invariant by backward reparameterizations of curves, see Section~\ref{sub:backwards}.
\subsection{Functional space and the energy functional}
The main functional space considered for our variational problem is: 
\begin{equation}\label{eq:defM}
\mathfrak M=\Big\{x\in
H^1\big([0,1],\overline\Omega\big):
x(0),x(1)\in\partial\Omega\Big\},
\end{equation}
endowed with the usual energy functional
\begin{equation}\label{eq:enfunct}
\mathcal F(x)=\int_0^1g(\dot x, \dot x)\,\mathrm ds.
\end{equation}
This is a smooth functional on the Hilbert manifold $H^1\big([0,1],M\big)$, and its restriction to $\mathfrak M$ is clearly continuous. For $x\in H^1\big([0,1],M\big)$ and $V\in T_xH^1\big([0,1],M\big)$, the derivative $\mathrm d\mathcal F(x)V$ is given by the integral formula:
\begin{equation}\label{eq:dermathcalF}
\mathrm d\mathcal F(x)V=2\int_0^1g\big(\dot x,\Dds V\big)\,\mathrm ds,\end{equation}
where $\Dds$ denotes the covariant derivative along $x$ relatively to the Levi--Civita connection of $g$.
\begin{lem}\label{thm:lem2.3}
Let $x\in\mathfrak M$ and $[a,b]\subset[0,1]$ be such that
$x(a)\in\partial\Omega$. Then
\begin{equation}\label{eq:aggiuntalemma2.1}
\max_{s\in[a,b]}\big|\phi(x(s)\big|\leq K_0(b-a)^\frac12 \Big[\int_a^bg(\dot x,\dot x) \,\mathrm d\sigma\Big]^{\frac12}.
\end{equation}
\end{lem}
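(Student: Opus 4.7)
The plan is to exploit the fact that $\phi(x(a))=0$ (since $x(a)\in\partial\Omega$) and then estimate $\phi(x(s))$ by integrating $\frac{\mathrm d}{\mathrm d\sigma}\phi(x(\sigma))$ along $x$, applying the definition of $K_0$ from \eqref{eq:ko} together with the Cauchy--Schwarz inequality.

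More precisely, first I would write, for any $s\in[a,b]$,
\[
\phi\big(x(s)\big)=\phi\big(x(s)\big)-\phi\big(x(a)\big)=\int_a^s g\big(\nabla\phi(x(\sigma)),\dot x(\sigma)\big)\,\mathrm d\sigma,
\]
using that $x\in H^1$ so the chain rule holds a.e.\ and $\tfrac{\mathrm d}{\mathrm d\sigma}\phi(x(\sigma))=g(\nabla\phi(x(\sigma)),\dot x(\sigma))$ almost everywhere. Then by the Cauchy--Schwarz inequality (pointwise, with respect to $g$) and \eqref{eq:ko},
\[
\big|\phi(x(s))\big|\leq\int_a^s\big\Vert\nabla\phi(x(\sigma))\big\Vert\cdot\Vert\dot x(\sigma)\Vert\,\mathrm d\sigma\leq K_0\int_a^b\Vert\dot x(\sigma)\Vert\,\mathrm d\sigma.
\]

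Finally I would apply Cauchy--Schwarz (H\"older) in $L^2([a,b])$ to the factor $\Vert\dot x\Vert\cdot 1$:
\[
\int_a^b\Vert\dot x(\sigma)\Vert\,\mathrm d\sigma\leq(b-a)^{\frac12}\Big(\int_a^b g(\dot x,\dot x)\,\mathrm d\sigma\Big)^{\frac12},
\]
obtaining \eqref{eq:aggiuntalemma2.1} after taking the maximum in $s\in[a,b]$.

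There is no real obstacle here: the argument is a two-step Cauchy--Schwarz estimate combined with the fundamental theorem of calculus for $H^1$ curves. The only minor point to check is that $x$ takes values in $\overline\Omega$ so that $\nabla\phi$ is evaluated on $\overline\Omega$, which is precisely the set over which the max in the definition of $K_0$ is taken; this is ensured by $x\in\mathfrak M$.
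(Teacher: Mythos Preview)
Your proof is correct and follows essentially the same approach as the paper: write $\phi(x(s))-\phi(x(a))$ as an integral, bound $\Vert\nabla\phi\Vert$ by $K_0$, and apply Cauchy--Schwarz. The only cosmetic difference is that the paper keeps the integration over $[a,s]$ and obtains the intermediate factor $\sqrt{s-a}$ before bounding by $\sqrt{b-a}$, whereas you enlarge the domain to $[a,b]$ first; the resulting estimate is identical.
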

\begin{proof}
Since $\phi(x(a))=0$ and $\Vert \nabla \phi(x) \Vert \leq K_0$ for any $x \in \overline\Omega$, for all $s\in [a,b]$ we have 
\begin{multline*}
\big\vert \phi(x(s))\big\vert = \big\vert\phi(x(s))-\phi(x(a))\big\vert\le\int_a^{s}\big\vert g\big(\nabla\phi(x(\sigma)),\dot x(\sigma)\big)\big\vert\,
\mathrm d\sigma \\
\le K_0\int_a^sg(\dot x,\dot x)^{\frac12}  \mathrm
d\sigma
\le K_0\sqrt{s-a} \left(  \int_a^bg(\dot x,\dot x) \,\mathrm
d\sigma\right)^{\frac12},
\end{multline*}
from which  \eqref{eq:aggiuntalemma2.1} follows.
\end{proof}

\begin{cor}\label{smallstrip}
Let $x \in \mathfrak M$ and $[a,b]\subset[0,1]$ be such that $x(a),x(b)\in\partial\Omega$ and 
\[
\int_a^bg(\dot x, \dot x)\,\mathrm d\sigma \leq \frac{\delta_0^2}{K_0^2}.
\] 
Then, $\phi\big(x(s)\big)\geq -\delta_0$ for all $s \in [a,b]$.\qed
\end{cor}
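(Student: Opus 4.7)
The plan is to obtain the corollary as an essentially immediate consequence of Lemma~\ref{thm:lem2.3}. Since $x(a)\in\partial\Omega$ and $[a,b]\subset[0,1]$, the hypotheses of the lemma are met, so I would apply \eqref{eq:aggiuntalemma2.1} directly on the interval $[a,b]$. Substituting the assumed bound $\int_a^b g(\dot x,\dot x)\,\mathrm d\sigma\le\delta_0^2/K_0^2$ into the right-hand side yields
\[
\max_{s\in[a,b]}\bigl|\phi(x(s))\bigr|\;\le\;K_0\,(b-a)^{1/2}\cdot\frac{\delta_0}{K_0}\;=\;\delta_0\,(b-a)^{1/2}.
\]

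Next, I would use the trivial bound $b-a\le 1$ (since $[a,b]\subset[0,1]$) to conclude $\max_{s\in[a,b]}|\phi(x(s))|\le\delta_0$. To convert this into a one-sided inequality, I invoke the membership $x\in\mathfrak M$, which by \eqref{eq:defM} forces $x(s)\in\overline\Omega$ for all $s$, and hence $\phi(x(s))\le 0$ by the choice of $\phi$ in Section~\ref{sub:geoOmega}. Thus $|\phi(x(s))|=-\phi(x(s))$, and the previous inequality becomes exactly $\phi(x(s))\ge-\delta_0$ for all $s\in[a,b]$, which is the desired conclusion.

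There is no real obstacle here: the hypothesis on the energy integral has been tailored precisely so that the bound from Lemma~\ref{thm:lem2.3} undercuts $\delta_0$. Note that the assumption $x(b)\in\partial\Omega$ plays no role in the argument; it is presumably included because this is how the corollary will be applied later, namely to subintervals on which the curve $x$ re-enters $\partial\Omega$.
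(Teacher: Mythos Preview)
Your proof is correct and follows exactly the intended route: the paper itself gives no proof of this corollary (note the \qed\ in its statement), treating it as an immediate consequence of Lemma~\ref{thm:lem2.3} via the estimate you wrote down. Your observation that the hypothesis $x(b)\in\partial\Omega$ is not actually used in the argument is also accurate.
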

\subsection{Backward reparameterization}\label{sub:backwards}
Consider now the map $\Rcal\colon \mathfrak M \to \mathfrak M$:
\begin{equation}\label{eq:2.4}
\Rcal x(t)=x(1-t).
\end{equation}
We say that $\Ncal\subset \mathfrak M$ is
\emph{$\Rcal$--invariant} if $\Rcal(\Ncal)=\Ncal$; note that
$\mathfrak M$ is $\Rcal$-invariant. If $\Ncal$ is $\mathcal
R$-invariant, a homotopy $h\colon [0,1]\times\Ncal\to\mathfrak M$ is
called \emph{$\Rcal$--equivariant} if
\begin{equation}\label{eq:2.5}
h(\tau,\Rcal x)=\Rcal h(\tau,x),\quad\forall x\in\Ncal,\,\forall\tau\in[0,1].
\end{equation}

\begin{rem}\label{rem:noROGC}
Note that if $\gamma\colon[0,1]\to\overline\Omega$ is an OGC, then 
$\gamma$ and $\Rcal\gamma$ are not distinct OGC's according to our definition, however,
$\mathcal R\gamma\not= \gamma$ as elements of $\mathfrak M$. Indeed if by contradiction $\mathcal R\gamma = \gamma$, i.e., $\gamma(1-t)=\gamma(t)$ for all $t$, and from this it would follow $\dot \gamma(\frac12)=0$. Since $g(\dot\gamma,\dot\gamma)$ is constant,  we would then  have that $\gamma$ is constant.
\end{rem}
\subsection{The set of chords of $\overline\Omega$}
The following Lemma allows to describe an $\Rcal$--invariant subset $\mathfrak C$ of ${\mathfrak M}$ which is homeomorphic to the product $\mathbb S^{N-1}\times\mathbb S^{N-1}$. This is interpreted as the set of \emph{chords} in $\overline\Omega$.
\begin{lem}\label{thm:corde} There exists  a continuous map $\gamma\colon \partial\Omega\times\partial\Omega\to H^1\big([0,1],\overline\Omega\big)$ such that
\begin{enumerate}
\item\label{corde1} $\gamma(A,B)(0)=A,\,\,\gamma(A,B)(1)=B$.
\item\label{corde2} $A\not=B\,\Rightarrow\, \gamma(A,B)(s)\in\Omega\,\forall s\in ]0,1[$.
\item\label{corde3} $\gamma(A,A)(s)=A\,\forall s\in [0,1]$.
\item\label{corde4} $\Rcal \gamma(A,B)=\gamma(B,A)$, namely $\gamma(A,B)(1-s)=\gamma(B,A)(s)$ for all $s$, and for all $A,B$.
\end{enumerate}
\end{lem}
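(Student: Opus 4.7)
My plan is to transport the obvious straight-line construction on the model disk $\mathds D^N$ to $\overline\Omega$, passing through a chosen central point $q_0\in\Omega$. Fix a homeomorphism $\psi\colon\overline\Omega\to\mathds D^N$ and set $q_0=\psi^{-1}(0)$. In the model, the chord $\tilde\gamma(a,b)(s)=(1-s)a+sb$ joining $a,b\in\mathds S^{N-1}$ obviously satisfies the model analogues of \eqref{corde1}, \eqref{corde3}, \eqref{corde4}. The identity
\[
  \bigl\Vert (1-s)a+sb\bigr\Vert^{2}=1-2s(1-s)\bigl(1-\langle a,b\rangle\bigr)
\]
shows, for the model analogue of \eqref{corde2}, that $\tilde\gamma(a,b)(s)$ lies strictly inside $\mathds D^N$ for $s\in\,]0,1[$ precisely when $a\ne b$, and the map $(a,b)\mapsto\tilde\gamma(a,b)$ is manifestly continuous into $H^1\big([0,1],\mathds D^N\big)$.

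The nontrivial step is to realize this picture in $\overline\Omega$ with $H^1$-valued continuity, given that $\psi$ is a priori only a homeomorphism. To achieve this I would build, for each $A\in\partial\Omega$, an $H^1$-path $\alpha_A\colon[0,1]\to\overline\Omega$ joining $A$ to $q_0$, lying in $\Omega$ for $s>0$, and depending continuously on $A$ in the $H^1$-topology. On a small initial subinterval, take $\alpha_A$ to be the integral curve of the inward vector field $-\nabla\phi/\Vert\nabla\phi\Vert$ starting at $A$ (smooth in $A$ by the flow theorem, and immediately inside $\Omega$). On the remaining interval, finely subdivide the topological radial path $\tau\mapsto\psi^{-1}\bigl(\tau\,\psi(A)\bigr)$ into short arcs and replace each by the unique short minimizing $g$-geodesic of $M$ joining its endpoints: by uniform continuity of $\psi^{-1}$, the mesh can be chosen uniformly in $A$, and since the sample points lie uniformly away from $\partial\Omega$ (by compactness of $\partial\Omega\times[\varepsilon,1]$ and the fact that $\phi\circ\psi^{-1}$ is strictly negative there), these short geodesics actually stay in $\Omega$ and depend continuously on their endpoints.

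Finally, with the family $\alpha_A$ in hand, set
\[
 \gamma(A,B)(s)=
 \begin{cases}
 \alpha_A\bigl(2s\,\tau(A,B)\bigr), & s\in[0,\tfrac12],\\[2pt]
 \alpha_B\bigl(2(1-s)\,\tau(A,B)\bigr), & s\in[\tfrac12,1],
 \end{cases}
\]
where $\tau\colon\partial\Omega\times\partial\Omega\to[0,1]$ is any continuous symmetric function vanishing exactly on the diagonal (for instance a suitable truncation of $\Vert A-B\Vert_{2N}$). Properties \eqref{corde1}, \eqref{corde3} and \eqref{corde4} are immediate from the formula; \eqref{corde2} follows because $\alpha_A(t)\in\Omega$ for every $t>0$ and $\tau(A,B)>0$ whenever $A\ne B$. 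The chief technical obstacle I anticipate is the $H^1$-continuous construction of the family $\alpha_A$ from the merely continuous disk homeomorphism $\psi$; once this is achieved the concatenation step is routine.
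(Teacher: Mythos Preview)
Your concatenation formula does not produce a continuous curve. At $s=\tfrac12$ the two branches give $\alpha_A\bigl(\tau(A,B)\bigr)$ and $\alpha_B\bigl(\tau(A,B)\bigr)$, and these coincide only when $\tau(A,B)\in\{0,1\}$ (i.e.\ when $A=B$, or when both paths have reached the common center $q_0$). For any intermediate value of $\tau$ the curve has a jump at $s=\tfrac12$, so $\gamma(A,B)$ is not even in $C^0\big([0,1],\overline\Omega\big)$, let alone $H^1$. Forcing $\tau\equiv1$ off the diagonal repairs the matching at $s=\tfrac12$ but destroys continuity of $(A,B)\mapsto\gamma(A,B)$ at the diagonal: as $B\to A$ the curve converges to the nonconstant loop $A\to q_0\to A$, not to the constant curve required by \eqref{corde3}. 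The scaling trick with $\tau$ was presumably meant to interpolate between these regimes, but it cannot do both jobs at once; what is missing is a genuine connecting arc between $\alpha_A(\tau)$ and $\alpha_B(\tau)$ that shrinks as $B\to A$.

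The paper avoids this by pulling back the \emph{entire} Euclidean chord $s\mapsto(1-s)\Psi(A)+s\Psi(B)$ through the homeomorphism $\Psi^{-1}$ (so no matching issue arises), and then invoking a broken-geodesic approximation to upgrade the a priori only continuous curves to $H^1$ with continuous dependence on $(A,B)$. Your construction of the family $\alpha_A$ via the inward flow followed by broken geodesics is exactly the kind of argument needed for that regularization step; you have the right ingredients, but they should be applied to the pulled-back chord itself rather than to two separate radial paths that are then glued.
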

\begin{proof}
Let $\Psi\colon  \overline \Omega  \rightarrow {\mathds D}^N$ be a  homeomorphism.  Define
\[
\hat \gamma(A,B)(s) = \Psi^{-1}\big((1-s)\Psi(A)+s\Psi(B)\big),\,A,B\in \overline\Omega.
\]
In general, if $\overline\Omega$ is only homeomorphic to the
disk $\mathds D^N$, the above definition produces curves
that in principle are only continuous. 
In order to produce curves
with an $H^1$-regularity, it suffices to use a broken geodesic approximation argument.
\end{proof}

After choosing a function $\gamma\colon \partial\Omega\times\partial\Omega\to H^1\big([0,1],\overline\Omega\big)$ as in Lemma~\ref{thm:corde}, we set:
\begin{equation}\label{eq:CC0}
\begin{aligned}
&{\mathfrak C}=\big\{\gamma(A,B)\,:\,A,B\in\partial\Omega\big\},\\
&{\mathfrak C_0}=\big\{\gamma(A,A)\,:\,A\in\partial\Omega\big\}.\end{aligned}
\end{equation}

\begin{rem}\label{rem:conseguenze-homeo}
Note  that the map $\partial\Omega\times\partial\Omega\ni(A,B)\mapsto\gamma(A,B)\in\mathfrak C$ is a homeomorphism.\end{rem}
Using the continuity of $\mathcal F$ and the compactness of $\mathfrak C$, let us define $M_0 \in \mathds{R}^+$ by:
\begin{equation}\label{eq:defM0}
M_0^2 = \sup_{x\in {\mathfrak C}}\int_0^1 g(\dot x,\dot x)\,\mathrm dt.
\end{equation}
\begin{rem}
It is interesting to observe that the constants $\delta_0$ in \eqref{eq:delta0}, $K_0$ in \eqref{eq:ko} and $M_0$ in \eqref{eq:defM0} are related by the following inequality:
\begin{equation}\label{eq:ineqM0}
M_0 > \dfrac{\delta_0}{K_0}.
\end{equation}
Indeed assume by contradiction $M_0\le\frac{\delta_0}{K_0}$.
By Corollary~\ref{smallstrip}, all the curves of $\mathfrak C$ must then lie in the tubular neighborhood $\phi^{-1}\big([-\delta_0,\delta_0]\big)$ of $\partial\Omega$. Using the retraction $\mathbf r$ defined in \eqref{eq:retraction}, one could then obtain a retraction $\mathbf r_0$ of $\partial\Omega\times\partial\Omega$ onto its diagonal, by  defining $\mathbf r_0(A,B)$ as the midpoint of the path $\mathbf r\circ\gamma(A,B)$. Of course, such a retraction cannot exist, which leads to a contradiction and proves \eqref{eq:ineqM0}.
\end{rem}

\section{On the $\mathcal V^-$-critical curves of the energy functional $\mathcal F$ on $\mathfrak{M}$}\label{sec:critical curves}
Given $x\in\mathfrak M$, let us define the notion of \emph{admissible infinitesimal variation of $x$ in $\mathfrak M$} as a vector field $V\in T_xH^1\big([0,1],M\big)$ satisfying: 
\begin{equation}\label{eq:Vbordo}
g\big(\nabla \phi(x(0)),V(0)\big)=g\big(\nabla \phi(x(1)),V(1)\big)=0,
\end{equation}
and
\begin{equation}\label{eq:Venter}
g\big(\nabla \phi(x(s)),V(s)\big)\leq 0\ \text{for any}\ s\in\left]0,1\right[\ \text{such that}\ x(s)\in\partial\Omega.
\end{equation}
Since $\nabla\phi(p)$ points outside of $\Omega$ for $p\in\partial\Omega$, then condition \eqref{eq:Venter} says that 
$V(s)$ must not point outside  $\Omega$ when $x(s)\in\partial\Omega$.
The set of admissible infinitesimal variations of $x$ in $\mathfrak M$ will be denoted by $\mathcal V^-(x)$.
\smallskip

In view of the fact that $\mathfrak M$ is not smooth, we need a suitable definition of critical points for the functional $\mathcal F\colon\mathfrak M\to\mathds R$. Following the \emph{weak slope theory}
developed in \cite{CD,degmarz}, and
recalling \eqref{eq:dermathcalF}, we give the following.
\begin{defin}\label{def:criticalcurve}
We say that $x$ is a $\mathcal V^-$-critical curve for $\mathcal F$ on $\mathfrak M$ if
\begin{equation}\label{defcc}
\int_0^1g\big(\dot x,\Dds V\big)\,\mathrm ds\geq 0,\quad \text{for all}\ V \in {\mathcal V}^-(x).
\end{equation}
\end{defin}
Note that the set $\mathcal F^{-1}(0)$ consists entirely of minimum points (the constant curves in $\partial \Omega$) which are obviously $\mathcal V^-$-critical curves. In order to describe the $\mathcal V^-$-critical curves of $\mathcal F$ corresponding to positive critical levels, we start  with the following regularity result proved in \cite{MS} in an $L^2$ setting. Here we give a simpler proof in an $H^{1}$ setting. 
\begin{prop}\label{thm:lem5.3} 
Let $z\in \mathfrak M$ be a $\mathcal V^-$-critical curve for $\mathcal F$.
Then $\Dds\dot z\in L^{\infty}\big([0,1],\mathds R^{2N}\big)$ and, in particular, $z$ is of class $C^1$. 
\end{prop}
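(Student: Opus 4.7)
The plan is to exploit \eqref{defcc} with two specific classes of admissible variations to derive that $z$ satisfies an obstacle-type Euler--Lagrange equation
\begin{equation*}
\Dds\dot z = \lambda\,\nabla\phi(z)\ \text{a.e. on}\ [0,1],\qquad \lambda\ge 0,\qquad \lambda=0\ \text{on}\ \{\phi(z)<0\},
\end{equation*}
and then to upgrade $\lambda$ from a nonnegative Radon measure to an $L^\infty$ function. Once this is done, $\Dds\dot z\in L^\infty$ follows at once, whence $\dot z$ is Lipschitz and $z\in C^1$.

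First, on the open set $U:=\{s\in[0,1]:\phi(z(s))<0\}$, every compactly supported variation $V$ lies in $\mathcal V^-(z)$ together with $-V$, so \eqref{defcc} becomes an equality and $z$ is a classical geodesic on each connected component of $U$; in particular $\Dds\dot z=0$ and $|\dot z|_g$ is locally constant there. To reach the contact set $H:=\{s:z(s)\in\partial\Omega\}$, I test \eqref{defcc} against two families. For tangential variations $V$ with $g(\nabla\phi(z),V)\equiv 0$, both $\pm V$ are admissible, so \eqref{defcc} becomes an equality and forces $\Dds\dot z$, as an $H^{-1}$ distribution, to be parallel to $\nabla\phi(z)$. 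For normal variations $V=\alpha\,\nabla\phi(z)$ with $\alpha\in H^1_0(0,1)$, admissibility reduces to the one-sided constraint $\alpha\le 0$ on $H$, and the identity $g(\dot z,\Dds V)=\alpha'\psi'+\alpha\,\mathrm H^\phi(\dot z,\dot z)$, with $\psi:=\phi\circ z$, turns \eqref{defcc} into the obstacle variational inequality
\begin{equation*}
\int_0^1\bigl[\alpha'\psi'+\alpha\,\mathrm H^\phi(\dot z,\dot z)\bigr]\,\mathrm ds\ge 0.
\end{equation*}
From this, $\mu:=\psi''-\mathrm H^\phi(\dot z,\dot z)$ is a nonnegative Radon measure on $(0,1)$ supported on $H$, and comparison with the tangential equation identifies the multiplier as $\lambda=\mu/\|\nabla\phi(z)\|^2\ge 0$.

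The main obstacle is upgrading $\lambda$ from a measure to an $L^\infty$ function, and the crucial preliminary step is to prove $\dot z\in L^\infty([0,1])$, so that $\mathrm H^\phi(\dot z,\dot z)\in L^\infty$. This is achieved by combining the local constancy of $|\dot z|_g$ on each component of $U$ with a continuity property at contact points: testing \eqref{defcc} with vector fields of the form $V=\eta(s)\,P_{s,s_0}(\xi)$---a triangular bump $\eta$ of height one at $s_0\in H$ times the parallel transport $P_{s,s_0}$ of a fixed $\xi\in T_{z(s_0)}M$---one shows that the one-sided limits of $\dot z$ at $s_0$ can differ only by a nonnegative multiple of $\nabla\phi(z(s_0))$, while the fact that $\psi\le 0$ attains a local maximum at $s_0$ forces this multiple to vanish. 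Once $\mathrm H^\phi(\dot z,\dot z)\in L^\infty$, the standard regularity theory for one-dimensional obstacle problems yields $\psi''\in L^\infty$ (since $\psi''=0$ a.e.\ on the interior of $H$, $\psi''=\mathrm H^\phi(\dot z,\dot z)$ on $U$, and continuity of $\psi'$ rules out Dirac contributions on $\partial H$), whence $\lambda\in L^\infty$ and $\Dds\dot z\in L^\infty$, completing the proof.
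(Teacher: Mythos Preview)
Your obstacle-problem strategy is a legitimate alternative to the paper's argument, but two of the steps are not correctly justified as written.

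The bump-function argument for $\dot z\in L^\infty$ does not work. The test field $V=\eta\,P_{s,s_0}(\xi)$ need not lie in $\mathcal V^-(z)$, since parallel transport does not preserve tangency to $\partial\Omega$ and \eqref{eq:Venter} may fail at contact points near $s_0$; and even if you established that $\dot z$ has no jump at each \emph{isolated} contact point, this gives no uniform bound on the constants $|\dot z|_g$ across the possibly infinitely many components of $U$, nor any control at accumulation points of $\partial H$ or on the interior of $H$. In fact the argument is unnecessary: you have already identified $\Dds\dot z=\lambda\,\nabla\phi(z)$ with $\lambda\ge 0$ a Radon measure, so $\Dds\dot z$ is itself a finite vector measure and $\dot z\in BV\subset L^\infty$ follows immediately. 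Likewise, your justification that ``continuity of $\psi'$ rules out Dirac contributions on $\partial H$'' is circular, since continuity of $\psi'$ is part of what is being proved; the conclusion $\psi''\in L^\infty$ is correct once $\mathrm H^\phi(\dot z,\dot z)\in L^\infty$, but it requires an appeal to the $W^{2,\infty}$ regularity for the one-dimensional obstacle problem with bounded right-hand side, not the argument you gave.

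The paper's proof is more direct and avoids both issues. For arbitrary $\xi\in H^1_0$ one writes $\xi=\zeta+\eta$ with $\eta=g(\xi,\nu(z))^+\nu(z)$; then $\zeta\in\mathcal V^-(z)$ automatically. Applying \eqref{defcc} to $\zeta$, using that $z$ is geodesic on each component of $I_z$ to make that contribution of the $\eta$-term nonnegative, and using $g(\nabla\phi(z),\dot z)=0$ a.e.\ on $C_z$, one arrives at
\[
\int_0^1 g(\dot z,\Dds\xi)\,\mathrm ds\ \ge\ \int_{C_z} g\big(\dot z,\Dds\nu\big)\,g(\xi,\nu)^+\,\mathrm ds.
\]
Applied to $\pm\xi$ with only $\dot z\in L^2$, this gives $\bigl|\int_0^1 g(\dot z,\Dds\xi)\,\mathrm ds\bigr|\le L\|\xi\|_{L^\infty}$, hence $\dot z\in L^\infty$; a second pass with $\dot z\in L^\infty$ upgrades the bound to $M\|\xi\|_{L^1}$, whence $\Dds\dot z\in L^\infty$. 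This two-step duality bootstrap replaces the obstacle-regularity machinery entirely.
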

 
\begin{proof}
Let $\nu$ be a $C^1$-vector field in $M$ as in \eqref{eq:Y}.
Given $\tau\in\mathds R$, we will write $\tau^+$ for $\max\{0,\tau\}$.
For an arbitrary $\xi\in H^1_0\big([0,1],\mathds R^{2N}\big)$ vector field along $z$, set $\zeta=\xi-\eta$, where
\begin{equation}\label{eq:eta1}
\eta(s)=g\big(\xi(s),\nu(z(s))\big)^+\cdot \nu(z(s)).
\end{equation}
Observe that $\zeta \in H^1_0\big([0,1],\mathds{R}^{2N}\big)$ and it satisfies \eqref{eq:Venter}, and therefore belongs to the set ${\mathcal V}^-(x)$ of admissible infinitesimal variations of $x$.

Let
us set 
\begin{equation}\label{eq:defCz}
C_z=\big\{s\in[0,1]\,:\,\phi(z(s))=0\big\},
\end{equation} and
$I_z=[0,1]\setminus C_z$. Since $I_z$ is open, it is a countable union of pairwise disjoint open intervals:
\[
I_z=\bigcup_{i\in J} \left]a_i,b_i\right[,
\]
where $J\subset\mathds N$. Then
\begin{equation}\label{eq:7}
\int_0^1
g\big(\dot z,\Ddt\eta\big)\,\mathrm ds=\int_{C_z}g\big(\dot z,\Ddt \eta\big)\,  \mathrm ds
+\sum_{i\in J}\int_{a_i}^{b_i}g\big(\dot z,\Ddt \eta\big)\, \mathrm ds.
\end{equation}
Let us consider an arbitrary vector field $V$ of class $H^1_0$ along
$z$ such that $V\vert_{C_z}\equiv 0$. Since both $V$ and $-V$ satisfy \eqref{eq:Vbordo} and, trivially, also \eqref{eq:Venter}, we get
\[
\int_{a_i}^{b_i}
g\big(\dot z,\Ddt \eta\big)\,  \mathrm ds=0,\quad\forall i\in J.
\]
Since $V$ is arbitrary outside $C_z$, $z_{|[a_i,b_i]}$ is a geodesic for all $i$. Partial integration gives
\begin{equation}\label{eq:5}
\sum_{i\in J}\int_{a_i}^{b_i}
g\big(\dot z,\Ddt \eta\big)\, \mathrm ds
=
\sum_{i\in J}
g\big(\dot z(b_i),\eta(b_i)\big)-g\big(\dot z(a_i),\eta(a_i)\big).
\end{equation}
But \eqref{eq:eta1} ensures that, for all $s\in C_z$,
$\eta(s)=\lambda(s)\nabla\phi\big(z(s)\big)$ with $\lambda(s)\ge 0$, while 
\[
g\big(\dot z(b_i),\nabla\phi(z(b_i)\big) \ge 0,\text{ and }
g\big(\dot z(a_i),\nabla\phi(z(a_i)\big)\le 0 \quad\forall i\in J,
\]
because
\[
\phi\big(z(s)\big) \leq 0 \text{ for all }s \in [a,b].
\] 
Then, by \eqref{eq:5},  
\begin{equation}\label{eq:6}
\sum_{i\in J}\int_{a_i}^{b_i}
g\big(\dot z,\Ddt \eta\big)\, \mathrm ds\ge 0.
\end{equation}
Since $\zeta=\xi-\eta$ satisfies 
\eqref{eq:Vbordo} and \eqref{eq:Venter} we also have 
\begin{multline}\label{eq:10a}
0\le\int_0^1
g\big(\dot z,\Ddt \xi\big)\,\mathrm ds 
-\int_{C_z}g\big(\dot z,\Ddt \eta\big)\,\mathrm  ds
-\sum_{i\in J}\int_{a_i}^{b_i}
g\big(\dot z,\Ddt \eta\big)\,\mathrm ds \\ \le
\int_0^1  g\big(\dot z,\Ddt \xi\big)\,\mathrm ds 
-\int_{C_z}  g\big(\dot z,\Ddt \eta\big)\,\mathrm ds.
\end{multline}
Since $\phi\circ z$ is of class $H^1$, and it is constant on $C_z$, we have  $g\big(\dot z,\nabla\phi(z)\big)= 0
$ a.e. on $C_z$ (cf. \cite{GT}). Then  
\[
\int_{C_z}
g\big(\dot z,\Ddt \eta\big)\,\mathrm ds
=\int_{C_z}
g\big(\dot z,\Dds{\nu}\big)\,g\big(\xi,\nu(z(s))\big)^+\,\mathrm  ds.
\]
Then by  \eqref{eq:10a} we get
\begin{equation}\label{eq:10}
\int_0^1
g\big(\dot z,\Ddt \xi\big)\,\mathrm ds
\geq
\int_{C_z}
g\big(\dot z,\Dds\nu\dot z\big)\,g(\xi,\nu)^+\,\mathrm ds.
\end{equation}
But $g(\dot z,\dot z)$ is in $L^1$, and then applying
\eqref{eq:10} to $\xi$ and $-\xi$,  we obtain the existence of a constant $L=L(z)$ such that
\[
\left\vert\int_0^1
 g\big(\dot z,\Ddt \xi\big)\,\mathrm ds
\;\right\vert
\le L(z)\,\Vert\xi\Vert_{L^\infty},
\]
for any vector field $\xi$  of class $H^1_0$, from which we deduce that $\dot z$ is in $L^\infty$.

Then, using again \eqref{eq:10} we deduce  that there exists a constant $M=M(z)$
such that
\[
\left\vert\int_0^1
 g\big(\dot z,\Ddt \xi\big)\,\mathrm  ds\;
\right\vert
\le M(z)\Vert\,\xi\Vert_{L^1}.
\]
for any vector field $\xi$ along $z$ of class $H^1_0$, from which the conclusion of the proof follows.
\end{proof}
 
\begin{lem}\label{thm:lem4.16}  
Let $z\in H^1\big([0,1],\overline \Omega\big)$ be a $\mathcal V^-$-critical curve for $\mathcal F$ in $\mathfrak M$. Then $g(\dot z, \dot z)$ is constant and 
\begin{equation}\label{eq:8.1bis}
-\Dds\dot z=
\lambda(s)\,\nu\big(z(s)\big) \text{ a.e.},
\end{equation}
where $\lambda \in L^{\infty}\big([0,1],\mathds R\big)$,  and $\lambda(s)=0$ if $\phi\big(z(s)\big)<0$.
Moreover
\begin{equation}\label{eq:lformula}
\lambda=\frac{g\big(H^{\phi}(z)[\dot z],\dot z\big)}{\sqrt{g\big(\nabla\phi(z),\nabla \phi(z)\big)}}\quad \text{a.e. in}\ C_z\quad\text{(see \eqref{eq:defCz})}
\end{equation}
and
\begin{equation}\label{eq:segno}
\lambda(s) \leq 0 \; a.e.
\end{equation}
\end{lem}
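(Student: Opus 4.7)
The plan is to bootstrap from Proposition~\ref{thm:lem5.3}, whose proof already delivers $z\in C^1$, $\Dds\dot z\in L^\infty$, and the fact that $z$ is a genuine geodesic on every connected component $]a_i,b_i[$ of $I_z=[0,1]\setminus C_z$. Consequently $\Dds\dot z=0$ on $I_z$, so the nontrivial structural information must be read off the closed set $C_z$, and the argument splits into four independent steps.

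First I would show that $\Dds\dot z$ is parallel to $\nu(z)$ almost everywhere. For any smooth vector field $W$ along $z$ with $W(s)\in T_{z(s)}M$ and $g\big(W(s),\nu(z(s))\big)\equiv 0$, and any $\chi\in C^\infty_c(]0,1[)$, the variation $\xi=\chi W$ satisfies \eqref{eq:Vbordo} trivially and \eqref{eq:Venter} with equality (on $C_z$, $g\big(\xi,\nabla\phi(z)\big)=\chi\,\|\nabla\phi(z)\|\,g(W,\nu(z))=0$). Hence $\pm\xi\in\mathcal V^-(z)$, and applying Definition~\ref{def:criticalcurve} to both signs and integrating by parts (permissible thanks to the $L^\infty$ bound on $\Dds\dot z$) yields
\[\int_0^1 \chi\,g\big(\Dds\dot z, W\big)\,\mathrm ds=0.\]
The arbitrariness of $\chi$ and $W$ forces $\Dds\dot z$ to be a.e.\ parallel to $\nu(z)$, producing the $L^\infty$ multiplier $\lambda$ that realizes \eqref{eq:8.1bis}, with $\lambda\equiv 0$ on $I_z$.

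To sign $\lambda$, I would test against $V=\psi\,\nu(z)$ with $\psi\in C^\infty_c(]0,1[)$, $\psi\le 0$. Since $g(V,\nabla\phi(z))=\psi\,\|\nabla\phi(z)\|\le 0$ on $C_z$, one has $V\in\mathcal V^-(z)$; integration by parts in \eqref{defcc} gives
\[0\le -\int_0^1 g\big(\Dds\dot z,V\big)\,\mathrm ds=\int_{C_z}\lambda\,\psi\,\mathrm ds,\]
which by arbitrariness of $\psi\le 0$ forces \eqref{eq:segno}. For formula \eqref{eq:lformula} I would exploit that $\phi\circ z\equiv 0$ on $C_z$. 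Because $z\in C^1$ with $\Dds\dot z\in L^\infty$, the map $\phi\circ z$ is of class $C^{1,1}$; at every Lebesgue density point of $C_z$ both its first and second derivatives vanish. The first-order identity $g(\nabla\phi(z),\dot z)=0$ expresses $\dot z\perp\nu(z)$ a.e.\ on $C_z$, and expanding the vanishing second derivative
\[0=g\big(H^\phi(z)[\dot z],\dot z\big)+g\big(\nabla\phi(z),\Dds\dot z\big),\]
then substituting $\Dds\dot z=-\lambda\,\nu(z)$ together with $g(\nabla\phi(z),\nu(z))=\sqrt{g(\nabla\phi(z),\nabla\phi(z))}$, produces exactly \eqref{eq:lformula}.

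Finally, constancy of $g(\dot z,\dot z)$ is immediate from $\tfrac{\mathrm d}{\mathrm ds}g(\dot z,\dot z)=2\,g(\Dds\dot z,\dot z)$, which vanishes on $I_z$ (where $\Dds\dot z=0$) and a.e.\ on $C_z$ (where $\Dds\dot z\parallel\nu(z)$ while $\dot z\perp\nu(z)$ by the previous step). The main technical obstacle I foresee is the rigorous differentiation of $\phi\circ z$ twice on the possibly Cantor-like closed set $C_z$; this requires a Lebesgue density-point argument, but it is standard for functions of class $C^{1,1}$ that vanish on a closed set, and it is the only place where measure-theoretic care, beyond the formal manipulation of \eqref{defcc}, is genuinely needed.
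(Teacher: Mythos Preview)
Your proposal is correct and follows essentially the same route as the paper: invoke Proposition~\ref{thm:lem5.3} for regularity, test \eqref{defcc} with vector fields tangential to $\partial\Omega$ along $C_z$ (where both signs are admissible) to obtain the multiplier equation \eqref{eq:8.1bis}, test with inward-pointing normal variations to obtain the sign \eqref{eq:segno}, differentiate $\phi\circ z$ twice on $C_z$ via \cite{GT} to get \eqref{eq:lformula}, and deduce constancy of $g(\dot z,\dot z)$ from $\Dds\dot z\parallel\nu(z)$ together with $\dot z\perp\nu(z)$ on $C_z$. The only differences are cosmetic (you use $\psi\,\nu(z)$ with $\psi\le 0$ where the paper uses $-\chi_n\nabla\phi(z)$ with $\chi_n\uparrow 1$, and you reorder a couple of steps).
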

\begin{proof}
By Proposition~\ref{thm:lem5.3}, $\Dds\dot z\in L^\infty$, so we can use partial integration in \eqref{defcc}, obtaining
\begin{equation}\label{eq:crit+boundary}
\int_0^1
g\big(-\Dds\dot z,V\big)\,\mathrm ds \ge 0,\qquad\text{for
all}\ V\in {\mathcal V}^-(x)\cap H^1_0\big([0,1],\mathds R^N\big).
\end{equation}
Then, $z$ is a free geodesic on any interval $I\subset[0,1]$ 
such that $z(I)\cap\partial\Omega=\emptyset$.
Moreover, using vector fields $V$ along $z$ such that $g\big(V(s),\nabla \phi(z(s))\big)=0$ for all $s$ such that $z(s)\in\partial\Omega$,  we obtain
\begin{equation}\label{eq:ast4}
\phantom{\qquad} -\Dds\dot z 
=\lambda(s)\,\nu\big(z(s)\big) \text{ a.e.},\quad\text{for some\
} \lambda\colon [0,1]\to\mathds R.
\end{equation}
Since $\Dds\dot z$ is in $L^\infty$, then $\lambda\in
L^\infty\big([0,1],\mathds R\big)$, while $\lambda=0$ a.e.\ on $z^{-1}(\Omega)$.

Now, $z$ is of class $C^1$ and $\phi\big(z(s)\big)\le 0$ for all $s\in[0,1]$. Therefore
\begin{equation}\label{eq:4.40bis}
g\big(\nabla\phi(z(s)),\dot z(s)\big)=0,\quad \text{for
every }s\in]0,1[  \text{ such that }\phi\big(z(s)\big)=0,
\end{equation}
and, contracting both terms of \eqref{eq:ast4} with $\dot z$, we obtain that $g(\dot z, \dot z)$ is constant.

Now $\phi(z(s))=0$ for all $s\in C_z$. Therefore, differentiating twice and using once again \cite{GT}, we obtain
\[
g\big(\mathrm H^{\phi}(z(s))[\dot z(s)],\dot z(s)\big)+ g\big(\nabla \phi(z(s)), \Dds\dot z(s)\big)= 0 \text{ a.e. on }C_z.
\]
Then contracting both terms of \eqref{eq:8.1bis} with $\nabla \phi(z(s)$ we obtain \eqref{eq:lformula}.\smallskip
 
It remains to prove \eqref{eq:segno}. Towards this goal,  apply formula  \eqref{eq:crit+boundary} on any interval $[\alpha,\beta] \subset [0,1]$ with
\[
V_n(s) = -\chi_n(s\,)\nabla\phi(z(s)),
\]
where $\chi_n$ is a sequence of positive bounded function, vanishing at $\alpha$ and $\beta$, and such that $\lim\limits_{n\to\infty}\chi_n(s)=1$ for all $s \in\left]\alpha, \beta\right[$.
Since $g\big(V_n,\nabla\phi(z)\big) \leq 0$ on $\partial \Omega$ for all $n$, we obtain
\[
\int_\alpha^\beta\lambda(s)\,g( \nabla\phi,\nu)\,\mathrm ds \leq 0
\]
for all $[\alpha,\beta] \subset [0,1]$. From this, we deduce  \eqref{eq:segno}.
\end{proof}
As to the endpoints of $\mathcal V^-$-critical curves, we have the following.
\begin{lem}\label{lem:orth}
If $z$ is a non-constant $\mathcal V^-$-critical curve of $\mathcal F$ in $\mathfrak M$,  then   $\dot z(0)$ and $\dot z(1)$ are orthogonal to $\partial \Omega$.
\end{lem}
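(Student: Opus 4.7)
The plan is to show that $g(\dot z(1), w) = 0$ for every $w \in T_{z(1)}\partial\Omega$ (and analogously at $s=0$). The trick is to construct, for each such $w$, an admissible variation $V \in \mathcal V^-(z)$ whose opposite $-V$ also belongs to $\mathcal V^-(z)$; this forces the criticality inequality \eqref{defcc} to become an equality and, after integration by parts, isolates $g(\dot z(1), w)$.

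First I extend the chosen $w \in T_{z(1)}\partial\Omega$ to a $C^1$ vector field $W$ defined on a neighborhood of $\overline\Omega$ in $M$, with $W(z(1)) = w$ and $g\big(W(p), \nabla\phi(p)\big) = 0$ for every $p$ in the tubular neighborhood $\phi^{-1}([-\delta_0,\delta_0])$. This is done by taking any $C^1$ extension $\tilde W$ of $w$ to $M$ and subtracting its $\nabla\phi$-component in that neighborhood (well-defined thanks to \eqref{eq:delta0}). I then fix a smooth cut-off $\chi\colon [0,1]\to\mathds R$ with $\chi(0)=0$ and $\chi(1)=1$, and set
\[
V(s) = \chi(s)\, W\!\big(z(s)\big).
\]
Admissibility is then direct: at $s=0$, $V(0)=0$ makes \eqref{eq:Vbordo} trivial; at $s=1$, $g(V(1), \nabla\phi(z(1))) = g(w, \nabla\phi(z(1))) = 0$ since $w$ is tangent to $\partial\Omega$; and for any interior $s$ with $z(s)\in\partial\Omega$, \eqref{eq:Venter} holds with \emph{equality} by construction of $W$. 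The same check applies with $V$ replaced by $-V$, so both vector fields belong to $\mathcal V^-(z)$.

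Applying \eqref{defcc} to $V$ and to $-V$ yields $\int_0^1 g(\dot z, \Dds V)\,\mathrm ds = 0$. The regularity $z\in C^1$ and $\Dds\dot z \in L^\infty$ from Proposition~\ref{thm:lem5.3} justifies integration by parts, and substituting $\Dds\dot z = -\lambda\,\nu(z)$ from \eqref{eq:8.1bis} gives
\[
0 = g(\dot z(1), w) + \int_0^1 \lambda(s)\,\chi(s)\, g\!\big(\nu(z(s)), W(z(s))\big)\,\mathrm ds.
\]
The integrand on the right vanishes almost everywhere: on the contact set $C_z$ from \eqref{eq:defCz} the factor $g(\nu(z), W(z))$ is zero by construction of $W$, while on $I_z = [0,1]\setminus C_z$ Lemma~\ref{thm:lem4.16} gives $\lambda\equiv 0$. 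Hence $g(\dot z(1), w) = 0$, and letting $w$ range over $T_{z(1)}\partial\Omega$ proves $\dot z(1) \perp T_{z(1)}\partial\Omega$. Using instead a cut-off with $\chi(0)=1$, $\chi(1)=0$ together with an analogous $W$ at $z(0)$ handles the other endpoint.

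The construction is essentially mechanical; the only point requiring care is that the contact set $C_z$ may be very irregular, so test variations that are only tangent to $\partial\Omega$ along $C_z$ but change sign freely off $C_z$ need not have an admissible opposite. Designing $W$ to be globally orthogonal to $\nabla\phi$ on the whole tubular neighborhood, and not only along $\partial\Omega$, sidesteps this issue entirely by making both $V$ and $-V$ simultaneously admissible.
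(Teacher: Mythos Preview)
Your argument is correct and follows essentially the same route as the paper: choose test fields $V$ that are tangent to $\partial\Omega$ at every contact point so that both $\pm V$ lie in $\mathcal V^-(z)$, turn the criticality inequality into an equality, integrate by parts, and use \eqref{eq:8.1bis} together with $\lambda\equiv 0$ off $C_z$ to kill the bulk term. The paper's version is only more compressed---it treats both endpoints simultaneously (obtaining $g(\dot z(1),V(1))-g(\dot z(0),V(0))=0$ with $V(0),V(1)$ arbitrary in $T\partial\Omega$) rather than using a cutoff to isolate each endpoint.
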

\begin{proof}
Take any vector field $V$ along $z$ such that 
\[
g\big(\nabla \phi(z(s)),V(s)\big)=0, \text{ for any $s$ such that }\phi\big(z(s)\big)=0.
\]
Using again the fact that, in this case, also $-V$ is an admissible infinitesimal variation of $z$ in $\mathfrak M$, we obtain
\[
\int_0^1g(\dot z, \Dds V)\,\mathrm ds=0.
\]
Then, partial integration and \eqref{eq:8.1bis} give
\[
g\big(\dot z(1),V(1)\big)-g\big(\dot z(0),V(0)\big)=0.
\]
Since $V(1)$ and $V(0)$ are arbitrary tangent vectors to $\partial \Omega$, we obtain desired conclusion.
\end{proof}

We conclude this section with a complete description of the $\mathcal V^-$-critical curves of $\mathcal F$ in $\mathfrak M$.
\begin{prop}\label{prop:crit=OGC}
Assume that $(\overline\Omega,g)$ has no O--T chords.
If $x$ is a $\mathcal V^-$-critical curve of $\mathcal F$ in $\mathfrak M$, with $\mathcal F(x) > 0$, then $x$ is an OGC.
\end{prop}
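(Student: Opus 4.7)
My plan is to reduce the proposition to showing that $x(s)\in\Omega$ for every $s\in(0,1)$: together with the $C^1$-regularity of $x$ from Proposition~\ref{thm:lem5.3}, the vanishing of the multiplier $\lambda$ in \eqref{eq:8.1bis} outside $C_z$ (Lemma~\ref{thm:lem4.16}), and the endpoint orthogonality of Lemma~\ref{lem:orth}, this immediately yields that $x$ is a geodesic chord with tangents at $s=0,1$ normal to $\partial\Omega$, i.e.\ an OGC.

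First, the hypothesis $\mathcal F(x)>0$ and Lemma~\ref{thm:lem4.16} give that $g(\dot x,\dot x)$ is a positive constant, so $\dot x$ never vanishes. Lemma~\ref{lem:orth} then forces $\dot x(0)=c\,\nu(x(0))$ for some $c\neq 0$. If it were $c>0$ one would have $(\phi\circ x)'(0)=c\,\|\nabla\phi(x(0))\|^2>0$, contradicting $\phi\circ x\leq 0$ on $[0,1]$; hence $c<0$, and in particular $x(s)\in\Omega$ on a right-neighborhood of $0$.

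Next, I would set
\[
\tau=\inf\bigl\{s\in(0,1]:x(s)\in\partial\Omega\bigr\}.
\]
The previous step gives $\tau\in(0,1]$; continuity of $x$ and closedness of $\partial\Omega$ yield $x(\tau)\in\partial\Omega$, while by definition of $\tau$ one has $x(s)\in\Omega$ for every $s\in(0,\tau)$. On $(0,\tau)$ the multiplier $\lambda$ in \eqref{eq:8.1bis} vanishes by Lemma~\ref{thm:lem4.16}, so $\Dds\dot x=0$ there, and $x|_{[0,\tau]}$ is a genuine geodesic of $(\overline\Omega,g)$. The proposition will therefore be proved once I show $\tau=1$.

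The main difficulty — and the only place where the no--O--T--chord hypothesis enters — is ruling out the case $\tau<1$. Assuming by contradiction $\tau<1$, the $C^1$ function $\phi\circ x$ attains its maximum value $0$ at the interior point $\tau\in(0,1)$, so
\[
0=(\phi\circ x)'(\tau)=g\bigl(\nabla\phi(x(\tau)),\dot x(\tau)\bigr),
\]
meaning $\dot x(\tau)$ is tangent to $\partial\Omega$. Since $\dot x(0)$ is orthogonal to $\partial\Omega$ by Lemma~\ref{lem:orth} and $x|_{[0,\tau]}$ is a geodesic with both endpoints on $\partial\Omega$, this exhibits $x|_{[0,\tau]}$ as an O--T chord of $(\overline\Omega,g)$, contradicting the standing hypothesis. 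Hence $\tau=1$, so $x\bigl((0,1)\bigr)\subset\Omega$ and $x$ is an OGC.
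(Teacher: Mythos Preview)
Your proof is correct and follows essentially the same route as the paper's own argument: show $\dot x\ne0$, use Lemma~\ref{lem:orth} to get orthogonality at $s=0$ with $x$ moving inward, take the first return time to $\partial\Omega$, and derive an O--T chord contradiction via the $C^1$-regularity of Proposition~\ref{thm:lem5.3}. You are simply more explicit than the paper in justifying that $x$ initially moves into $\Omega$ and that $\dot x(\tau)$ is tangent (minor slip: $(\phi\circ x)'(0)=c\,\Vert\nabla\phi(x(0))\Vert$, not the square, but the sign conclusion is unaffected).
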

\begin{proof}
If $\mathcal F(x)>0$, then $x$ is a non-constant $\mathcal V^-$-critical curve with $\dot x\ne0$ everywhere. By Lemma~\ref{lem:orth}, $x$ starts orthogonally from $\partial\Omega$ at the instant $s=0$, moving inside $\Omega$. We need to show that $x(s)\in\Omega$ for all $s\in\left]0,1\right[$. Assume by contradiction that this does not hold, and let $s_0\in\left]0,1\right[$ be the first instant at which $x(s_0)\in\partial\Omega$. Since $x$ is of class $C^1$ (Proposition~\ref{thm:lem5.3}), then $\dot x(s_0)$ must be tangent to $\partial\Omega$. But then $x_{|[0,s_0]}$ would be an O--T chord, which contradicts our assumption and proves the desired result.
\end{proof}

The above results entitles us to give the following:
\begin{defin}
We say that $c > 0$ is a  $\mathcal V^-$-\emph{critical value} for $\mathcal F\colon\mathfrak M\to\mathds R$ if there exists a $\mathcal V^{-}$-critical curve $x$  such that $\mathcal F(x)=c$.
\end{defin}

\section{$\mathcal V^{-}$-Palais-Smale sequences for the functional $\mathcal F$}\label{sec:limitcriticalcurves} 
In this section we  study the Palais-Smale sequences for the functional $\mathcal F$ in the strip    
${\mathcal F}^{-1}\big(\big[{\delta_0^2}/{K_0^2},M_0^2\big]\big)$.
\begin{defin}\label{def:OpenPS} 
We say that $(x_n)_n\subset \mathfrak M$
 is a $\mathcal V^-$-\emph{Palais-Smale sequence}  for $\mathcal F$ in the strip ${\mathcal F}^{-1}\big(\big[{\delta_0^2}/{K_0^2},M_0^2\big]\big)$, if 
\begin{equation}\label{eq:Fxnbounded}
 \frac{\delta_0^2}{K_0^2}\le \mathcal F(x_n)\le M_0^2,\quad\forall\,n
 \end{equation}
  and if for all (sufficiently large) $n\in\mathds N$
and for all $V_n \in \mathcal V^-(x_n)$
such that $\Vert V_n\Vert_*=1$, the following holds:
\begin{equation}\label{eq:OPS}
\mathrm d\mathcal F(x_n)[V_n] \geq -\epsilon_n,
\end{equation} 
where $\epsilon_n \to 0^+$.
\end{defin}
Note that, since $\overline\Omega$ is compact, if $(x_n)_n\subset\mathfrak{M}$ is a sequence such that $\mathcal F(x_n)$ is bounded, then $(x_n)$ admits a subsequence which is weakly convergent in $H^1$ (and therefore uniformly convergent).
Let us show that the weak limit $x$ of a $\mathcal V^{-}$-Palais-Smale sequence $(x_n)_n$ in the strip 
${\mathcal F}^{-1}\big(\big[{\delta_0^2}/{K_0^2},M_0^2\big]\big)$, is in fact a strong limit, and it is a critical point of $\mathcal F$.

\begin{prop}\label{prop:convforte}
Let $(x_n)_n$ be a $\mathcal V^{-}$-Palais-Smale sequence which is weakly convergent to a  curve $x$. Then  $(x_{n})_n$ is strongly $H^1$-convergent to $x$.
\end{prop}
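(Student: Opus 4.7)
The plan is to establish $\mathcal F(x_n) \to \mathcal F(x)$, which together with the weak $H^1$-convergence $x_n \rightharpoonup x$ (obtained along a subsequence from the energy bound \eqref{eq:Fxnbounded} and compactness of $\overline\Omega$) yields strong $H^1$-convergence. By weak lower semicontinuity of $\mathcal F$, it suffices to prove $\limsup_n \mathcal F(x_n) \le \mathcal F(x)$. Note that weak $H^1$-convergence gives $x_n \to x$ uniformly by the compact Sobolev embedding, hence $x \in \mathfrak M$ by closedness of $\overline\Omega$ and of $\partial\Omega$.

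To extract the $\limsup$-inequality from the Palais-Smale condition \eqref{eq:OPS}, I would test it against admissible fields built from the displacement $x - x_n$. Two modifications are needed: projection onto $T_{x_n(s)}M$, and correction of the outward-normal component to enforce \eqref{eq:Vbordo}--\eqref{eq:Venter}. Following the scheme used in the proof of Proposition~\ref{thm:lem5.3}, let $\xi_n(s)$ be the $g$-orthogonal projection of the Euclidean vector $x(s)-x_n(s)$ onto $T_{x_n(s)}M$ (an $H^1$-small modification, because $x_n, x$ and $\dot x_n, \dot x$ are tangent to $M$), and set
\[
V_n(s) \;=\; \xi_n(s) - g\bigl(\xi_n(s), \nu(x_n(s))\bigr)^{+}\,\nu(x_n(s)),
\]
with a further $O(\|x-x_n\|_\infty^2)$ endpoint adjustment at $s=0,1$ promoting the inequality $g(\nabla\phi(x_n),V_n)\le 0$ at the endpoints into the equality required by \eqref{eq:Vbordo}. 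By construction $V_n \in \mathcal V^-(x_n)$.

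The key uniform estimate is that $g(\xi_n(s), \nu(x_n(s)))^+$ vanishes in $L^\infty([0,1])$. Indeed, a Taylor expansion yields
\[
g(\xi_n(s), \nu(x_n(s))) \;=\; \tfrac{1}{\|\nabla\phi(x_n(s))\|}\bigl(\phi(x(s))-\phi(x_n(s))\bigr) + O(\|x-x_n\|_\infty^2),
\]
which is $O(\|x-x_n\|_\infty)$ uniformly since $\phi\circ x_n \to \phi\circ x$ uniformly, and on $C_{x_n}=\{s:\phi(x_n(s))=0\}$ is bounded above by $O(\|x-x_n\|_\infty^2)$ because $\phi(x(s))\le 0 = \phi(x_n(s))$. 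Writing $V_n = (x-x_n) + r_n$, the remainder $r_n$ is therefore $O(\|x-x_n\|_\infty)$ in sup-norm.

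Plugging $V_n$ into \eqref{eq:OPS} via \eqref{eq:dermathcalF} gives
\[
2\int_0^1 g\bigl(\dot x_n, \dot x - \dot x_n\bigr)\,\mathrm ds + 2\int_0^1 g\bigl(\dot x_n, \Dds r_n\bigr)\,\mathrm ds \;\ge\; -\epsilon_n\|V_n\|_*,
\]
and the decisive step is to show $\int_0^1 g(\dot x_n, \Dds r_n)\,\mathrm ds = o(1)$. Besides the sup-norm smallness of $r_n$, this relies on the a.e.\ identity $g(\nabla\phi(x_n(s)), \dot x_n(s))=0$ on $C_{x_n}$, which follows from $\phi\circ x_n\in H^1$ being constantly zero on $C_{x_n}$ (cf.\ \cite{GT}); this orthogonalizes $\dot x_n$ to $\nu(x_n)$ precisely where the normal correction in $r_n$ lives. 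Once this is in hand, passing to the $\limsup$ and using the weak convergence $\int g(\dot x_n, \dot x)\,\mathrm ds \to \mathcal F(x)$ delivers $\limsup_n \mathcal F(x_n) \le \mathcal F(x)$. The main obstacle I anticipate is exactly the $L^2$-control of $\Dds r_n$: the positive part $(\cdot)^+$ destroys cancellations in the pointwise derivative, so one cannot rely on a soft projection estimate; instead, one must combine the tangency of $\dot x_n$ to $\partial\Omega$ on $C_{x_n}$ with a fine estimate relating the support of $g(\xi_n, \nu(x_n))^+$ to the proximity of $x_n(s)$ to $\partial\Omega$.
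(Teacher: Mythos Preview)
Your overall strategy is sound and equivalent to the paper's: test the Palais--Smale inequality against an admissible field that approximates $x-x_n$, then extract $\int g(\dot x_n,\dot x-\dot x_n)\,\mathrm ds\ge o(1)$. The difficulty you yourself flag, however, is not a technicality but a real obstruction, and the fix you sketch does not close it.

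The suggested remedy relies on the identity $g(\dot x_n,\nu(x_n))=0$ a.e.\ on $C_{x_n}$. But the support of the correction $\eta_n=g(\xi_n,\nu(x_n))^+\nu(x_n)$ is \emph{not} contained in $C_{x_n}$: by your own Taylor expansion, $g(\xi_n,\nu(x_n))>0$ roughly where $\phi(x(s))>\phi(x_n(s))$, which can happen on large portions of $[0,1]$ where $x_n$ is strictly interior and $g(\dot x_n,\nu(x_n))$ has no reason to be small. On that set the dominant contribution to $\int g(\dot x_n,\Dds\eta_n)\,\mathrm ds$ is
\[
\int_{\{g(\xi_n,\nu(x_n))>0\}} g(\dot x_n,\nu(x_n))\,\tfrac{\mathrm d}{\mathrm ds}\big[g(\xi_n,\nu(x_n))\big]\,\mathrm ds,
\]
which is of order $\int|\dot x_n|\,|\dot x-\dot x_n|\,\mathrm ds$ with no sign control and no smallness. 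The sign argument that works in Proposition~\ref{thm:lem5.3} (the analogue of equation \eqref{eq:6}) depends crucially on the critical curve being a genuine geodesic on each interval of $[0,1]\setminus C_z$, so that integration by parts on those intervals produces boundary terms with a definite sign. For a generic Palais--Smale element $x_n$ no such structure is available, and neither integration by parts (which reintroduces $\Dds\dot x_n$) nor a ``support near the boundary'' estimate rescues the term.

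The paper avoids this issue by a different construction of $V_n$: it introduces an auxiliary metric $\overline g$ on $\overline\Omega$ for which $\partial\Omega$ is \emph{totally geodesic}, and sets $V_n(s)=\big(\overline\exp_{x_n(s)}\big)^{-1}\!\big(x(s)\big)$. The totally geodesic property makes $V_n\in\mathcal V^-(x_n)$ automatically, with no positive-part cut-off at all; the inverse exponential map is smooth, so a local-chart computation gives $\Dds V_n=\dot x-\dot x_n+w_n$ with $w_n\to0$ in $L^2$, and the argument closes in one line. In short: your projection-and-truncation scheme imports the difficulty from Proposition~\ref{thm:lem5.3} without importing the geodesic structure that resolves it there, whereas the paper's geometric choice of $V_n$ sidesteps the truncation entirely.
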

 
\begin{proof} 
Let us consider an auxiliary Riemannian metric $\overline g$ in $\overline\Omega$ for which the boundary $\partial\Omega$ is totally geodesic\footnote{%
There is a standard construction of metrics for which a given closed embedded submanifold of a differentiable manifold is totally geodesic. Such metrics are constructed in a tubular neighborhood first, using a normal bundle construction, and then extended using a partition of unity argument.}, and let $\overline\exp$ denote the relative exponential map. 
Note that this means 
\begin{equation}\label{eq:TG}
\text {for all }p \in \partial \Omega, \; \overline\exp_p^{-1}(\partial \Omega)=T_p(\partial\Omega).
\end{equation}

For all $n$ sufficiently large, define the following vector field $V_n$ along the curve $x_n$:
\[V_n(s)=(\overline\exp_{x_n(s)})^{-1}\big(x(s)\big).\]
This is well defined for $n$ sufficiently large, because $x_n$ tends to $x$ uniformly. Also, $\Vert V_n\Vert_*$ is bounded, 
because $(x_n)$ is bounded in $H^1$. It is also easy to check that, using the fact that $\partial\Omega$ is totally geodesic relatively to $\overline g$,  $V_n\in\mathcal V^-(x_n)$ for all $n$. Namely, if $x_n(s)\in\partial\Omega$, since $x(s)\in\overline\Omega$, then $V_n(s)$ points inside $\Omega$. Similarly, for $s=0,1$, both $x_n(s)$ and $x(s)$ are in $\partial\Omega$, and thus $V(s)$ is tangent to $\partial\Omega$. Since $(x_n)$ is a $\mathcal V^-$-Palais-Smale sequence, by \eqref{eq:OPS} we have:
\begin{equation}\label{eq:liminfDdsVn}
\liminf_{n\to\infty}\int_0^1g\big(\dot x_n,\Dds V_n\big)\,\mathrm ds\ge0.
\end{equation}
Let $U_1,\ldots,U_v$ be the domains of local charts covering $x\big([0,1]\big)$, and let $[a_i,b_i]$ ($i=1,\ldots,k$) intervals covering $[0,1]$  such that $x\big([a_i,b_i]\big) \subset U_i$ for all $i$.
Using the fact that $V_n$ tends to $0$ uniformly, $x_n$ tends to $x$ uniformly as $n\to\infty$, and that $\Vert\dot x_n\Vert_{L^2}$ is bounded, one sees easily that, using the above local charts, in any interval $[a_i,b_i]$ the covariant derivative $\Dds V_n$ is given by an expression of the form:
\begin{equation}\label{eq:x-xn}
\Dds V_n = \dot x-\dot x_n+w_n^i,
\end{equation}
where $w_n^i$ is $L^2$-convergent to $0$. 
Moreover, by the weak $L^2$--convergence of $\dot x_n$ to $\dot x$, we have:
\[
\int_{a_i}^{b_i}g(\dot x, \dot x- \dot x_n)\,\mathrm ds \to 0.
\]
Then,
from \eqref{eq:liminfDdsVn} and \eqref{eq:x-xn} one obtains  the $H^1$-convergence of $x_n$ to $x$.
\end{proof}
In particular, the strong $H^1$-convergence implies that, if $x$ is the limit of a $\mathcal V^-$--Palais-Smale sequence for $\mathcal F$
in the strip ${\mathcal F}^{-1}\big(\big[{\delta_0^2}/{K_0^2},M_0^2\big]\big)$, then $\mathcal F(x)\ge\delta_0^2/2M_0^2$, and therefore $x$ is not a constant curve.


Our next step will be the proof of existence of local vector fields in $\mathfrak M$, having the property that $\mathcal F$ is decreasing along their flow lines,. We will use the following notation: for all $x \in \mathfrak M$ and all $\rho > 0$ set
\[
B(x,\rho)=\big \{z \in \mathfrak M: \Vert z-x \Vert_* < \rho\big\}
\]
and 
\begin{equation}\label{eq:defUxrho}
\mathcal U_x(\rho)=B(x,\rho) \bigcup B(\mathcal Rx,\rho),
\end{equation}
which is clearly $\mathcal R$--invariant. Let us also use the following terminology: let $x\in\mathfrak M$ and $\mu>0$ be fixed. We say that $\mathcal F$ has \emph{steepness greater than or equal to $\mu$ at $x$} if there exists $V_x \in \mathcal V^{-}(x)$ with:
\begin{itemize}
\item[(a)] $\Vert V_x \Vert_*=1$,
\item[(b)] $\displaystyle\int_0^1 g\big(\dot x, \Dds V_x\big)\,\mathrm ds\leq -\mu$.
\end{itemize}
In this situation, \emph{$V_x$ is a direction of $\mu$-steep descent} for $\mathcal F$ at $x$.
 
 For all  $\delta > 0$, we also define
\begin{equation}
\mathcal V^{-}_\delta(x) =\big\{V \in \mathcal V^{-}(x):
g(\nabla \phi(x(s)),V(s)) \leq 0\ \text{when}\  \phi(x(s))\in [-\delta,\delta]\big\}.
\end{equation}

We can now prove the following:

\begin{prop}\label{thm:constrcampiV-}
Let $x \in \mathfrak M$ be such that $\mathcal F(x) \in \left[{\delta_0^2}/{K_0^2},M_0^2\right]$ and let $\mu>0$ be fixed;  assume that $\mathcal F$ has steepness greater than or equal to $\mu$ at $x$.
Then, for any $\varepsilon > 0$ there exist 
$\rho_x=\rho_x(\varepsilon)> 0$, $\delta_x=\delta_x(\epsilon)> 0$  and a $C^{1}$-vector field $V=V_{x,\epsilon}$ defined in $\mathcal U_x(\rho_x)$,  such that:
\begin{itemize}
\item[(i)] $V(\mathcal R z)=\mathcal R V(z)$;\smallskip

\item[(ii)] $V(z) \in \mathcal V^{-}_{\delta_x}(z)$;\smallskip

\item[(iii)] $\Vert V(z) \Vert_*=1$;\smallskip

\item[(iv)] $\int_0^1 g\big(\dot z, \Dds V(z)\big)\,\mathrm ds \leq -\mu+\varepsilon$,
\end{itemize}
for all $z\in \mathcal U_x(\rho_x)$. 
\end{prop}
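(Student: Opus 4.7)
Our plan is to construct $V$ on the ball $B(x,\rho_x)$ and then extend it to $\mathcal U_x(\rho_x)=B(x,\rho_x)\cup B(\mathcal Rx,\rho_x)$ by setting $V(\mathcal Rw):=\mathcal RV(w)$. Because $\mathcal F(x)\ge\delta_0^2/K_0^2>0$, Remark~\ref{rem:noROGC} guarantees $x\ne\mathcal Rx$ in $\mathfrak M$, so for $\rho_x$ small enough the two balls are disjoint and condition (i) is automatic.

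The construction on $B(x,\rho_x)$ proceeds in two steps. First, we strengthen $V_x$ to a vector field $\widetilde V_x\in\mathcal V^-_{\delta/2}(x)$ carrying a strict interior sign margin. Fix a smooth cutoff $\chi_\delta\colon\mathds R\to[0,1]$ with $\chi_\delta\equiv 1$ on $[-\delta/2,\delta/2]$ and supported in $[-\delta,\delta]$, a smooth $\beta\colon[0,1]\to[0,1]$ vanishing at $s=0,1$ and equal to $1$ on a large subinterval, and a small $\eta>0$; set
\[
\widetilde V_x(s)=V_x(s)-\chi_\delta\bigl(\phi(x(s))\bigr)\Bigl[g\bigl(V_x,\nu(x)\bigr)^++\eta\beta(s)\Bigr]\nu(x(s)).
\]
The correction vanishes at $s=0,1$ (since $V_x$ is tangent to $\partial\Omega$ there and $\beta(0)=\beta(1)=0$), so the endpoint condition of $\mathcal V^-$ persists, while on $\{\phi(x)\in[-\delta/2,\delta/2]\}$ a direct computation (using that $\nu$ is $C^1$-close to $\nabla\phi/\|\nabla\phi\|$ near $\partial\Omega$) yields $g(\nabla\phi(x),\widetilde V_x)\le -\eta\beta(s)\|\nabla\phi\|^2\le 0$, giving $\widetilde V_x\in\mathcal V^-_{\delta/2}(x)$ with a strict margin wherever $\beta>0$. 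Since the correction is supported in $\{\phi(x)\ge-\delta\}$, a set shrinking to $C_x=\{\phi\circ x=0\}$ as $\delta\to 0^+$, and $g(V_x,\nu(x))^+$ vanishes on $C_x$ by continuity and the sign condition for $V_x$, one obtains $\|\widetilde V_x-V_x\|_*\le\varepsilon/3$ and $|\mathrm d\mathcal F(x)[\widetilde V_x-V_x]|\le\varepsilon/3$ for $\delta,\eta$ small enough.

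Second, we transport $\widetilde V_x$ to nearby curves using an auxiliary metric $\bar g$ on $M$ for which $\partial\Omega$ is totally geodesic (such a metric is already used in the proof of Proposition~\ref{prop:convforte}). Let $\mathcal P^{\bar g}_{p\to q}$ denote $\bar g$-parallel transport along the short $\bar g$-geodesic from $p$ to $q$, and set
\[
V(z)(s):=\mathcal P^{\bar g}_{x(s)\to z(s)}\widetilde V_x(s),\qquad z\in B(x,\rho_x).
\]
At $s=0,1$ the $\bar g$-geodesic joining $x(s),z(s)\in\partial\Omega$ stays in $\partial\Omega$ (totally geodesic property) and parallel transport along it preserves $T\partial\Omega$; hence $V(z)(0),V(z)(1)\in T\partial\Omega$, which supplies the endpoint clause of (ii). The map $z\mapsto V(z)$ is $C^1$, $V(x)=\widetilde V_x$, and uniform continuity of $\mathcal P^{\bar g}$ gives $V(z)\to\widetilde V_x$ in $\|\cdot\|_*$ as $\|z-x\|_*\to 0$; after the normalisation $V(z)\leftarrow V(z)/\|V(z)\|_*$, (iii) is immediate and (iv) follows from $\mathrm d\mathcal F(x)[\widetilde V_x]\le -\mu+\varepsilon/3$ together with the continuity of $z\mapsto\mathrm d\mathcal F(z)[V(z)]$.

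The main technical obstacle is the interior part of (ii). Choose $\delta_x,\rho_x>0$ with $\delta_x+K_0\rho_x<\delta/2$; by Lemma~\ref{thm:lem2.3}, the set $\{s\colon\phi(z(s))\in[-\delta_x,\delta_x]\}$ is contained in $\{s\colon\phi(x(s))\in[-\delta/2,\delta/2]\}$, on which the strict margin $-\eta\beta(s)\|\nabla\phi\|^2$ at $(x(s),\widetilde V_x(s))$ absorbs the $O(\rho_x)$ error coming from $\mathcal P^{\bar g}$ wherever $\beta(s)$ is bounded away from $0$. The delicate regime is the shrinking neighbourhood of the endpoints where $\beta\to 0$: there the margin degenerates, but $V(z)(s)$ is then close to its tangent endpoint value and a further shrinking of $\delta_x$ confines the offending $s$ to an arbitrarily small window around $s=0,1$, inside which a careful first-order analysis of $g(\nabla\phi(z(s)),V(z)(s))$ near the endpoint (exploiting both the endpoint tangency of $V(z)$ and the smoothness of $\widetilde V_x$, together with a minor boundary-preserving adjustment absorbed into the $\varepsilon/3$ budget for (iv)) restores the sign condition. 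Carrying out this dovetailing between the interior strict margin and the endpoint tangency, while preserving $C^1$ regularity and $\mathcal R$-compatibility, is the technical heart of the argument.
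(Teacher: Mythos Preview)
Your opening step contains a genuine error: you invoke Remark~\ref{rem:noROGC} to conclude that $\mathcal F(x)>0$ forces $x\ne\mathcal Rx$, but that remark only concerns orthogonal geodesic chords (its argument uses that $g(\dot\gamma,\dot\gamma)$ is constant). For a generic $x\in\mathfrak M$ the implication fails: take any nonconstant $H^1$-path $y\colon[0,\tfrac12]\to\overline\Omega$ with $y(0)\in\partial\Omega$ and set $x(s)=y(s)$ on $[0,\tfrac12]$, $x(s)=y(1-s)$ on $[\tfrac12,1]$; then $\mathcal Rx=x$ and $\mathcal F(x)>0$. In this case $B(x,\rho)=B(\mathcal Rx,\rho)$ and your extension by $\mathcal R$-equivariance is ill-posed unless the field you have already built on $B(x,\rho)$ is itself $\mathcal R$-equivariant. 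The paper treats this case explicitly: when $x=\mathcal Rx$ one first symmetrises the descent direction, replacing $V_x$ by $(V_x+\mathcal RV_x)/\|V_x+\mathcal RV_x\|_*$ (which is still a direction of $\mu$-steep descent), and then symmetrises the local field at the end. You must either do the same or otherwise guarantee $\mathcal R$-compatibility on the single ball.

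A second, structural point: you reverse the order of the paper's construction---you correct $V_x$ first to gain a strict interior margin and then parallel-transport, whereas the paper transports first and corrects afterwards via an explicit piecewise function $\chi_\lambda$. Your order forces you to confront exactly the difficulty you flag as the ``technical heart'': near the endpoints the margin $\eta\beta(s)$ degenerates while the transport error does not, and your sketch (``a careful first-order analysis \ldots\ restores the sign condition'') is not a proof. The paper's scheme sidesteps this by making the correction depend on $z$ (through $g(W(z)(s),\nu(z(s)))^+$) and by using the auxiliary linear function $c(s)$ together with the parameter $\lambda(\rho,\delta)\to0$, so that the endpoint tangency and the interior sign are enforced simultaneously for every $z\in B(x,\rho)$ without any residual boundary layer to analyse. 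If you wish to keep your order of operations, you need to actually carry out the endpoint analysis rather than describe it.
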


\begin{proof}
Assume first $x \not= \mathcal R x$. 
Let $V_x$ be a direction of $\mu$--steep  descent for $\mathcal F$ at $x$. 

Let us denote by $\overline\exp$ the exponential map of a smooth metric that makes $\partial \Omega$ totally geodesic.
For $\rho>0$ sufficiently small and $z \in B(x,\rho)$, set:
\begin{equation}\label{eq:defsuz} 
W(z)(s)=
\big(\mathrm d\,\overline\exp_{z(s)}(w(s))\big)^{-1}\big(V_x(s)\big),
\end{equation}
where $w(s)$ is defined by the relation:
\begin{equation}\label{eq:expbar}
\overline\exp_{z(s)}\big(w(s)\big)=x(s),\quad \forall\, s.
\end{equation}
From \eqref{eq:TG} it follows:
\begin{equation}\label{eq:Wbordo2}
g\big(W(z)(0),\nabla \phi(z(0))\big)=g\big(W(z)(1),\nabla \phi(z(1))\big)=0.
\end{equation}
Let $\nu$ the unit vector field in $\phi^{-1}([-\delta_0,\delta_0])$ defined by:
\[
\nu(y)=\frac{\nabla \phi(y)}{\sqrt{g(\nabla \phi(y),\nabla \phi(y))}};
\]
for $\sigma \in \mathds{R}$ denote by $\sigma^+=\max\{\sigma,0\}$:
\[
\sigma^+=\tfrac12(|s|+s),
\]
and define
\begin{multline*}
\lambda=\lambda(\rho,\delta) = \sup\Big\{g\big(W(z)(s), \nu(z(s))\big)^+:\\
 s \in [0,1], \; z \in B(x,\rho), \; \phi\big(z(s)\big)\in [-\delta,\delta]\Big\}.
\end{multline*}
Note that
\[
\lim_{(\rho,\delta)\to(0,0)}\lambda(\rho,\delta)=0.
\]
Then, for $\rho$ and $\delta$ sufficiently small, one has $\lambda \in [0,\frac12[$, and  we can define 
\[
a_\lambda=g(W(z)(\lambda),\nu(x(\lambda)))^+, \quad
b_\lambda=g(W(z)(1-\lambda),\nu(x(1-\lambda)))^+
\]
which satisfy $0\le a_\lambda, b_\lambda\le\lambda$, and therefore
\[
a_\lambda \to 0,\quad b_\lambda \longrightarrow 0\quad \text{as}\quad(\rho,\delta) \longrightarrow (0,0).
\]
Using $a_\lambda$ and $b_\lambda$, we now define the map $\chi_\lambda\colon[0,1]\to\mathds R$ by:
\begin{equation*}
\chi_\lambda(s)=
\begin{cases}
g(W(z)(s),\nu(z(s))^+,& \text{ if }s \in [0,\lambda];\\[.2cm]
\frac{b_\lambda-a_\lambda}{1-2\lambda}(s-\lambda)+a_\lambda,
& \text{ if } s \in [\lambda,1-\lambda];\\[.2cm]
g(W(z)(s),\nu(z(s))^+,& \text{ if }s \in [1-\lambda,1].
\end{cases}
\end{equation*}
Now, let us  consider piecewise linear maps $\theta,c\colon\mathds R\to[0,1]$  satisfying:
\begin{equation*}
\theta\equiv0\ \text{on }\ \left]-\infty,-\delta_0\right]\bigcup\left[\delta_0,+\infty\right[ \quad\text{and}\quad\theta\equiv1\ \text{on } \big[-\tfrac{\delta_0}2,\tfrac{\delta_0}2\big].
\end{equation*}
and 
\[
c(s)=
\begin{cases}
s,& \text{ if }s \in [0,\frac12];\\[.2cm]
1-s,& \text{ if }s \in [\frac12,1].
\end{cases}
\]
Finally, let us define 
\begin{equation}\label{eq:defWlambda}
W_\lambda(z)(s)=W(z)(s)-\big(\chi_\lambda(s)+\lambda c(s)\big)\theta\big(\phi(z(s)\big)\,\nu\big(z(s)\big).
\end{equation}
We have 
\[
g(W_{\lambda}(z)(s),\nu(z(s)) \leq 0 \text{ for all }z\in B(x,\rho),\text{and for all $s$ such that } \phi(z(s))\in [-\delta,\delta],
\]
as we can see considering separately the cases  $s \in [0,\lambda] \cup [1-\lambda,1]$ and $s \in [\lambda,1-\lambda]$. Moreover:
\[
g(W_{\lambda}(z)(0),\nu(z(0))=g(W_{\lambda}(z)(1),\nu(z(1)) =0,
\]  
which says that $W_\lambda(z) \in \mathcal V^-_{\delta}(z)$ for all $z \in B(x,\rho)$. 
It is also easy to see that

\begin{equation}\label{eq:limiteWl}
\lim_{(\rho,\delta)\to(0,0)} \sup_{z \in B(x,\rho)}
\Vert W_\lambda(z)-V_x\Vert_*=0
\end{equation}

Since for all $\rho,\delta$ sufficiently small $W_\lambda(z)\not\equiv 0$, we can define 
\[
V_\lambda(z)(s)=\frac{W_\lambda(z)(s)}{\Vert W_\lambda(z) \Vert_*}.
\]
But $\Vert V_x \Vert_* =1$, so by \eqref{eq:limiteWl}: 
\begin{equation*}
\lim_{(\rho,\delta)\to(0,0)} \sup_{z \in B(x,\rho)}
\Vert V_\lambda(z)-V_x\Vert_*=0,
\end{equation*} 
and we get property (iv).

Now, recalling that we are assuming  $x \not= \mathcal R x$, we  extend $V_\lambda$ 
to $\mathcal R(B(x,\rho_\lambda))$ by setting:
\[
V_\lambda(\mathcal Rz)=\mathcal R V_\lambda(z).
\]
Then, the desired vector field $V$ is obtained by setting $V=V_\lambda$, with $\lambda= \lambda (\rho,\delta)$, $\rho=\rho(\varepsilon)$, and $\delta=\delta(\varepsilon)$  choosen sufficiently small (depending on $\varepsilon$).

\smallskip

Now, observe that if $x=\mathcal Rx$ and   $V_x$ is a direction of $\mu$--steep descent we can consider
\[
\widetilde V_x=\frac{V_x+\mathcal R V_x }{\Vert V_x+\mathcal R V_x  \Vert_*}.
\]
Since 
\[
\int_0^1g\big(\dot x, \tfrac{\mathrm DV_x}{\mathrm ds}\big)\,\mathrm ds=\int_0^1g\big(\dot x, \tfrac{\mathrm D}{ds}\mathcal R V_x\big)\,\mathrm ds
\]
we see that $V_x+\mathcal R V_x \not\equiv 0$, and $\widetilde V_x$ is a direction of $\mu$--steep descent for $\mathcal F$. We can therefore assume that the direction $V_x$ of $\mu$--steep descent for $\mathcal F$ at $x$ is such that 
\[
V_x = \mathcal R V_x.
\]
If  $W_\lambda(z)$ is the vector field as in \eqref{eq:defWlambda}, we can finally choose 
\[ 
V(z)(s)= \frac{W_\lambda(z)(s)+ W_\lambda(\mathcal R z)(s)}{\big\Vert  W_\lambda(z)+ W_\lambda(\mathcal R z)(s) \big\Vert_*}.
\qedhere\]

\end{proof}

\section{Deformation Lemmas}\label{sec:deflem}
\subsection{Admissible homotopies}\label{sub:admhom}
Let $\mathfrak C_0$ be as in \eqref{eq:CC0}.  We will consider continuous maps
$h\colon [0,1]\times\mathfrak M\to\mathfrak M$ 
satisfying the following properties:
\begin{eqnarray}\
&h(0,\cdot) \text{ is the identity map  } 
\text{ in }\mathfrak M;& \label{eq:numero1}
\\
&h(\tau,\gamma) = \gamma \text{ for all }
\tau\in[0,1], \text{ for all } \gamma\in\mathfrak C_0;&\label{eq:numero2}
\\
&h(\tau,\cdot) \text{ is $\mathcal R$--equivariant, i.e. }
h(\tau,\mathcal R\gamma) = \mathcal Rh(\tau,\gamma) \text{ for all }\tau \in [0,1], \gamma \in \mathfrak M.&\label{eq:numero3}
\end{eqnarray}
We will call \emph{admissible homotopies} all maps $h\colon[0,1]\times\mathcal D\to\mathfrak M$ satisfying \eqref{eq:numero1}--\eqref{eq:numero3}, and we will denote by $\mathcal H$ the set of admissible homotopies.

\subsection{Admissible homotopies and weak slope}
Denote by $d$ the distance \eqref{eq:distanza}. Denote by $\mathcal H$ the class of all continuous homotopies $h$ defined 
at the above section. Let $\mathcal U_x(\rho)$ be as in \eqref{eq:defUxrho}.

According to \cite{CD,degmarz} we give the following definitions
\begin{defin}\label{def:ws}
For every $x \in \mathfrak M$ we denote by $|d\mathcal F|(x)$  the supremum of the $\sigma$'s in $\left[0,+\infty\right[$ such that there exists $\delta>0$ and a continuous homotopy $h \in \mathcal H$  such that 
\begin{equation}\label{eq:limitazione}
d\big(h(\tau,z),z\big) \leq \tau,\ 
\text{ for every }z \in \mathcal U_x(\delta),\ \text{ and }\tau\in [0,\delta],
\end{equation}
\begin{equation}\label{eq:discesa}
\mathcal F\big(h(\tau,z)\big)\leq \mathcal F(z)-\sigma\tau,\ 
\text{ for every }z \in \mathcal U_x(\delta) \text{ and }\tau\in [0,\delta].
\end{equation}
The extended real number $|d\mathcal F|(x)$ is called {\emph weak slope} of $\mathcal F$ at $x$ (with respect to $\mathcal H$).
\end{defin}

\begin{defin}\label{def:punto critico}
A curve $x \in \mathfrak M$ is called a \emph{critical curve for $\mathcal F$} if $\big|\mathrm d\mathcal F\big|(x)=0$. A real number $c$ is called critical value for $\mathcal F$ if there exists $x \in \mathfrak M$ critical curve  such that $\mathcal F(x) =c$.
\end{defin}
\begin{defin}\label{def:PS}
We say that $(x_n)_n\subset \mathfrak M$
 is a \emph{Palais-Smale sequence}  for $\mathcal F$ in the strip ${\mathcal F}^{-1}\big(\big[{\delta_0^2}/{K_0^2},M_0^2\big]\big)$, if $x_n$ satisfies \eqref{eq:Fxnbounded} and 
\begin{equation}\label{eq:infweak}
\big|\mathrm d\mathcal F\big|(x_n) \longrightarrow 0,\quad\text{as $n\to\infty$.}
\end{equation}
\end{defin}

\begin{rem}
As observed in \cite{CD}, it is an obvious consequence of its definition that the function $\vert\mathrm d\mathcal F\vert\colon \mathfrak M \to [0,+\infty]$ is lower semi-continuous. It follows that if a Palais-Smale sequence $x_n$ converges to $x$, the curve $x$ is a critical curve.
\end{rem}
Now, using the integral flow of the $C^1$-vector field $V_{x,\rho_x}$
given in Proposition \ref{thm:constrcampiV-}, we can easily adapt to our setting the the proof of \cite[Theorem 1.1.2]{CD} to obtain the following:
\begin{prop}\label{prop:flussolocale}
For every $x \in \mathfrak M$, the following inequality holds:
\[
\sup\big\{-\mathrm d\mathcal F(x)[V]:V \in \mathcal V^{-}(x),\ \Vert V \Vert_* =1\big\}\leq \big|\mathrm d\mathcal F\big|(x).\qed
\]
\end{prop}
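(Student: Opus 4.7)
The strategy mirrors the standard weak-slope construction (cf.\ \cite[Thm.~1.1.2]{CD}) adapted to the $\mathcal R$-symmetric, boundary-constrained setting of $\mathfrak M$. Given $V \in \mathcal V^-(x)$ with $\Vert V \Vert_* = 1$ and $\eta := -\mathrm d\mathcal F(x)[V] > 0$, the aim is to produce an admissible homotopy establishing $|\mathrm d\mathcal F|(x) \ge \eta - \varepsilon$ for an arbitrary $\varepsilon \in (0,\eta)$, and then to let $\varepsilon \to 0^+$.

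In view of \eqref{eq:dermathcalF}, such a $V$ makes $\mathcal F$ have steepness at least $\eta/2$ at $x$. Apply Proposition~\ref{thm:constrcampiV-} with tolerance $\varepsilon/4$ to obtain $\rho_x,\delta_x>0$ and a $C^1$-vector field $W$ on the $\mathcal R$-invariant neighborhood $\mathcal U_x(\rho_x)$ satisfying (i)--(iv). Let $\overline\exp$ denote the exponential map of the auxiliary metric for which $\partial\Omega$ is totally geodesic, as in the proof of Proposition~\ref{prop:convforte}, and fix a continuous $\mathcal R$-invariant cutoff $\psi\colon\mathfrak M\to[0,1]$ with $\psi \equiv 1$ on $\mathcal U_x(\rho_x/3)$ and $\psi \equiv 0$ outside $\mathcal U_x(\rho_x/2)$. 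Since $\mathcal F(x) > 0$, we may shrink $\rho_x$ so that $\mathcal U_x(\rho_x)\cap\mathfrak C_0 = \emptyset$. Define
\[
h(\tau,z)(s) = \overline\exp_{z(s)}\bigl(\tau\,\psi(z)\,W(z)(s)\bigr),
\]
with the convention $h(\tau,z)=z$ whenever $\psi(z)=0$. The three admissibility conditions \eqref{eq:numero1}--\eqref{eq:numero3} follow, respectively, from the formula at $\tau=0$; from $\psi$ vanishing on $\mathfrak C_0$; and from property (i) together with the $\mathcal R$-invariance of $\psi$ and of $\mathcal U_x(\rho_x)$.

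That $h(\tau,z)\in\mathfrak M$ for $\tau$ in some $[0,\delta]$ uniformly in $z$ is a consequence of property (ii): at the endpoints $s\in\{0,1\}$, $W(z)$ is tangent to $\partial\Omega$, so total geodesicity keeps these points on $\partial\Omega$; interior points of $z$ lying on $\partial\Omega$ move into $\overline\Omega$ by the inward condition; and points already in $\Omega$ remain there by uniform continuity. The distance estimate $d(h(\tau,z),z) \le \tau$ on some $\mathcal U_x(\delta)$ is obtained from $\Vert W(z)\Vert_*=1$ (property (iii)) and a first-order expansion of $\overline\exp$, with any second-order excess absorbed by the rescaling $\tau \mapsto (1-\kappa)\tau$ for a fixed small $\kappa > 0$. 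For the decrease, differentiation at $\tau=0$ combined with \eqref{eq:dermathcalF} and (iv) yields, on $\mathcal U_x(\rho_x/3)$,
\[
\left.\tfrac{\partial}{\partial\tau}\mathcal F\bigl(h(\tau,z)\bigr)\right|_{\tau=0} = 2\int_0^1 g\bigl(\dot z,\Dds W(z)\bigr)\,\mathrm ds \le -\bigl(\eta - \tfrac{\varepsilon}{2}\bigr).
\]
Continuity and compactness, after a further shrinking of $\delta$, upgrade this to $\mathcal F(h(\tau,z)) \le \mathcal F(z) - (\eta - \varepsilon)\tau$ uniformly on $\mathcal U_x(\delta) \times [0,\delta]$, and therefore $|\mathrm d\mathcal F|(x) \ge \eta - \varepsilon$.

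The main obstacle is the simultaneous preservation, uniformly across a neighborhood of $x$, of the three admissibility constraints in $\mathcal H$ together with the pointwise inward condition that keeps curves inside $\overline\Omega$. Without the strict margin $\delta_x$ in (ii)---which makes $W(z)$ inward on the entire tubular strip $\{\phi\circ z \in [-\delta_x,\delta_x]\}$ rather than only on $\partial\Omega$---and the auxiliary totally geodesic metric, one could not uniformly guarantee $h(\tau,z)\in\mathfrak M$; these two ingredients are exactly what make the construction work.
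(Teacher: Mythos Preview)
Your proposal is correct and follows essentially the same route as the paper: both reduce the claim to Proposition~\ref{thm:constrcampiV-} (the local $C^1$ descent field $V_{x,\varepsilon}$) and then build from it an admissible homotopy realizing the required slope, in the spirit of \cite[Theorem~1.1.2]{CD}. The only cosmetic difference is that the paper speaks of the \emph{integral flow} of $V_{x,\rho_x}$, whereas you push along $\overline\exp$ of the auxiliary totally geodesic metric; either device, combined with the strip margin $\delta_x$ from (ii), yields the uniform constraint preservation and the estimates \eqref{eq:limitazione}--\eqref{eq:discesa}.
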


\begin{cor}
If $x$ is a critical curve for $\mathcal F$, then $x$ is a $\mathcal V^{-}$-critical curve. Moreover, if $(x_n)$ is a Palais-Smale sequence for $\mathcal F$,  then it is also a $\mathcal V^{-}$-Palais-Smale sequence, therefore it is strongly convergent (cf Proposition \ref{prop:convforte}).\qed
\end{cor}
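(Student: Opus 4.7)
The proof is a direct consequence of the estimate furnished by Proposition~\ref{prop:flussolocale} together with Proposition~\ref{prop:convforte}; no new analytic input is needed.

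For the first assertion, I would argue as follows. Assume $x$ is a critical curve, i.e.\ $|\mathrm d\mathcal F|(x) = 0$. By Proposition~\ref{prop:flussolocale},
\[
\sup\big\{-\mathrm d\mathcal F(x)[V]\,:\,V\in\mathcal V^-(x),\ \Vert V\Vert_*=1\big\}\le 0,
\]
hence $\mathrm d\mathcal F(x)[V]\ge 0$ for every unit-norm $V\in\mathcal V^-(x)$. Since $\mathcal V^-(x)$ is a positive cone (both conditions \eqref{eq:Vbordo} and \eqref{eq:Venter} are preserved under multiplication by nonnegative scalars), the same inequality extends to every $V\in\mathcal V^-(x)$ by positive homogeneity of $\mathrm d\mathcal F(x)$. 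Recalling \eqref{eq:dermathcalF}, this says
\[
\int_0^1 g\big(\dot x,\Dds V\big)\,\mathrm ds\ge 0,\qquad\forall\,V\in\mathcal V^-(x),
\]
which is precisely the defining property \eqref{defcc} of a $\mathcal V^-$-critical curve.

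For the second assertion, let $(x_n)_n$ be a Palais--Smale sequence for $\mathcal F$ in the strip ${\mathcal F}^{-1}\big([\delta_0^2/K_0^2,M_0^2]\big)$, so \eqref{eq:Fxnbounded} holds and $|\mathrm d\mathcal F|(x_n)\to 0$. Set $\varepsilon_n=|\mathrm d\mathcal F|(x_n)$, so $\varepsilon_n\to 0^+$. Applying Proposition~\ref{prop:flussolocale} at each $x_n$, for every $V_n\in\mathcal V^-(x_n)$ with $\Vert V_n\Vert_*=1$ we get
\[
-\mathrm d\mathcal F(x_n)[V_n]\le |\mathrm d\mathcal F|(x_n)=\varepsilon_n,
\]
which is exactly the condition \eqref{eq:OPS}. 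Hence $(x_n)_n$ is a $\mathcal V^-$-Palais--Smale sequence in the sense of Definition~\ref{def:OpenPS}.

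Finally, since $\overline\Omega$ is compact and $\mathcal F(x_n)$ is bounded, the sequence $(x_n)$ is bounded in $H^1$ and therefore admits a subsequence weakly converging in $H^1$ (and uniformly) to some $x\in\mathfrak M$. Proposition~\ref{prop:convforte} then upgrades this weak convergence to strong $H^1$-convergence. The only step requiring attention is ensuring that we truly land in the hypothesis of Proposition~\ref{prop:convforte}, but this is immediate from the $\mathcal V^-$-Palais--Smale property just proved together with the energy bound \eqref{eq:Fxnbounded}. No subtlety is hidden here: all the analytic work has already been done in Sections~\ref{sec:critical curves} and~\ref{sec:limitcriticalcurves}; the corollary is merely the bridge translating statements about the (abstract) weak slope $|\mathrm d\mathcal F|$ into statements about the (concrete) cone $\mathcal V^-(x)$.
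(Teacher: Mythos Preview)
Your proof is correct and matches the paper's approach exactly: the paper marks this corollary with a \qed\ immediately after the statement, indicating it is a direct consequence of Proposition~\ref{prop:flussolocale} (and Proposition~\ref{prop:convforte} for the convergence), which is precisely what you have spelled out in detail.
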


For $a \in \mathds{R}$, denote by $\mathcal F^a$ the closed $a$-sublevel of $\mathcal F$:
\[
\mathcal F^a=\big\{x \in \mathfrak M: \mathcal F(x) \leq a\big\}.
\]
Using the deformation results proved in \cite{CD,CDM}, we now have:
 
\begin{prop}\label{teo:deformation1}
Suppose that  $c \in\big[\delta_0^2/{K_0^2},M_0^2\big]$ is not a critical value for $\mathcal F$. Then, there exists $\varepsilon=\varepsilon(c) > 0$ and a homotopy $\eta \in \mathcal H$ such that
\[
\eta(1,\mathcal F^{c+\epsilon}) \subset \mathcal F^{c-\epsilon}.\qed
\]
\end{prop}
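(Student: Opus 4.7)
The plan is to follow the standard deformation scheme from the weak slope theory of Corvellec--Degiovanni--Marzocchi \cite{CD,CDM}, adapted to the present $\mathcal R$--equivariant setting.

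\textbf{Step 1: Uniform lower bound on the weak slope.} First I would argue that there exist $\varepsilon>0$ and $\mu>0$ with
\[
|\mathrm d\mathcal F|(x)\geq \mu\quad\text{for every } x\in\mathcal F^{-1}([c-2\varepsilon,c+2\varepsilon]).
\]
If this failed, one could extract a sequence $(x_n)$ with $\mathcal F(x_n)\to c$ and $|\mathrm d\mathcal F|(x_n)\to 0$, i.e.\ a Palais--Smale sequence in the strip $\mathcal F^{-1}([\delta_0^2/K_0^2, M_0^2])$. By the Corollary to Proposition~\ref{prop:flussolocale} it would also be a $\mathcal V^-$--Palais-Smale sequence, and Proposition~\ref{prop:convforte} would then produce a strongly convergent subsequence whose limit $x_\infty$ is a critical curve with $\mathcal F(x_\infty)=c$, contradicting the assumption that $c$ is not a critical value.

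\textbf{Step 2: Construction of a local descent vector field.} For each $x$ in the strip, Proposition~\ref{prop:flussolocale} gives a direction $V_x\in\mathcal V^-(x)$ of $\mu$--steep descent (up to replacing $\mu$ by $\mu/2$). Then Proposition~\ref{thm:constrcampiV-}, applied with, say, $\varepsilon=\mu/4$, furnishes radii $\rho_x,\delta_x>0$ and a $C^1$, $\mathcal R$--equivariant vector field $V_x(\cdot)$ defined on $\mathcal U_x(\rho_x)$, with $V_x(z)\in\mathcal V^-_{\delta_x}(z)$, $\|V_x(z)\|_*=1$ and $\mathrm d\mathcal F(z)[V_x(z)]\leq -\mu/2$.

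\textbf{Step 3: Gluing.} By paracompactness of the metric space $\mathfrak M$, from the open cover $\{\mathcal U_x(\rho_x)\}_x$ of $\mathcal F^{-1}([c-\varepsilon,c+\varepsilon])$ I would extract a locally finite refinement and build a partition of unity $\{\psi_i\}$ subordinate to it. Symmetrizing each $\psi_i$ via $\frac12(\psi_i+\psi_i\circ\mathcal R)$ and using the $\mathcal R$--invariance of each $\mathcal U_x(\rho_x)$, the partition can be taken $\mathcal R$--invariant. Then
\[
W(z)=\sum_i \psi_i(z)\,V_{x_i}(z)
\]
is a well-defined $C^1$, $\mathcal R$--equivariant vector field on a neighborhood of $\mathcal F^{-1}([c-\varepsilon,c+\varepsilon])$. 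Convexity of $\mathcal V^-_{\delta}(z)$ in the variable $V$ shows $W(z)$ still points inward at boundary contacts, and linearity of $\mathrm d\mathcal F(z)$ gives $\mathrm d\mathcal F(z)[W(z)]\leq -\mu/2+o(1)$. Multiplying $W$ by a $\mathcal R$--invariant cutoff function supported in a neighborhood of $\mathcal F^{-1}([c-\varepsilon,c+\varepsilon])$ and vanishing outside a slightly larger neighborhood (in particular vanishing on $\mathfrak C_0$, which lies in $\mathcal F^{-1}(0)$ for $\varepsilon<c$), I obtain a globally defined field on $\mathfrak M$.

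\textbf{Step 4: Integrating the flow.} This is where the main technical obstacle lies: the naive flow of $W$ in the ambient Hilbert space might leave $\mathfrak M$ because the curves must remain in $\overline\Omega$. I would define the flow through the auxiliary exponential map $\overline\exp$ used in Proposition~\ref{prop:convforte}, relative to the metric making $\partial\Omega$ totally geodesic: set $\Phi(\tau,z)(s)=\overline\exp_{z(s)}(\tau W(z)(s))$ for small $\tau$, and iterate. Since $W(z)(s)$ is tangent to $\partial\Omega$ whenever $z(s)\in\partial\Omega$ with $\phi(z(s))$ near $0$ (from $W(z)\in\mathcal V^-_\delta(z)$) and $\partial\Omega$ is totally geodesic for $\overline g$, the flow preserves $\overline\Omega$; $\mathcal R$--equivariance of $W$ makes $\Phi$ $\mathcal R$--equivariant; and the cutoff makes it the identity on $\mathfrak C_0$. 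Standard estimates compute
\[
\mathcal F\big(\Phi(\tau,z)\big)-\mathcal F(z)=\tau\,\mathrm d\mathcal F(z)[W(z)]+o(\tau)\leq -\tfrac{\mu}{4}\tau
\]
uniformly, so rescaling the time parameter and choosing $\varepsilon>0$ small enough that $\frac{\mu}{4}\cdot T\geq 2\varepsilon$ for the total flow time $T$, one obtains $\eta(\tau,z)=\Phi(\tau T,z)$ satisfying $\eta(1,\mathcal F^{c+\varepsilon})\subset\mathcal F^{c-\varepsilon}$ and $\eta\in\mathcal H$.

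The hardest part is Step 4, reconciling the linear/Hilbert structure used in the construction of $W$ with the obstacle constraint $z(s)\in\overline\Omega$; this is precisely why the totally geodesic metric $\overline g$ (already exploited in Proposition~\ref{prop:convforte}) plays such a central role, providing a flow that automatically respects $\mathfrak M$ whenever the driving vector field belongs to $\mathcal V^-$.
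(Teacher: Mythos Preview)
The paper does not give its own proof here; the proposition is stated with a bare \qed\ after the sentence ``Using the deformation results proved in \cite{CD,CDM}, we now have''. In other words, once the Palais--Smale condition (Proposition~\ref{prop:convforte} and its Corollary) and the lower bound on the weak slope (Proposition~\ref{prop:flussolocale}) are in place, the paper simply invokes the abstract deformation theory for continuous functionals on metric spaces from \cite{CD,CDM}. Your sketch is a correct and reasonable unpacking of what that citation amounts to in this $\mathcal R$--equivariant, obstacle-constrained setting; Steps~1--3 are exactly the ingredients the paper has prepared.

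One small imprecision in Step~4: you write that $W(z)(s)$ is \emph{tangent} to $\partial\Omega$ whenever $z(s)\in\partial\Omega$, but for $s\in\,]0,1[$ membership in $\mathcal V^-_\delta(z)$ only gives $g\big(\nabla\phi(z(s)),W(z)(s)\big)\leq 0$, i.e.\ inward-pointing or tangent. This is harmless (inward is even better for remaining in $\overline\Omega$), but the argument should distinguish the two cases: at the endpoints $s=0,1$ tangency is exact and total geodesicity of $\partial\Omega$ for $\overline g$ keeps $\Phi(\tau,z)(0),\Phi(\tau,z)(1)\in\partial\Omega$; at interior boundary contacts the $\overline g$-geodesic in an inward direction enters $\Omega$ for small $\tau$, and the uniform $\delta$-margin in $\mathcal V^-_\delta$ is what lets you iterate.
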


\subsection{Two elementary lemmas on homotopies on the sphere}
In preparation for the final result of this section, we give a statement and a short proof of two auxiliary results concerning homotopies on spheres.
Let $g$ be a Riemannian metric on $\mathbb S^m$, ($m=N-1$ for our applications), and let $\delta_g>0$ be the minimum of the injectivity radius function of $(\mathbb S^m,g)$. Here, $\mathrm{dist}$ will denote the distance function on $\mathbb S^m$ induced by $g$.
\begin{lem}\label{thm:elem1}
	Let $\mathcal C\subset\mathbb S^m\times\mathbb S^m$ be a closed set such that:
	\[0<\alpha\le\min_{(A,B)\in\mathcal C}\mathrm{dist}(A,B)<\delta_g.\]
	There exists a continuous map $\mathfrak s^\mathcal C\colon\big[0,\delta_g-\alpha\big]\times\mathcal C\to\mathbb S^m$, such that, denoting $\mathfrak s^\mathcal C(\tau,\cdot)=\mathfrak s_\tau^\mathcal C$:
	\begin{enumerate}
		\item\label{itm:pao1} $\mathfrak s^\mathcal C_0(A,B)=B$ for all $(A,B)\in\mathcal C$;
		\item\label{itm:pao2} $\tau\mapsto\mathrm{dist}\big(A,\mathfrak s^\mathcal C_\tau(A,B)\big)$ is nondecreasing for all $(A,B)\in\mathcal C$;
		\item\label{itm:pao3} $\mathfrak s^\mathcal C_\tau(A,B)=B$ for all $(A,B)\in\mathcal C$ with $\mathrm{dist}(A,B)\ge\delta_g$;
		\item\label{itm:pao4} $\min\limits_{(A,B)\in\mathcal C}\mathrm{dist}\big(A,\mathfrak s^\mathcal C_{\delta_g-\alpha}(A,B)\big)\ge\delta_g$, for all $(A,B)\in\mathcal C$.
	\end{enumerate}
\end{lem}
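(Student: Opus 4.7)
The plan is to define $\mathfrak s^{\mathcal C}$ explicitly by pushing $B$ along the unit-speed geodesic issuing from $A$ through $B$ whenever $\mathrm{dist}(A,B)<\delta_g$, and by leaving $B$ fixed otherwise. Concretely, if $d:=\mathrm{dist}(A,B)<\delta_g$, then $v:=\exp_A^{-1}(B)\in T_A\mathbb S^m$ is well defined with $|v|=d$, and the curve $\gamma_{A,B}(t):=\exp_A\bigl(t\,v/|v|\bigr)$ is a unit-speed geodesic with $\gamma_{A,B}(d)=B$ and $\mathrm{dist}\bigl(A,\gamma_{A,B}(t)\bigr)=t$ for every $t\in[0,\delta_g]$, since $\delta_g\le\mathrm{inj}(A)$. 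I would then define
\[
\mathfrak s^{\mathcal C}_\tau(A,B)=\begin{cases}\gamma_{A,B}\bigl(\min\{d+\tau,\delta_g\}\bigr),&\text{if }d<\delta_g,\\ B,&\text{if }d\ge\delta_g.\end{cases}
\]

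With this definition the four properties are almost immediate. Items (1) and (3) hold by inspection; item (2) holds because $\tau\mapsto\min\{d+\tau,\delta_g\}$ is nondecreasing and coincides with $\mathrm{dist}\bigl(A,\mathfrak s^{\mathcal C}_\tau(A,B)\bigr)$ in the first branch, while the distance is constantly equal to $d$ in the second. Item (4) uses the hypothesis $d\ge\alpha$: at $\tau=\delta_g-\alpha$ one has $d+\tau\ge\delta_g$, so the minimum equals $\delta_g$ and hence $\mathrm{dist}\bigl(A,\mathfrak s^{\mathcal C}_{\delta_g-\alpha}(A,B)\bigr)=\delta_g$ in the first branch, while it is already $\ge\delta_g$ in the second.

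The only delicate point is continuity, which I would handle by separating regimes. On the open set $\{d<\delta_g\}\cap\mathcal C$ the map $(A,B)\mapsto\exp_A^{-1}(B)$ is smooth (because we stay strictly inside the injectivity ball) and the formula above is a continuous composition with the exponential map and the continuous truncation $\min\{\,\cdot\,,\delta_g\}$. On the closed set $\{d\ge\delta_g\}\cap\mathcal C$ the map is simply the projection $(A,B)\mapsto B$. The only genuinely nontrivial check is gluing the two branches along the interface $d=\delta_g$: if $(A_n,B_n)\to(A,B)$ with $d_n:=d(A_n,B_n)<\delta_g$ and $d(A,B)=\delta_g$, then for $n$ large $\min\{d_n+\tau,\delta_g\}=\delta_g$, so setting $v_n:=\exp_{A_n}^{-1}(B_n)$ and $v:=\exp_A^{-1}(B)$ we have $v_n/|v_n|\to v/|v|$, and therefore
\[
\mathfrak s^{\mathcal C}_\tau(A_n,B_n)=\exp_{A_n}\bigl(\delta_g\,v_n/|v_n|\bigr)\longrightarrow\exp_A\bigl(\delta_g\,v/|v|\bigr)=\exp_A(v)=B,
\]
because $|v|=\delta_g$. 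This gluing check at $d=\delta_g$ is the only step that requires care; everything else is bookkeeping.
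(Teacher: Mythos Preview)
Your construction is essentially the paper's: push $B$ along the geodesic ray from $A$ through $B$ until it sits at distance $\delta_g$ from $A$. The only difference is the parameterization---the paper sets $\mathfrak s^{\mathcal C}_\tau(A,B)=\gamma_{A,B}\bigl(\tau\cdot\tfrac{d-\delta_g}{\delta_g-\alpha}\bigr)$ with $\gamma_{A,B}(0)=B$ heading toward $A$ (so the argument is $\le 0$), which makes the gluing at $d=\delta_g$ automatic since the argument vanishes there; your truncation $\min\{d+\tau,\delta_g\}$ works just as well but needs the explicit boundary check you carry out.

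One small wrinkle in that check: the step $v_n/|v_n|\to v/|v|$ presumes $v=\exp_A^{-1}(B)$ is well defined when $\mathrm{dist}(A,B)=\delta_g$, which may fail at a point $A$ with $\mathrm{inj}(A)=\delta_g$. A cleaner fix, avoiding any reference to $v$, is to note directly that $\mathrm{dist}\bigl(\mathfrak s^{\mathcal C}_\tau(A_n,B_n),B_n\bigr)\le\bigl|\min\{d_n+\tau,\delta_g\}-d_n\bigr|\le\delta_g-d_n\to 0$, whence $\mathfrak s^{\mathcal C}_\tau(A_n,B_n)\to B$.
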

\begin{proof}
For all $(A,B)\in\mathcal C$ with $\mathrm{dist}(A,B)<\delta_g$, denote by $\gamma_{A,B}$ the minimal unit speed $g$-geodesic in $\mathbb S^m$ from $B$ to $A$, and with $\gamma_{A,B}(0)=B$. For $(A,B)\in\mathcal C$, one can define:
\[\mathfrak s^\mathcal C_\tau(A,B)=
\begin{cases}B,&\text{if $\mathrm{dist}(A,B)\ge\delta_g$;}\\
\gamma_{A,B}\left(\tau\cdot\frac{\mathrm{dist}(A,B)-\delta_g}{\delta_g-\alpha}\right),&\text{if $\mathrm{dist}(A,B)<\delta_g$.}
\end{cases}
\]
The continuity of $\mathfrak s^\mathcal C$ and properties \eqref{itm:pao1}--\eqref{itm:pao4} are easily verified.
	\end{proof}
The continuous map $[0,\delta_g-\alpha]\times\mathcal C\ni\big(\tau,(A,B)\big)\mapsto\big(A,\mathfrak s^\mathcal C_\tau(A,B)\big)\in\mathbb S^m\times\mathbb S^m$ will be called the \emph{endpoint separating homotopy} of the set $\mathcal C$.
	\begin{lem}\label{thm:elem2}
		Let $\mathcal C\subset\mathbb S^m\times\mathbb S^m$ be a closed set, and let $h_1\colon[0,1]\times\mathcal C\to\mathbb S^m$ be a continuous map. There exists $\tau_0>0$ such that, for all $(A,B)\in\mathcal C$ and all $E\in\mathbb S^m$ for which:\[\mathrm{dist}\big(h_1(a,(A,B)),E\big)\ge\delta_g,\quad\text{and}\quad\mathrm{dist}\big(h_1(b,(A,B)),E\big)\le\tfrac12\delta_g,\]
		for some $0\le a<b\le1$, then $b-a\ge\tau_0$.
	\end{lem}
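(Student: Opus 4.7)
The plan is to use the uniform continuity of $h_1$ on the compact domain $[0,1]\times\mathcal{C}$. Since $\mathbb{S}^m\times\mathbb{S}^m$ is compact and $\mathcal{C}$ is closed in it, $\mathcal{C}$ is compact, hence $[0,1]\times\mathcal{C}$ is a compact metric space. The continuous map $h_1$ is therefore uniformly continuous into $(\mathbb{S}^m,\mathrm{dist})$.

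First I would fix $\tau_0>0$ with the following uniform continuity property: whenever $|s-t|<\tau_0$, one has $\mathrm{dist}\bigl(h_1(s,(A,B)),h_1(t,(A,B))\bigr)<\tfrac{1}{2}\delta_g$ for every $(A,B)\in\mathcal{C}$. Such a $\tau_0$ exists because uniform continuity applied with the target tolerance $\tfrac{1}{2}\delta_g$ produces a modulus that is uniform over all $(A,B)\in\mathcal{C}$; one only needs to take the modulus of continuity corresponding to pairs $((s,(A,B)),(t,(A,B)))$ that differ only in the first coordinate.

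Then I argue by contradiction. Suppose there exist $(A,B)\in\mathcal{C}$, $E\in\mathbb{S}^m$, and $0\le a<b\le 1$ with $b-a<\tau_0$ such that $\mathrm{dist}\bigl(h_1(a,(A,B)),E\bigr)\ge\delta_g$ and $\mathrm{dist}\bigl(h_1(b,(A,B)),E\bigr)\le\tfrac{1}{2}\delta_g$. By the choice of $\tau_0$, $\mathrm{dist}\bigl(h_1(a,(A,B)),h_1(b,(A,B))\bigr)<\tfrac{1}{2}\delta_g$, and the triangle inequality then yields
\[
\mathrm{dist}\bigl(h_1(a,(A,B)),E\bigr)\le\mathrm{dist}\bigl(h_1(a,(A,B)),h_1(b,(A,B))\bigr)+\mathrm{dist}\bigl(h_1(b,(A,B)),E\bigr)<\delta_g,
\]
contradicting the hypothesis $\mathrm{dist}(h_1(a,(A,B)),E)\ge\delta_g$. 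This forces $b-a\ge\tau_0$, which is the desired conclusion.

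There is essentially no serious obstacle: the result is a clean consequence of compactness plus the triangle inequality, and the factor $\tfrac{1}{2}$ in the hypothesis is precisely what gives room to absorb the uniform modulus of continuity. The only mild care needed is to phrase the uniform continuity in terms that make $\tau_0$ depend solely on $h_1$ and $\delta_g$, not on the point $E$ or on $(A,B)$; this is automatic since $E$ does not enter the modulus at all.
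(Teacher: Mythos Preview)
Your proof is correct and follows exactly the approach indicated in the paper, which simply states that the result follows from the uniform continuity of $h_1$ and a compactness argument. You have supplied the routine details that the paper omits.
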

	\begin{proof}
		It follows immediately from the uniform continuity of $h_1$ and a compactness argument.
		\end{proof}
\subsection{The deformation lemma for critical levels}\label{sub:2nddeflem}
Assume that the number of OGC's  is finite (otherwise our main result does not need a proof!), and let us denote them by $\gamma_1,\ldots,\gamma_k$. Fix $r_*> 0$ such that 
\begin{itemize}
\item the sets  $\big\{y \in \partial \Omega : \dist(y,\gamma_i(0)) < r_* \big\}$ and  $\big\{y \in \partial \Omega : \dist(y,\gamma_i(1)) < r_*\big\}$ are  contractible in $\Omega$ for all $i$ and pairwise disjoint;\smallskip

\item $B(\gamma_i,r_*) \cap B(\gamma_j,r_*) = \emptyset$ for all $i\not=j$;\smallskip

\item $B(\gamma_i,r_*) \cap B(\mathcal R \gamma_i,r_*)=\emptyset$ for all $i$ (note that $\gamma_i\ne\mathcal R\gamma_i$ for all $i$).
\end{itemize}
For every $r \in \left]0,r_*\right]$ set 
 
\begin{equation}\label{eq:defOr}
\mathcal O_r=\bigcup_{i=1}^k \mathcal U_{\gamma_i}(r),
\end{equation}
(cf \eqref{eq:defUxrho}). 
As in Proposition \ref{teo:deformation1}, using the weak slope theory we have:

\begin{prop}\label{teo:deformation2}
Let  $c \in \big[{\delta_0^2}/{k_0^2},M_0^2\big]$ be a critical value for $\mathcal F$. Then there exists $\varepsilon(r_*)>0$ and a homotopy $\eta \in \mathcal H$ such that 
\[
\eta\big(1,\mathcal F^{c+\epsilon(r_*)} \setminus \mathcal O_{r_*}\big) \subset \mathcal F^{c-\epsilon(r_*)}.\qed
\]
\end{prop}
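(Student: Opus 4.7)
The plan is to reduce this to the abstract deformation machinery of the weak slope theory (as in Proposition~\ref{teo:deformation1}) by first establishing a uniform lower bound on $|\mathrm d\mathcal F|$ outside the finite union $\mathcal O_{r_*}$ at heights near $c$, and then patching together the local descent fields supplied by Proposition~\ref{thm:constrcampiV-}.

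\textbf{Step 1 (uniform weak-slope bound off $\mathcal O_{r_*}$).} I will argue by contradiction that there exist $\varepsilon_0>0$ and $\mu>0$ such that
\[
|\mathrm d\mathcal F|(x)\ge\mu\qquad\text{for every }x\in\mathcal F^{-1}\big([c-\varepsilon_0,c+\varepsilon_0]\big)\setminus\mathcal O_{r_*/2}.
\]
If this failed, one would obtain a sequence $(x_n)$ with $x_n\notin\mathcal O_{r_*/2}$, $\mathcal F(x_n)\to c$ and $|\mathrm d\mathcal F|(x_n)\to0$. By the corollary following Proposition~\ref{prop:flussolocale} this is a $\mathcal V^-$--Palais–Smale sequence; by Proposition~\ref{prop:convforte} (after extracting the weak $H^1$-limit guaranteed by compactness of $\overline\Omega$ and boundedness of $\mathcal F(x_n)$) it converges strongly in $H^1$ to some $x\in\mathfrak M$. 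Lower semicontinuity of $|\mathrm d\mathcal F|$ forces $x$ to be a critical curve at the positive level $c$, hence (Proposition~\ref{prop:crit=OGC}) an OGC. So $x$ coincides with some $\gamma_i$ or $\mathcal R\gamma_i$; but then $x_n\in\mathcal U_{\gamma_i}(r_*/2)\subset\mathcal O_{r_*/2}$ eventually, a contradiction.

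\textbf{Step 2 (local $\mathcal R$-equivariant descent fields).} At each $x\in\mathcal F^{-1}([c-\varepsilon_0,c+\varepsilon_0])\setminus\mathcal O_{r_*}$, Proposition~\ref{prop:flussolocale} produces $V_x\in\mathcal V^-(x)$ with $\|V_x\|_*=1$ and $-\mathrm d\mathcal F(x)[V_x]\ge\mu/2$, so $\mathcal F$ has steepness $\ge\mu/2$ at $x$. Proposition~\ref{thm:constrcampiV-} applied with $\varepsilon=\mu/4$ gives an $\mathcal R$-equivariant $C^1$-field $V^{(x)}$ on $\mathcal U_x(\rho_x)$ satisfying $V^{(x)}(z)\in\mathcal V^-_{\delta_x}(z)$, $\|V^{(x)}(z)\|_*=1$, and
\[
\int_0^1 g\big(\dot z,\tfrac{\mathrm D}{\mathrm ds}V^{(x)}(z)\big)\,\mathrm ds\le -\mu/4.
\]
I will shrink each $\rho_x$ so that $\mathcal U_x(\rho_x)$ stays inside $\mathfrak M\setminus\overline{\mathcal O_{r_*/2}}$ and inside the strip $\mathcal F^{-1}((c-\varepsilon_0,c+\varepsilon_0))$.

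\textbf{Step 3 (globalization and flow).} The cover $\{\mathcal U_x(\rho_x/2)\}$ of the (metrizable) closed set $K=\mathcal F^{-1}([c-\varepsilon_0/2,c+\varepsilon_0/2])\setminus\mathcal O_{r_*}$ admits an $\mathcal R$-equivariant locally finite refinement with a Lipschitz partition of unity $\{\psi_\alpha\}$ with $\psi_\alpha\circ\mathcal R=\psi_\alpha$. Because $\mathcal V^-_\delta(z)$ is convex, the convex combination $W(z)=\sum_\alpha\psi_\alpha(z)V^{(x_\alpha)}(z)$ is again in $\mathcal V^-_\delta(z)$, satisfies $\|W(z)\|_*\le1$, inherits the bound $\int_0^1g(\dot z,\tfrac{\mathrm D}{\mathrm ds}W(z))\,\mathrm ds\le-\mu/4$, and is $\mathcal R$-equivariant. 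I multiply $W$ by a Lipschitz cutoff $\chi$ equal to $1$ on $\mathcal F^{-1}([c-\varepsilon,c+\varepsilon])\setminus\mathcal O_{r_*}$ and vanishing outside its domain (and in particular near $\mathfrak C_0$, where $\mathcal F\equiv0\ll c$). For $\varepsilon$ small enough that $8\varepsilon/\mu$ is less than the exit time of the flow from the strip, the integral flow of $\chi W$ (followed, if necessary, by the standard retraction on $\mathfrak M$ built from $\mathbf r$ and the totally-geodesic auxiliary metric used in the proof of Proposition~\ref{prop:convforte}) yields a continuous homotopy $\eta\in\mathcal H$ with $\eta(1,\mathcal F^{c+\varepsilon}\setminus\mathcal O_{r_*})\subset\mathcal F^{c-\varepsilon}$.

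\textbf{Main obstacle.} The most delicate point is Step~3: producing an $\mathcal R$-equivariant patched vector field whose flow genuinely stays in $\mathfrak M$ (so that the $\eta$ we build is an admissible homotopy rather than merely a curve in the ambient Hilbert manifold) while preserving the convex cone constraints $\mathcal V^-_\delta$ along the flow — the projection/retraction back to $\mathfrak M$ must not destroy either the $\mathcal R$-equivariance or the uniform descent rate. Everything else is bookkeeping; the technical core is arranging this compatibility.
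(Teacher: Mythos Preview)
The paper does not actually prove this proposition: it simply writes ``As in Proposition~\ref{teo:deformation1}, using the weak slope theory we have'' and appends a \qed, deferring entirely to the abstract deformation results of \cite{CD,CDM}. Your sketch is a correct unpacking of what that abstract machinery does in the present setting, and your Step~1 (the contradiction argument giving a uniform lower bound on $|\mathrm d\mathcal F|$ off $\mathcal O_{r_*/2}$ via Palais--Smale plus Proposition~\ref{prop:crit=OGC}) is exactly the input the cited theory needs. So your proposal and the paper's ``proof'' agree in substance; you are just more explicit.

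Two small remarks. First, your invocation of Proposition~\ref{prop:crit=OGC} in Step~1 tacitly uses the no O--T chord hypothesis of Theorem~\ref{thm:main}; this is legitimate because Section~\ref{sub:2nddeflem} is written under the standing assumptions of the main theorem, but it is worth saying. Second, the obstacle you flag in Step~3 is real but not severe: the condition $W(z)\in\mathcal V^-_{\delta}(z)$ (with $\delta>0$, not merely $\mathcal V^-(z)$) is precisely what forces $\phi\big(\eta(\tau,z)(s)\big)$ to be nonincreasing once it enters the strip $[-\delta,0]$, so the flow stays in $\mathfrak M$ without any a posteriori retraction; the endpoint constraint is preserved because $W(z)(0),W(z)(1)\in T\partial\Omega$. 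The $\mathcal R$-equivariance and the fixing of $\mathfrak C_0$ follow from Proposition~\ref{thm:constrcampiV-}(i) together with the $\mathcal R$-invariant partition of unity and the cutoff supported away from level $0$.
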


Now let $\Dgot$ denote the family of all closed $\Rcal$--invariant subset of $\mathfrak C$. 
Given $\mathcal D\in\Dgot$ and $h \in \mathcal H$, we denote by $\mathcal A_h$ the $\mathcal R$-invariant set:
\begin{equation}\label{eq:Ah}
{\mathcal A}_h=\big\{x \in \mathcal D: h(1,x)\in \mathcal O_{r_*}\big\}.
\end{equation}

For the minimax theory we will need the following:
\begin{prop}\label{teo:deformation6}
Fix $\mathcal D\in\Dgot$, $h\in\mathcal H$, and let $\mathcal A_h$ be as in \eqref{eq:Ah}. Then, there exists a continuous map \[k_*\colon [0,1] \times \mathcal A_h \longrightarrow \mathfrak C \setminus \mathfrak C_0,\] such that:
\begin{itemize}
\item[(a)] $k_*(0,x)=x$ for every $x \in \mathcal A_h$;
\item[(b)] $k_*(\tau,\mathcal R x)=\mathcal R k_*(\tau,x)$ for all $\tau \in [0,1]$, for all $x \in \mathcal A_h$;
\item[(c)] $k_*(1,\mathcal A_h)=\{y_0,Ry_0\}$ for some $y_0 \in \mathfrak C \setminus \mathfrak C_0$.
\end{itemize}
\end{prop}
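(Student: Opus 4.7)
My plan is to decompose $\mathcal A_h$ into clopen $\mathcal R$-invariant pieces labelled by which of the pairwise disjoint balls $B(\gamma_i,r_*)$, $B(\mathcal R\gamma_i,r_*)$ contains $h(1,x)$, deform each piece separately to a canonical pair of chords in $\mathfrak C\setminus\mathfrak C_0$, and finally concatenate with paths in $\mathfrak C\setminus\mathfrak C_0$ to drive every piece into one common $\mathcal R$-pair $\{y_0,\mathcal R y_0\}$. After shrinking $r_*$ if necessary, the endpoints of any curve in $B(\gamma_i,r_*)$ lie in small disjoint contractible neighborhoods $U_i^0\ni\gamma_i(0)$ and $U_i^1\ni\gamma_i(1)$ of $\partial\Omega\cong\mathbb S^{N-1}$. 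Setting
\[
\mathcal A_h^{i,+}=\big\{x\in\mathcal A_h:h(1,x)\in B(\gamma_i,r_*)\big\},\qquad \mathcal A_h^{i,-}=\mathcal R(\mathcal A_h^{i,+}),
\]
gives such a partition into clopen pairwise disjoint subsets, with $\mathcal R$ swapping $+$ and $-$.

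On $\mathcal A_h^{i,+}$ I would build $k_*$ in two sub-phases. First, move the endpoint pair $(x(0),x(1))$ along the trajectory $\tau\mapsto(h(\tau,x)(0),h(\tau,x)(1))$ in $\partial\Omega\times\partial\Omega$, landing inside $U_i^0\times U_i^1$. Second, inside $U_i^0\times U_i^1$ (which is disjoint from the diagonal $\Delta\subset\partial\Omega\times\partial\Omega$), use the contractibility of $U_i^0$ and $U_i^1$ to collapse to $(\gamma_i(0),\gamma_i(1))$. Applying $\gamma(\cdot,\cdot)$ from Lemma~\ref{thm:corde} to the resulting endpoint paths then yields the desired curves in $\mathfrak C$; the extension to $\mathcal A_h^{i,-}$ is prescribed by the $\mathcal R$-equivariance relation $k_*(\tau,\mathcal R x)=\mathcal R k_*(\tau,x)$. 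After these sub-phases every $x\in\mathcal A_h^{i,\pm}$ will have been driven to the pair $\{\gamma(\gamma_i(0),\gamma_i(1)),\gamma(\gamma_i(1),\gamma_i(0))\}$. Finally, setting $y_0:=\gamma(\gamma_1(0),\gamma_1(1))$ and using the path-connectedness of the manifold $(\mathbb S^{N-1}\times\mathbb S^{N-1})\setminus\Delta$ (which is connected for $N\ge 2$; the case $N=1$ is trivial), for each $i\ge 2$ I would choose a continuous path in $\mathfrak C\setminus\mathfrak C_0$ from $\gamma(\gamma_i(0),\gamma_i(1))$ to $y_0$ and extend $\mathcal R$-equivariantly, thereby merging the $k$ intermediate $\mathcal R$-pairs into $\{y_0,\mathcal R y_0\}$.

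The decisive obstacle lies in the first sub-phase: the trajectory $\tau\mapsto(h(\tau,x)(0),h(\tau,x)(1))$ may well cross $\Delta$, which would momentarily push $k_*(\tau,x)$ into $\mathfrak C_0$ and break the conclusion. This is exactly what Lemmas~\ref{thm:elem1} and~\ref{thm:elem2} are designed to address: Lemma~\ref{thm:elem2}, applied to the endpoint maps of $h$ on suitable compact subsets, yields a uniform lower bound $\tau_0>0$ below which the two endpoints cannot be driven from separation $\ge\delta_g$ to separation $\le\delta_g/2$; Lemma~\ref{thm:elem1}, applied to the closed set of problematic endpoint pairs (those with separation below $\delta_g$), provides a preparatory endpoint-separating homotopy pushing them apart to separation at least $\delta_g$ in a time shorter than $\tau_0$. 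Interleaving this preparatory separation with the homotopy induced by $h$ ensures that the two endpoints stay at positive distance throughout, so that $k_*$ indeed takes values in $\mathfrak C\setminus\mathfrak C_0$ for all $(\tau,x)$; continuity in $x$ is automatic because the decomposition into $\mathcal A_h^{i,\pm}$ is clopen.
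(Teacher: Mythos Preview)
Your overall strategy coincides with the paper's: pass to endpoint pairs in $\partial\Omega\times\partial\Omega\cong\mathfrak C$, and avoid the diagonal by interleaving the main contraction with the endpoint-separating homotopy of Lemma~\ref{thm:elem1}, using Lemma~\ref{thm:elem2} to guarantee termination in finitely many steps. The organization differs only superficially---you split $\mathcal A_h$ into the clopen pieces $\mathcal A_h^{i,\pm}$ first and merge them at the end via the connectedness of $(\mathbb S^{N-1}\times\mathbb S^{N-1})\setminus\Delta$, whereas the paper first builds a crude contraction $h_*$ of $\mathcal A_h$ onto $\{y_0,\mathcal R y_0\}$ inside $\mathfrak C$ (possibly through $\mathfrak C_0$) and then splits along the fibers $F_1=h_*(1,\cdot)^{-1}(y_0)$, $F_2=\mathcal R F_1$. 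These are equivalent bookkeepings.

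There is, however, a real imprecision in your interleaving step. You propose to drive \emph{both} coordinates along $\tau\mapsto\big(h(\tau,x)(0),h(\tau,x)(1)\big)$ and insert separation phases whenever they get close. But the separating homotopy of Lemma~\ref{thm:elem1} moves only the second coordinate; once it has acted, that coordinate has left the $h$-trajectory, and ``resuming $h$'' on the pair cannot be done without a discontinuous jump of the second coordinate back to $h(\tau,x)(1)$. If instead you freeze the second coordinate during the $h$-phases (the only way to keep the interleaving continuous), then at the end the first coordinate sits in $U_i^0$ while the second sits merely at distance $\ge\tfrac12\delta_g$ from it---\emph{not} in $U_i^1$---so your second sub-phase, which assumes the pair already lies in $U_i^0\times U_i^1$, does not apply. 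The paper's fix is precisely this deliberate asymmetry: run only the \emph{first} coordinate through the contraction $h^1$ (interleaving separation on the second), so that at the final time the first coordinate is the single point $\gamma_1(0)$; the second coordinate then lies in the complement $W$ of a $\tfrac12\delta_g$-disk about $\gamma_1(0)$, and a last step contracts this (contractible) set $W$ to $\gamma_1(1)$. Replace your two sub-phases by this one-sided procedure followed by the contraction of $W$, and your argument becomes correct.
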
 
\begin{proof}
The first observation is that a homotopy that satisfies (a), (b) and (c) above, but taking values in $\mathfrak C$ (i.e., possibly having points of $\mathfrak C_0$ in its image), does exist. This follows from the fact that, by definition, the homotopy $h$ carries $\mathcal A_h$ 
into the set $\mathcal O_{r_*}$. For $r_*>0$ small enough, this set is retractible in $\mathfrak M$ onto the finite set 
$\big\{\gamma_1,\ldots,\gamma_k,\mathcal R\gamma_1,\ldots,\mathcal R\gamma_k\big\}$, and again this finite set is retractible
in $\mathfrak M$, say, to the two-point set $\big\{\gamma_1,\mathcal R\gamma_1\big\}$. Composing with the \emph{endpoints mapping}, $\mathfrak M\ni\gamma\mapsto\big(\gamma(0),\gamma(1)\big)\in\partial\Omega\times\partial\Omega\cong\mathfrak C$, one obtains a homotopy $h_*\colon[0,1]\times\mathcal A_h\to\mathfrak C$ that carries $\mathcal A_h$ to the two-point set $\{y_0,\mathcal Ry_0\}$, where $y_0=\big(\gamma_1(0),\gamma_1(1)\big)$. 

Let us show how to use $h_*$ to construct another homotopy carrying $\mathcal A_h$ to the two-point set $\{y_0,\mathcal Ry_0\}$ in $\mathfrak C\setminus\mathfrak C_0$. Note that $\mathcal A_h\cap\mathfrak C_0=\emptyset$; namely, $h(\tau,\cdot)$ fixes all points of $\mathfrak C_0$ for all $\tau\in[0,1]$ (by definition of admissible homotopies), and $\mathcal O_{r_*}\cap\mathfrak C_0=\emptyset$ for $r_*>0$ small enough. 

The second observation is that $\mathcal A_h$ can be written as the \emph{disjoint union} $F_1\bigcup F_2$ of two closed sets $F_1,F_2\subset\mathcal D$, with $\mathcal RF_1=F_2$; namely:
\[F_1=h_*(1,\cdot)^{-1}(y_0),\quad\text{and}\quad F_2=h_*(1,\cdot)^{-1}(\mathcal R y_0).\]
To conclude the proof, it suffices to show that $F_1$ is contractible in $\mathfrak C\setminus\mathfrak C_0$ to the singleton $\{y_0\}$ via some homotopy $\widetilde h_*\colon[0,1]\times F_1\to\mathfrak C\setminus\mathfrak C_0$;  the desired homotopy $k_*$ will then be obtained by extending $\widetilde h_*$ to $[0,1]\times F_2$ by $\mathcal R$-equivariance.\smallskip
The map $\widetilde h_*$ will be constructed in two stages. First, we will find a continuous map $\ell\colon[0,1]\times F_1\to\mathfrak C\setminus\mathfrak C_0$, with $\ell\big(\tau,(A,B)\big)=\big(\ell_\tau^1(A,B),\ell_\tau^2(A,B)\big)$ for all $(A,B)\in F_1$,  such that, denoting by $\delta_g>0$ the minimum of the injectivity radius function of the metric $g$ in $\partial\Omega\cong\mathbb S^N$:
\begin{itemize}
	\item[(i)] $\ell\big(0,(A,B)\big)=(A,B)$, for all $(A,B)\in F_1$;\smallskip
	
	\item[(ii)] $\min\limits_{\stackrel{\tau\in[0,1]}{(A,B)\in F_1}}\mathrm{dist}\big(\ell_\tau^1(A,B),\ell_\tau^2(A,B)\big)\ge\frac12\delta_g$;\smallskip
	
	\item[(iii)] $\ell_{1}^1(F_1)$ is the singleton $\{\gamma_1(0)\}$.
\end{itemize}
From (i) and (ii) it follows that the set $\ell^2_{1}(F_1)$ is contained in the complement $W$ of a disk of radius $\frac12\delta_g$ centered at $\gamma(0)$ in $\mathbb S^N$. This set $W$ is contractible to the singleton $\{\gamma(1)\}$ in $\mathbb S^n$ through a homotopy that fixes all points in a disk of radius $\frac14\delta_g$ centered at $\gamma(0)$. The map $\widetilde h_*$ will be obtained by applying $\ell$ first, and then concatenating such homotopy that contracts $W$ to $\{\gamma(1)\}$.
\smallskip

Thus, we are left with the construction of the map $\ell\colon[0,1]\times F_1\to\mathfrak C\setminus\mathfrak C_0$ as described above. To this aim, we consider the map $h_*\colon[0,1]\times F_1\to\mathfrak C$ that contracts $F_1$ to the point $y_0\in\mathfrak C$; let us write $h_*\big(\tau,(A,B)\big)=\big(h^1_\tau(A,B),h^2_\tau(A,B)\big)$ for all $(A,B)\in F_1$ and $\tau\in[0,1]$. Note that 
\begin{equation}\label{eq:h1=1}
h^1_1(F_1)=\{\gamma_1(0)\}.
\end{equation}
For our construction, we will only employ the first component $h^1$ of the homotopy $h_*$; we define $\overline h_1\colon[0,1]\times F_1\to\mathfrak C\cong\mathbb S^n\times\mathbb S^n$ by:
\[\overline h_1\big(\tau,(A,B)\big)=\big(h^1_\tau(A,B),B\big).\]

The desired homotopy $\ell$ will be obtained by concatenating successively stages of the homotopy $\overline h_1$ and the ``endpoint separating homotopy'' (Lemma~\ref{thm:elem1}), as we will describe below. 
It must be observed that $\overline h_1(\tau,\cdot)$ acts only on the first component of a point $(A,B)\in F_1$, while the endpoint separating homotopy acts only on the second component of $(A,B)$.

Up to a first application of the  endpoint-separating homotopy to the set $F_1$, we can assume:
\[\min_{(A,B)\in F_1}\mathrm{dist}(A,B)\ge\delta_g.\]
Let $t_1>0$ be the smallest number in $\left]0,1\right]$ (or $t_1=1$ if no such number exists) such that $\mathrm{dist}\big(h^1_{t_1}(A,B),B\big)\le\frac12\delta_g$. On the interval $[0,t_1]$, the homotopy $\ell$ is defined to be equal to $\overline h_1$. If $t_1=1$, then our proof is concluded, by \eqref{eq:h1=1}.

Assume $t_1<1$; after this instant, we apply again the endpoint-separating homotopy to the set:
\[\Big\{\big(h^1_{t_1}(A,B),B\big):(A,B)\in F_1\Big\};\]
which is then mapped to the set $F_1'$, that can be written as:
\[F_1'=\Big\{\big(h^1_{t_1}(A,B),B_1(A,B)\big):(A,B)\in F_1\Big\}\]
for a suitable continuous map $B_1\colon F_1\to\mathfrak C$.
This set now satisfies:
\[\min_{(A_1,B_1)\in F_1'}\mathrm{dist}(A_1,B_1)\ge\delta_g,\]
and we can apply again the homotopy $\overline h_1\colon[t_1,t_2]\times F_1'\to\mathfrak C$, where $t_2$ is the smallest number in $\left]t_1,1\right]$ (or $t_2=1$ if no such number exists) such that $\mathrm{dist}\big(h_{t_2}^1(A,B),B_1(A,B)\big)\le\frac12\delta_g$. If $t_2=1$, then our proof is concluded. Otherwise, we keep repeating successive applications of the endpoint separating homotopy followed by $\overline h_1$.

The conclusion is obtained by observing that our procedure terminates after a finite number of steps, that is, at some point we will be able to apply the homotopy $\overline h_1$ until the final instant $t=1$. This follows easily from the fact that, using the uniform continuity of $\overline h_1$, the difference $t_{k+1}-t_k$ of the instants defined in our construction must be bounded from below by some positive constant $\tau_0$, say $\tau_0\ge\frac1{n_0}$, for some $ n_0 \in \mathds{N}$, see Lemma~\ref{thm:elem2} for a precise statement.
Thus, our procedure stops after at most $n_0$ steps.
\end{proof}
\section{Proof of our main Theorem}
\label{sec:mainproof}
\subsection{Relative Lusternik--Schinerlmann category}
For the minimax theory  we will use a suitable version of Lusternik--Schinerlmann relative category, as defined for instance in \cite[Definition 3.1]{FW}.
Other definitions of the relative category
and relative cohomological indexes can be found e.g.\ in \cite{FH} and the references therein.

\begin{defin}\label{def:10.1}
Let $\mathcal X$ be a topological space and $\mathcal Y$ a closed subset of $\mathcal X$. A closed subset $F$ of $\mathcal X$ has \emph{relative category} equal to $k\in\mathds N$ ($\cat_{\mathcal X,\mathcal Y}(F)=k$) if $k$ is the minimal positive integer such that $F\subset\bigcup_{i=0}^k A_i$, where  $\{A_i\}_{i=1}^k$ is a family of open subsets of $\mathcal X$ satisfying:
\begin{itemize}
\item $F\cap Y\subset A_0$;
\item for all $i=0,\ldots,k$ there exists $h_i\in C^0\big([0,1]\times
A_i,\mathcal X\big)$ with the following properties:
\begin{enumerate}
\item\label{itm:def10.1-1} $h_i(0,x)=x,\,\forall x\in A_i,\,\forall i=0,\ldots,k$; \item\label{itm:def10.1-2}
for any $i=1,\ldots,k$:
\begin{enumerate}
\item\label{itm:def10.1-2a} there exists $x_i\in\mathcal X\setminus\mathcal Y$ such that $h_i(1,A_i)=\{x_i\}$;
\item\label{itm:def10.1-2b} $h_i\colon \big([0,1]\times A_i\big)\subset\mathcal X\setminus\mathcal Y$;
\end{enumerate}
\item\label{itm:def10.1-3} if $i=0$:
\begin{enumerate}
\item\label{itm:def10.1-3a} $h_0(1,A_0)\subset\mathcal Y$; 
\item\label{itm:def10.1-3b} $h_0(\tau,A_0\cap\mathcal Y)\subset
\mathcal Y,\,\forall\,\tau\in[0,1]$.
\end{enumerate}
\end{enumerate}
\end{itemize}
\end{defin}

For any $X\subset \mathfrak M$ which is $\mathcal R$-invariant, we denote by $\widetilde X$ the quotient space with respect to the
equivalence relation induced by $\mathcal R$.

The topological invariant employed in our theory is the relative category
$\cat_{\widetilde{\mathfrak C},\widetilde{\mathfrak C}_0}(\widetilde{\mathfrak C})$.
In \cite{esistenza} it has been shown  that
\begin{equation}\label{eq:10.1}
\cat_{\widetilde{\mathfrak C},\widetilde{\mathfrak C_0}}(\widetilde{\mathfrak C})\ge N,
\end{equation}
using the topological properties of the $(N-1)$-dimensional real projective space.
\subsection{The minimax argument}
Recall that $\Dgot$ denotes the class of all closed $\Rcal$-invariant subsets of $\mathfrak C$ and that, for $\mathcal D\in\Dgot$, the symbol $\mathcal H$ denotes the set of admissible homotopies, see Section~\ref{sub:admhom}. Define, for every $i=1,\ldots,N$,
\begin{equation}\label{eq:10.2}
\Gamma_i=\big\{\Dcal\in\Dgot\,:\,\cat_{\widetilde{\mathfrak C},\widetilde{\mathfrak C}_0}(\widetilde\Dcal)\ge i\big\},
\end{equation}
and
\begin{equation}\label{eq:10.3}
c_i=\inf_{\stackrel{\Dcal\in\Gamma_i,}{h\in\mathcal H}}\sup\big\{\mathcal F(h(1,x)): x \in \mathcal D\big\}.
\end{equation}

\begin{rem}\label{rem:10.2}
Every $c_i$ is a real number. Indeed $\mathcal F \geq 0$ so $c_i\geq 0$ for all $i$. Moreover, the identity homotopy is in $\mathcal H$ and $\widetilde{\mathfrak C} \in \Gamma_i$ for any $i$, therefore
by   \eqref{eq:defM0}, 
\begin{equation}\label{eq:maggiorazione}
c_i \leq M_0^2,\quad \text{ for every }i.
\end{equation}
\end{rem}

Given continuous maps $\eta_1\colon [0,1]\times F_1\to\mathfrak M$ and $\eta_2\colon [0,1]\times F_2\to\mathfrak M$ such that $\eta_1(1,F_1)\subset F_2$,
we define the \emph{concatenation} of $\eta_1$ and $\eta_2$ as the map $\eta_2\star \eta_1\colon [0,1]\times F_1\to\mathfrak M$ given by
\begin{equation}\label{eq:defconcatenationhomotop}
\eta_2\star \eta_1(t,x)=
\begin{cases}
\eta_1(2t,x),&\text{if\ } t\in[0,\tfrac12],\\
\eta_2(2t-1,\eta_1(1,x)),&\text{if\ } t\in[\tfrac12,1].
\end{cases}
\end{equation}

The following lemmas describe the properties of the $c_i$'s.

\begin{lem}\label{thm:lem10.3} The following statements hold:
\begin{enumerate}
\item\label{itm:lem10.3-1} $c_1 \geq \dfrac{\delta_0^2}{K_0^2}$;\smallskip

\item\label{itm:lem10.3-2} $c_1\le c_2\le\cdots\le c_N$.
\end{enumerate}
\end{lem}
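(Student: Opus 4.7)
Part (2) is immediate from the monotonicity of the relative category: $\cat_{\widetilde{\mathfrak{C}},\widetilde{\mathfrak{C}_0}}(\widetilde{\mathcal{D}}) \geq i+1$ trivially implies $\cat_{\widetilde{\mathfrak{C}},\widetilde{\mathfrak{C}_0}}(\widetilde{\mathcal{D}}) \geq i$, so $\Gamma_{i+1} \subseteq \Gamma_i$, and the infimum in \eqref{eq:10.3} is taken over a larger class, giving $c_i \leq c_{i+1}$.

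For part (1) I would argue by contradiction. Suppose $c_1 < \delta_0^2/K_0^2$; then there exist $\mathcal{D} \in \Gamma_1$ and $h \in \mathcal{H}$ with $\sup_{x \in \mathcal{D}}\mathcal{F}(h(1,x)) < \delta_0^2/K_0^2$. The plan is to exhibit an open set $A_0 \supset \mathcal{D}$ in $\mathfrak{C}$ together with a deformation $h_0 : [0,1] \times A_0 \to \mathfrak{C}$ as in Definition~\ref{def:10.1} with $k=0$, concluding that $\cat_{\widetilde{\mathfrak{C}},\widetilde{\mathfrak{C}_0}}(\widetilde{\mathcal{D}}) = 0$, which contradicts $\mathcal{D} \in \Gamma_1$.

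Concretely, take $A_0 = \{x \in \mathfrak{C} : \mathcal{F}(h(1,x)) < \delta_0^2/K_0^2\}$. This set is open by continuity of $\mathcal{F} \circ h(1,\cdot)$, contains $\mathcal{D}$ by assumption, and contains $\mathfrak{C}_0$, since $h$ fixes constant curves by \eqref{eq:numero2}. For every $x \in A_0$, Corollary~\ref{smallstrip} forces the image of $h(1,x)$ to lie in $\phi^{-1}([-\delta_0,0])$, hence inside the tubular neighborhood where the retraction $\mathbf{r}$ is defined. I would then set
\begin{equation*}
h_0(\tau, x) = \begin{cases}
\gamma\big(h(2\tau, x)(0),\, h(2\tau, x)(1)\big), & \tau \in [0, \tfrac12], \\[1mm]
\gamma\big(\mathbf{r}(h(1,x)(\tau-\tfrac12)),\, \mathbf{r}(h(1,x)(\tfrac32-\tau))\big), & \tau \in [\tfrac12, 1].
\end{cases}
\end{equation*}
Then $h_0(0,x) = \gamma(x(0),x(1)) = x$ since $x \in \mathfrak{C}$; the two branches agree at $\tau = \tfrac12$ because $\mathbf{r}$ is the identity on $\partial\Omega$; and $h_0(1, x) = \gamma(p,p) \in \mathfrak{C}_0$ with $p = \mathbf{r}(h(1,x)(\tfrac12))$. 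For $x \in A_0 \cap \mathfrak{C}_0$ (a constant curve at some $A \in \partial\Omega$), admissibility of $h$ forces $h(\tau,x) \equiv A$, so $h_0(\tau,x) = \gamma(A,A) = x$ throughout, giving the required invariance of $A_0 \cap \mathfrak{C}_0$. The $\mathcal{R}$-equivariance of $h$ propagates to $h_0$ via the identity $\mathcal{R}\gamma(A,B) = \gamma(B,A)$, so $h_0$ descends to $\widetilde{A_0}$ in $\widetilde{\mathfrak{C}}$, delivering the contradiction.

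The main obstacle is keeping the deformation inside $\mathfrak{C}$ (where the relative category is computed) rather than just inside $\mathfrak{M}$ (where $h$ naturally acts); this is handled by feeding endpoints through the chord map $\gamma$ of Lemma~\ref{thm:corde} and using the midpoint of the retracted curve $\mathbf{r}\circ h(1,x)$ as the target of the final contraction onto $\mathfrak{C}_0$.
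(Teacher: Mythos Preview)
Your argument is correct and follows the same route as the paper: both proofs obtain part~(2) from the inclusion $\Gamma_{i+1}\subset\Gamma_i$, and for part~(1) both argue by contradiction, invoking Corollary~\ref{smallstrip} to trap $h(1,\mathcal D)$ in the tubular neighborhood, then using the retraction $\mathbf r$ followed by a contraction to the midpoint to deform $\mathcal D$ into $\mathfrak C_0$. The only difference is cosmetic: the paper concatenates homotopies $H\star\Pi\star h_\varepsilon$ acting on curves in $\mathfrak M$ and leaves the passage back to $\mathfrak C$ implicit, whereas you make that passage explicit by feeding endpoints through the chord map $\gamma$ at every stage---a point you rightly flag as the main technical care needed.
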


\begin{lem}\label{thm:lem10.4} For all $i=1,\ldots,N$,  $c_i$ is a critical value of $\mathcal F$.
\end{lem}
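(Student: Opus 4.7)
The plan is the standard minimax contradiction argument, adapted to our admissible-homotopy class $\mathcal H$ and to the deformation lemma proved in Proposition~\ref{teo:deformation1}. I argue by contradiction: suppose that $c_i$ is not a critical value of $\mathcal F$. By Lemma~\ref{thm:lem10.3}\eqref{itm:lem10.3-1} and \eqref{eq:maggiorazione}, we have $c_i\in[\delta_0^2/K_0^2,M_0^2]$, so Proposition~\ref{teo:deformation1} applies and yields a number $\varepsilon=\varepsilon(c_i)>0$ and an admissible homotopy $\eta\in\mathcal H$ such that
\[
\eta\big(1,\mathcal F^{c_i+\varepsilon}\big)\subset\mathcal F^{c_i-\varepsilon}.
\]

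Next, by the very definition \eqref{eq:10.3} of $c_i$ as an infimum, I choose $\mathcal D\in\Gamma_i$ and $h\in\mathcal H$ satisfying
\[
\sup\big\{\mathcal F\big(h(1,x)\big)\,:\,x\in\mathcal D\big\}<c_i+\varepsilon,
\]
so that $h(1,\mathcal D)\subset\mathcal F^{c_i+\varepsilon}$.

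The key step is to concatenate these two homotopies using \eqref{eq:defconcatenationhomotop}, forming $\widetilde h=\eta\star h\colon[0,1]\times\mathfrak M\to\mathfrak M$, and to show that $\widetilde h$ still belongs to $\mathcal H$. Property \eqref{eq:numero1} is immediate since $\widetilde h(0,x)=h(0,x)=x$. Property \eqref{eq:numero2} follows because $h(\tau,\gamma)=\gamma$ for every $\gamma\in\mathfrak C_0$, in particular $h(1,\gamma)=\gamma$, and since $\eta$ also fixes $\mathfrak C_0$ we obtain $\widetilde h(\tau,\gamma)=\gamma$ for all $\tau$. Property \eqref{eq:numero3} is checked separately for $\tau\in[0,\tfrac12]$ and $\tau\in[\tfrac12,1]$, using the $\mathcal R$-equivariance of both $h$ and $\eta$:
\[
\widetilde h(\tau,\mathcal R\gamma)=\mathcal R\,\widetilde h(\tau,\gamma),\qquad\forall\,\tau\in[0,1],\;\gamma\in\mathfrak M.
\]
Thus $\widetilde h\in\mathcal H$.

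Finally, combining the two inclusions gives
\[
\widetilde h(1,\mathcal D)=\eta\big(1,h(1,\mathcal D)\big)\subset\eta\big(1,\mathcal F^{c_i+\varepsilon}\big)\subset\mathcal F^{c_i-\varepsilon},
\]
so that $\sup\{\mathcal F(\widetilde h(1,x)):x\in\mathcal D\}\le c_i-\varepsilon$. Since $\mathcal D\in\Gamma_i$ and $\widetilde h\in\mathcal H$, this contradicts the definition \eqref{eq:10.3} of $c_i$ as an infimum.

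The only non-routine point is the admissibility check for $\widetilde h=\eta\star h$; everything else is a verbatim transcription of the classical Lusternik--Schnirelman minimax scheme.
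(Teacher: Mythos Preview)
Your proof is correct and follows essentially the same approach as the paper's: both argue by contradiction, invoke Proposition~\ref{teo:deformation1} at the level $c_i$, pick $\mathcal D\in\Gamma_i$ and $h\in\mathcal H$ close to the infimum, and derive a contradiction from the concatenation $\eta\star h$. You simply supply more detail than the paper does, namely the verification that $c_i\in[\delta_0^2/K_0^2,M_0^2]$ (via Lemma~\ref{thm:lem10.3} and Remark~\ref{rem:10.2}) and the explicit check that $\eta\star h\in\mathcal H$, which the paper asserts without comment.
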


\begin{lem}\label{thm:lem10.5} Assume that the number of OGC's in $\overline\Omega$ is finite.
Then: 
\begin{equation}\label{eq:10.4}
c_i<c_{i+1}.
\end{equation}
for all $i=1,\ldots,N-1$.
\end{lem}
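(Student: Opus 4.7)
The plan is a contradiction argument of the standard Lusternik--Schnirelman type, exploiting the deformation lemmas of Section~\ref{sec:deflem} and the contraction property of $\mathcal A_h$ given by Proposition~\ref{teo:deformation6}. Suppose that $c_i=c_{i+1}=c$ for some $i\in\{1,\ldots,N-1\}$. Fix $r_*>0$ as in Section~\ref{sub:2nddeflem}, and let $\varepsilon=\varepsilon(r_*)>0$ and $\eta\in\mathcal H$ be given by Proposition~\ref{teo:deformation2}, so that $\eta(1,\mathcal F^{c+\varepsilon}\setminus\mathcal O_{r_*})\subset\mathcal F^{c-\varepsilon}$. By definition of $c_{i+1}$, I can choose $\mathcal D\in\Gamma_{i+1}$ and $h\in\mathcal H$ with
\[
\sup_{x\in\mathcal D}\mathcal F\big(h(1,x)\big)<c+\varepsilon.
\]

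The first key step is to split $\mathcal D$ into a ``bad'' part $\mathcal A_h$ (curves that $h$ pushes into a neighborhood of the known critical OGC's) and the complement $\mathcal B=\mathcal D\setminus\mathcal A_h$. Since $\mathcal O_{r_*}$ is open and $\mathcal R$-invariant and $h(1,\cdot)$ is continuous and $\mathcal R$-equivariant, $\mathcal A_h$ is open and $\mathcal R$-invariant in $\mathcal D$, so $\mathcal B$ is closed and $\mathcal R$-invariant, hence a legitimate element of $\Dgot$. Moreover $h(1,\mathcal B)\subset\mathcal F^{c+\varepsilon}\setminus\mathcal O_{r_*}$ by construction.

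The next step is to show that $\mathcal B\in\Gamma_i$. By Proposition~\ref{teo:deformation6}, the set $\mathcal A_h$ admits an $\mathcal R$-equivariant contraction in $\mathfrak C\setminus\mathfrak C_0$ onto the two-point orbit $\{y_0,\mathcal Ry_0\}$; passing to the quotient, this gives a contraction of $\widetilde{\mathcal A_h}$ to a single point in $\widetilde{\mathfrak C}\setminus\widetilde{\mathfrak C_0}$, from which one deduces $\cat_{\widetilde{\mathfrak C},\widetilde{\mathfrak C_0}}(\widetilde{\overline{\mathcal A_h}})\le 1$ (extending the contraction to a closed neighborhood, using that $\overline{\mathcal A_h}\cap\mathfrak C_0=\emptyset$ for $r_*$ small, by the properties listed at the start of Section~\ref{sub:2nddeflem}). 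Since $\mathcal D\subset\overline{\mathcal A_h}\cup\mathcal B$, the subadditivity of relative category yields
\[
i+1\le\cat_{\widetilde{\mathfrak C},\widetilde{\mathfrak C_0}}(\widetilde{\mathcal D})\le\cat_{\widetilde{\mathfrak C},\widetilde{\mathfrak C_0}}(\widetilde{\overline{\mathcal A_h}})+\cat_{\widetilde{\mathfrak C},\widetilde{\mathfrak C_0}}(\widetilde{\mathcal B})\le 1+\cat_{\widetilde{\mathfrak C},\widetilde{\mathfrak C_0}}(\widetilde{\mathcal B}),
\]
so $\mathcal B\in\Gamma_i$.

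Finally I concatenate $\eta$ after $h$ via \eqref{eq:defconcatenationhomotop}; both being admissible and $\mathcal R$-equivariant and both fixing $\mathfrak C_0$, the concatenation $\eta\star h$ lies in $\mathcal H$. On $\mathcal B$ it satisfies $(\eta\star h)(1,\mathcal B)=\eta(1,h(1,\mathcal B))\subset\mathcal F^{c-\varepsilon}$, so
\[
c_i\le\sup_{x\in\mathcal B}\mathcal F\big((\eta\star h)(1,x)\big)\le c-\varepsilon<c,
\]
contradicting $c_i=c$. The main delicate point I expect is the second step: verifying $\cat_{\widetilde{\mathfrak C},\widetilde{\mathfrak C_0}}(\widetilde{\overline{\mathcal A_h}})\le 1$ requires replacing the contraction of $\mathcal A_h$ supplied by Proposition~\ref{teo:deformation6} by one defined on a closed neighborhood compatible with Definition~\ref{def:10.1}, and then invoking the subadditivity property of the relative category in the form above. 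The rest is a straightforward bookkeeping of the admissibility of the concatenation and the $\mathcal R$-invariance at each stage.
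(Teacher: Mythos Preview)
Your argument is correct and follows essentially the same route as the paper's proof: assume $c_i=c_{i+1}=c$, pick $\mathcal D\in\Gamma_{i+1}$ and $h\in\mathcal H$ with $\sup_{\mathcal D}\mathcal F\big(h(1,\cdot)\big)\le c+\varepsilon(r_*)$, use Proposition~\ref{teo:deformation6} to see that $\widetilde{\mathcal A_h}$ is contractible in $\widetilde{\mathfrak C}\setminus\widetilde{\mathfrak C_0}$ and hence $\mathcal D\setminus\mathcal A_h\in\Gamma_i$, and then concatenate with the deformation $\eta$ of Proposition~\ref{teo:deformation2} to push the sup below $c$. The paper simply writes ``using simple properties of the relative category'' at the point where you spell out the subadditivity inequality and flag the passage from $\mathcal A_h$ to a suitable neighborhood compatible with Definition~\ref{def:10.1}; your more explicit treatment of that step is welcome and does not depart from the intended strategy.
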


\begin{proof}[Proof of Lemma \ref{thm:lem10.3}]
Let us prove \eqref{itm:lem10.3-1}. Assume by contradiction $c_1 < \frac{\delta_0^2}{K_0^2}$,
and fix $\varepsilon>0$ so that $c_1+\epsilon < \frac{\delta_0^2}{K_0^2}$. By
\eqref{eq:10.2} and \eqref{eq:10.3} there exists $\Dcal_\varepsilon\in\Gamma_1$, and $h_\varepsilon \in\Hcal$
such that
\[
\mathcal F\big(h_\varepsilon(1,x)\big) < \frac{\delta_0^2}{K_0^2} 
,\quad
\text{ for any }x \in \Dcal_\varepsilon.
\]
Then, by Corollary~\ref{smallstrip},
\[
\inf\Big\{\phi\big(h_\varepsilon(1,x)(s)\big): s \in [0,1],\  x \in 
\Dcal_\varepsilon\Big\} \geq -\delta_0 .
\]
Using the retraction $\mathbf r$ in \eqref{eq:retraction}, we can then construct a homotopy $\Pi$ that carries $h_\varepsilon(1,\Dcal_\varepsilon)$ onto $\partial \Omega$. Finally, we define the homotopy
\[
H(\tau,x)(s)=x\big((1-\tau)s+\tfrac\tau2\big);
\]
such homotopy carries each curve $x$ to the constant curve $x(1/2)$.
We apply such $H$ to the curves of  $(\Pi \star h_\varepsilon)(1,\Dcal_\varepsilon)$, obtaining that 
$\cat_{\widetilde{\mathfrak C},\widetilde{\mathfrak C}_0}(\widetilde\Dcal_\varepsilon)=0$, in contradiction with the
definition of $\Gamma_1$.
\smallskip

To prove \eqref{itm:lem10.3-2}, fix $i\in\{1,\ldots,N-1\}$; by \eqref{eq:10.3},
for every $\varepsilon>0$ there exists $\Dcal\in\Gamma_{i+1}$ and $h\in\Hcal$ such that
\[
\mathcal F\big(h(1,x)\big)\le c_{i+1}+\varepsilon,\quad \forall\,x \in \Dcal.
\]
Since $\Gamma_{i+1}\subset\Gamma_i$. by definition of $c_i$ we deduce $c_i\le c_{i+1}+\varepsilon$, and
\eqref{itm:lem10.3-2} is proved, since $\varepsilon$ is arbitrary.
\end{proof}

\begin{proof}[Proof of Lemma \ref{thm:lem10.4}]
Assume by contradiction that $c_i$ is not a critical value for some $i$.
Take $\varepsilon=\varepsilon(c_i)$ as in Proposition~
\ref{teo:deformation1}, and choose $\Dcal_\varepsilon \in \mathfrak D$, $h\in\Hcal$ such that
\[
\mathcal F\big(h_\epsilon(1,x)\big)\le c_i+\varepsilon,\quad \text{ for all }x \in \Dcal_\varepsilon.
\]
Now let $\eta$ be as in Proposition~\ref{teo:deformation1}. Since $\eta\star h_\varepsilon \in \mathcal H$ and
\[
\mathcal F\big((\eta\star h_\varepsilon)(1,x)\big)\le c_i-\varepsilon,\quad \text{ for all }x \in \Dcal_\varepsilon,
\]
we obtain a contradiction with \eqref{eq:10.3}.  
\end{proof}

\begin{proof}[Proof of Lemma \ref{thm:lem10.5}]
Assume by contradiction that \eqref{eq:10.4} does not hold. Then there exists $i\in\{1,\ldots,N-1\}$ such that
\[
c:=c_i=c_{i+1}.
\]
Take $\varepsilon_*=\varepsilon(r_*)$ as in Proposition~\ref{teo:deformation2},
$\Dcal_{i+1}\in\Gamma_{i+1}$ and $h\in\Hcal$, such that
\[
\mathcal F\big(h(1,x)\big)\le c+\varepsilon_*,\quad \text{ for all }x \in \Dcal_{i+1}.
\]
Let
$\mathcal A_h$ be the open set defined in \eqref{eq:Ah};
 by Proposition~\ref{teo:deformation6}, its projection $\widetilde{\mathcal A_h}$ in the quotient space $\widetilde{\mathfrak C}$ is contractible in $\widetilde{\mathfrak C}\setminus\widetilde{\mathfrak C}_0$.
Then, by definition of $\Gamma_i$,
and using simple properties of the relative category, the closed set $\Dcal_i:=\Dcal_{i+1}\setminus {\mathcal A_h}$ belongs to $\Gamma_i$.
Now, let $\eta$ be as in Proposition~\ref{teo:deformation2}. We have
\[
\mathcal F\big((\eta \star h)(1,x)\big)\le c-\varepsilon_*,\quad \text{ for all }x \in \Dcal_{i},
\]
in contradiction with the definition of $\Gamma_i$. This concludes the proof.
\end{proof}

\subsection{Proof of the main theorem}
Using Lemmas \ref{thm:lem10.3}--\ref{thm:lem10.5} 
it will be sufficient to prove that if $x_1$ and $x_2$ are OGC's:
\begin{equation}\label{eq:distinzione}
x_1\big([0,1]\big)=x_2\big([0,1]\big)\quad \Longrightarrow\quad  \mathcal F(x_1)=\mathcal F(x_2).
\end{equation}
Since $\dot x_1$ and $\dot x_2$ are never vanishing, if $x_1\big([0,1]\big)=x_2\big([0,1]\big)$ we have
\begin{equation}\label{eq:ripx1x2}
x_2(s)=x_1\big(\theta(s)\big)
\end{equation}
for some $C^2$-diffeomorphism $\theta\colon[0,1]\to[0,1]$with
\begin{equation}\label{eq:theta}
\text{either}\quad\theta(0)=0,\  \theta(1)=1,\quad \text{ or }\quad \theta(0)=1,\  \theta(1)=0.
\end{equation}
Now, $x_1$ and $x_2$ satisfy the differential equation
\[
\Dds\dot x=0.
\]
Moreover, $\dot x_2(s)=\dot\theta(s)\dot x_1\big(\theta(s)\big)$ and 
\begin{equation*}
\Dds\dot x_2=\ddot\theta(s)\,\dot x_1\big(\theta(s)\big)+\big(\dot \theta(s)\big)^2\Dds\dot x_1\big(\theta(s)\big)=\\
\ddot\theta(s)\,\dot x_1\big(\theta(s)\big).
\end{equation*}
Therefore, it must be $\ddot\theta(s)\dot x_1\big(\theta(s)\big)=0$ and since 
$\dot x_1\big(\theta(s)\big)\ne0$ for all $s$, we have $\ddot\theta(s)=0$ for all $s$. Then by \eqref{eq:theta} we deduce that either $\theta(s)=s$ or $\theta(s)=1-s$: in both cases $\mathcal F(x_2)=\mathcal F(x_1)$, proving \eqref{eq:distinzione}.\qed

\appendix

\section{On the transversality of the normal bundle and the tangent bundle of submanifolds}\label{app:A}
Let $(M,g)$ be a Riemannian manifold, and let $S\subset M$ be an embedded submanifold. For the application we aim at, $S$ will be the boundary of $M$.

The normal bundle of $S$ will be denoted by $\mathcal N(S)$, and for $p\in S$, we set $\mathcal N_p(S)=\mathcal N(S)\cap T_pM$. The normal bundle $\mathcal N(S)$ and the tangent bundle $TS$ will be considered submanifolds of the tangent bundle $TM$. Given a point $p\in S$ and a normal vector $v\in\mathcal N_p(S)$, we will denote by $\alpha^S_p:T_pS\times T_pS\to\mathcal N_p(S)$ the second fundamental form of $S$ at $p$, and by $A_v^S:T_pS\to T_pS$ the shape operator of $S$ at $p$ in the direction $v$, defined by:
\[\phantom{,\quad\forall\,u,u'\in T_pS.}g\big(A_v^S(u),u'\big)=-g\big(\alpha^S(u,u'),v\big),\quad\forall\,u,u'\in T_pS.\]
For $v\in T_pM$, let us identify $T_v(TM)$ with $T_pM\oplus T_pM$, with the first summand being the horizontal space of the Levi--Civita connection of $g$ and the second summand being the vertical subspace.
\subsection{A transversality condition}
\begin{prop}\label{thm:firstcriterion}
Let $S_1,S_2\subset M$ hypersurfaces that meet transversally at  $p\in S_1\cap S_2$. Assume that $v\in T_pM\setminus\{0\}$ belongs to the intersection $\mathcal N(S_1)\cap TS_2$ and denote by $\mathcal A_v\subset T_pM$ the subspace:
\begin{equation}\label{eq:defAv}
\mathcal A_v=\big\{A_v^{S_1}(w)-\alpha^{S_2}(w,v):w\in T_pS_1\cap T_pS_2\big\}.
\end{equation}
An element $(z,z')\in T_pM\oplus T_pM\cong T_v(TM)$ belongs to the sum $T_v(\mathcal N_p(S_1)\big)+T_v(TS_2)$ iff $z'$ belongs to the affine subspace of $T_pM$:
\[\big[A^1_v(u_1)+\alpha^2_p(u_2,v)\big]+\mathcal A_v+T_pS_2,\]
where $u_1\in T_pS_1$, $u_2\in T_pS_2$ are chosen in such a way that $z=u_1+u_2$. In particular, $\mathcal N(S_1)$ and $TS_2$ meet transversally at $v$ if and only if:
\begin{equation}\label{eq:transvcondition}
\mathcal A_v\not\subset T_pS_2.
\end{equation}
\end{prop}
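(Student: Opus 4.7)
The plan is to describe both $T_v(\mathcal N(S_1))$ and $T_v(TS_2)$ explicitly as subspaces of $T_pM\oplus T_pM$ via the horizontal/vertical splitting from the Levi--Civita connection, take their sum, and then invoke the hypothesis $v\in\mathcal N(S_1)\cap TS_2$ to collapse the ``obvious'' pieces of that sum down to $T_pS_2+\mathcal A_v$.

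First I would compute $T_v(\mathcal N(S_1))$ by differentiating a curve $t\mapsto(\gamma(t),V(t))$ with $\gamma(t)\in S_1$, $V(t)\in\mathcal N_{\gamma(t)}(S_1)$, $\gamma(0)=p$, $V(0)=v$. The Weingarten formula (read with the paper's sign convention $g(A_v^S u,u')=-g(\alpha^S(u,u'),v)$) gives $\frac{DV}{dt}\big|_0=A_v^{S_1}(u_1)+\xi$, with $u_1=\dot\gamma(0)\in T_pS_1$ and $\xi\in\mathcal N_p(S_1)$. Thus
\[
T_v(\mathcal N(S_1))=\bigl\{(u_1,\,A_v^{S_1}(u_1)+\xi)\,:\,u_1\in T_pS_1,\ \xi\in\mathcal N_p(S_1)\bigr\}.
\]
An analogous argument for $TS_2$, using the Gauss formula $\frac{DV}{dt}=\frac{D^{S_2}V}{dt}+\alpha^{S_2}(\dot\gamma,V)$, yields
\[
T_v(TS_2)=\bigl\{(u_2,\,w+\alpha^{S_2}(u_2,v))\,:\,u_2\in T_pS_2,\ w\in T_pS_2\bigr\}.
\]

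Next I would combine these. A pair $(z,z')$ lies in the sum iff there exist $u_1\in T_pS_1$, $u_2\in T_pS_2$, $\xi\in\mathcal N_p(S_1)$, $w\in T_pS_2$ with $z=u_1+u_2$ and $z'=A_v^{S_1}(u_1)+\alpha^{S_2}(u_2,v)+\xi+w$. Transversality of $S_1$ and $S_2$ gives $T_pS_1+T_pS_2=T_pM$, so any $z$ admits such a decomposition; the ambiguity is precisely the replacement $u_1\mapsto u_1+\ell$, $u_2\mapsto u_2-\ell$ for $\ell\in T_pS_1\cap T_pS_2$, which modifies $A_v^{S_1}(u_1)+\alpha^{S_2}(u_2,v)$ by the element $A_v^{S_1}(\ell)-\alpha^{S_2}(\ell,v)$ of $\mathcal A_v$. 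Therefore $z'$ ranges over
\[
\bigl[A_v^{S_1}(u_1)+\alpha^{S_2}(u_2,v)\bigr]+\mathcal N_p(S_1)+T_pS_2+\mathcal A_v.
\]

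Finally, the hypothesis $0\ne v\in\mathcal N(S_1)\cap TS_2$, together with the fact that $S_1$ is a hypersurface (so $\mathcal N_p(S_1)=\mathbb Rv$), forces $\mathcal N_p(S_1)\subset T_pS_2$, and hence $\mathcal N_p(S_1)+T_pS_2=T_pS_2$. This collapses the affine set to the one in the statement. For transversality, the sum must exhaust $T_pM\oplus T_pM$; since the horizontal component is automatic, the condition reduces to $T_pS_2+\mathcal A_v=T_pM$, and because $T_pS_2$ has codimension one this is equivalent to $\mathcal A_v\not\subset T_pS_2$.

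The computation is essentially a bookkeeping exercise once the Gauss/Weingarten decompositions are in place; the only mildly subtle point is keeping the paper's non-standard sign for $A_v^{S_1}$ consistent and recognising that the coset ambiguity from the non-uniqueness of $z=u_1+u_2$ is precisely what produces the subspace $\mathcal A_v$.
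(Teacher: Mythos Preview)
Your proof is correct and follows essentially the same approach as the paper: you compute $T_v(\mathcal N(S_1))$ and $T_v(TS_2)$ via the connection splitting (the paper simply states these formulas, writing $\lambda\cdot v$ in place of your $\xi\in\mathcal N_p(S_1)$, which is the same thing since $S_1$ is a hypersurface), and then read off the affine description of the sum from the coset ambiguity in $z=u_1+u_2$. The only difference is organizational: for the transversality criterion the paper packages the final linear-algebra step (``$T_pS_2+\mathcal A_v=T_pM$ iff $\mathcal A_v\not\subset T_pS_2$'') into a separate abstract lemma, whereas you argue it directly; your inline argument is perfectly adequate here.
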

\begin{proof}
Using the identification $T_v(TM)\cong T_pM\oplus T_pM$, the tangent spaces $T_v\big(\mathcal N(S_1)\big)$ and $T_v(TS_2)$ are given by:
\[\begin{aligned}
&T_v\big(\mathcal N(S_1)\big)=\big\{\big(u_1,A_v^{S_1}(u_1)+\lambda\cdot v\big):u_1\in T_pS_1,\  \lambda\in\mathds R\big\},\\
&T_v(TS_2)=\big\{\big(u_2,u_2'+\alpha^{S_2}(u_2,v)\big):u_2,u_2'\in T_pS_2\big\}.
\end{aligned}\]
From these equalities, one obtains readily a proof of the first statement.

As to the transversality of  $\mathcal N(S_1)$ and $TS_2$ at $v$, i.e., $T_v\big(\mathcal N(S_1)\big)+T_v(TS_2)=T_v(TM)$, the proof follows from a linear algebra argument (apply next Lemma to $V=T_pM$, $V_1=T_pS_1$, $V_2=T_pS_2$, $\widetilde V_2=\mathds R\cdot v$, $A=A^{S_1}_v$ and $\alpha=\alpha^{S_2}(\cdot,v)$).
\end{proof}
\begin{lem}\label{thm:lemlinalg}
Let $V$ be a finite dimensional vector space, and let $V_1,V_2\subset V$ be subspaces with $V_1+V_2=V$, and $V_2$ of codimension $1$. Let $\widetilde V_2\subset V_2$ be an arbitrary subspace, let $A:V_1\to V_1$ and $\alpha:V_2\to V$ be linear maps, and define subspaces $W_1,W_2\subset V\oplus V$ and $\mathcal A\subset V$ by:
\[\begin{aligned}
&W_1=\big\{\big(u_1,A(u_1)+\widetilde v_2\big):u_1\in V_1,\ \widetilde v_2\in\widetilde V_2\big\},\\
&W_2=\big\{\big(u_2,v_2+\alpha(u_2)\big):u_2,v_2\in V_2\big\},\\
&\mathcal A=\big\{Aw-\alpha w:w\in V_1\cap V_2\big\}.\end{aligned}\]
Then, $W_1+W_2=V\oplus V$ if and only if $\mathcal A\not\subset V_2$.
\end{lem}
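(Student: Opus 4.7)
My plan is to reduce the condition $W_1+W_2=V\oplus V$ to a statement about the kernel of the first projection and then to an inclusion of $\mathcal A$ into $V_2$.

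First, I would consider the projection $\pi_1\colon V\oplus V\to V$ onto the first factor and restrict it to $W_1+W_2$. By the definition of $W_1$ and $W_2$,
\[
\pi_1(W_1)=V_1,\qquad \pi_1(W_2)=V_2,
\]
so the hypothesis $V_1+V_2=V$ gives $\pi_1(W_1+W_2)=V$. Hence $W_1+W_2=V\oplus V$ if and only if the kernel
\[
K=\bigl\{(0,v)\in V\oplus V:(0,v)\in W_1+W_2\bigr\}
\]
of $\pi_1|_{W_1+W_2}$ projects onto the whole of $V$ under the second projection.

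Next, I would compute $\pi_2(K)$ directly. A pair $(0,v)\in W_1+W_2$ is, by definition, of the form
\[
(0,v)=\bigl(u_1,A(u_1)+\widetilde v_2\bigr)+\bigl(u_2,v_2'+\alpha(u_2)\bigr),
\]
with $u_1\in V_1$, $u_2,v_2'\in V_2$ and $\widetilde v_2\in\widetilde V_2$. The first-coordinate equation forces $u_2=-u_1$, which, combined with $u_2\in V_2$, gives $u_1\in V_1\cap V_2$. Substituting in the second coordinate yields
\[
v=A(u_1)-\alpha(u_1)+v_2'+\widetilde v_2.
\]
Since $\widetilde V_2\subset V_2$, the term $v_2'+\widetilde v_2$ ranges over all of $V_2$ as $v_2'$ ranges over $V_2$. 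Therefore
\[
\pi_2(K)=\mathcal A+V_2.
\]

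Finally, I would invoke the codimension hypothesis. Because $V_2$ has codimension one in $V$, the equality $\mathcal A+V_2=V$ is equivalent to $\mathcal A\not\subset V_2$. Combining this with the reduction above, $W_1+W_2=V\oplus V$ if and only if $\mathcal A\not\subset V_2$, which is the desired conclusion.

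There is no real obstacle here: the whole statement is a bookkeeping exercise on the image of $\pi_2$ restricted to the kernel of $\pi_1|_{W_1+W_2}$. The only mildly subtle point is noticing that the subspace $\widetilde V_2$ disappears from the final criterion because of the inclusion $\widetilde V_2\subset V_2$, so it plays no role in whether $W_1+W_2$ fills up $V\oplus V$.
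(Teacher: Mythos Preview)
Your proof is correct and follows essentially the same route as the paper's: both use $V_1+V_2=V$ to handle the first coordinate, then reduce the second-coordinate condition to the surjectivity of $(h,v_2)\mapsto (A-\alpha)(h)+v_2$ on $(V_1\cap V_2)\times V_2$, which by the codimension-one hypothesis is equivalent to $\mathcal A\not\subset V_2$. The only difference is packaging: you phrase the first step as ``$\pi_1$ is surjective on $W_1+W_2$, so it remains to compute $\pi_2$ of the kernel,'' whereas the paper solves the two equations directly; the underlying computation is identical.
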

\begin{proof}
The equality $W_1+W_2=V\oplus V$ holds iff for all $s,t\in V$, there exist $u_1\in V_1$, $\widetilde v_2\in\widetilde V_2$ and $u_2,v_2\in V_2$ such that:
\[u_1+u_2=s,\quad\text{and}\quad A(u_1)+\widetilde v_2+v_2+\alpha(u_2)=t.\]
Since $V_1+V_2=V$, there exist $\overline u_1\in V_1$ and $\overline u_2\in V_2$ such that $\overline u_1+\overline u_2=s$; every other pair $(u_1,u_2)\in V_1\times V_2$ with $u_1+u_2=s$ is of the form $u_1=\overline u_1+h$ and $u_2=\overline u_2-h$ for some $h\in V_1\cap V_2$. Thus, the desired transversality holds iff there exists $h\in V_1\cap V_2$, $\widetilde v_2\in\widetilde V_2$ and $v_2\in V_2$ such that:
\[A(\overline u_1)+A(h)+\widetilde v_2+v_2+\alpha(\overline u_2)-\alpha(h)=t,\]
i.e.,
\[A(h)-\alpha(h)+v_2=t-\widetilde v_2-A(\overline u_1)-\alpha(\overline u_2).\]
Clearly, when $t$ is arbitrary in $V$, the right hand side of the above equality is an arbitrary vector in $V$. Thus, $W_1+W_2=V\oplus V$ holds iff
the  linear map \[(V_1\cap V_2)\times V_2\ni(h,v_2)\longmapsto A(h)-\alpha(h)+v_2\in V\] is surjective. Since $V_2$ has codimension $1$ in $V$, this holds iff the image of the linear map
$V_1\cap V_2\ni h\mapsto A(h)-\alpha(h)\in V$ is not contained in $V_2$, i.e., iff $\mathcal A\not\subset V_2$.
\end{proof}
\subsection{The case of a $1$-parameter family of submanifolds}
We will now study a transversality problem between the tangent bundle of a hypersurface $S_2$, and the union of the normal bundles of a smooth $1$-parameter family of hypersurfaces $S_1(t)$, $t\in\left]-\varepsilon,\varepsilon\right[$. \smallskip

More precisely, let us consider the following setup.
\begin{itemize}
\item $S_1,S_2\subset M$ are hypersurfaces, and $p\in S_1\cap S_2$;
\item there exists a nonzero $v\in\mathcal N_p(S_1)\cap T_pS_2$. In particular, $S_1$ and $S_2$ meet transversally at $p$.
\item $\mathbf n$ is a smooth unit vector field along $S_1$;
\item $F\colon\left]-\varepsilon,\varepsilon\right[\times S_1\to M$ is defined by:
\[F(t,x)=\exp_x\big(t\cdot\mathbf n(x)\big);\]
\item $S_1(t)$ is defined as the image of the map $F(t,\cdot)\colon S_1\to M$. 
\end{itemize}
Under suitable non-focality assumption for $p$, we can assume that $F$ is an embedding of $\left]-\varepsilon,\varepsilon\right[\times S_1$ onto an open subset of $M$. Its image is foliated by the smooth hypersurfaces $S_1(t)$.
\smallskip

We consider the union:
\[\bigcup_{t\in\left]-\varepsilon,\varepsilon\right[}\mathcal N\big(S_1(t)\big)\subset TM;\]
this is a smooth submanifold of $TM$.
Under the above assumptions, $\frac{\partial F}{\partial t}(t,x)$ is a unit vector normal to $S_1(t)$ at $F(t,x)$, and we have a parameterization of the union $\bigcup\limits_{t\in\left]-\varepsilon,\varepsilon\right[}\mathcal N\big(S_1(t)\big)$ given by the smooth map:
\[G\colon\left]-\varepsilon,\varepsilon\right[\times S_1\times\mathds R\longrightarrow TM\]
defined by:\
\[\qquad G(t,y,\lambda)=\lambda\cdot\frac{\partial F}{\partial t}(t,y),\quad (t,y,\lambda)\in\left]-\varepsilon,\varepsilon\right[\times S_1\times\mathds R.\]
Our aim is to determine when the map $G$ is transversal in $TM$ to $TS_2$ at the point $(0,p,\lambda)$, where $G(0,p,\lambda)=\lambda\cdot\mathbf n(p)=v$.

Clearly, the image of $\mathrm dG(0,p,\lambda)$ is given by:
\[\mathrm{Im}\big(\mathrm dG(0,p,\lambda)\big)=\mathds R\cdot\frac{\partial G}{\partial t}(0,p,\lambda)+T_v\big(\mathcal N(S_1)\big).\]
As we have observed above, if $T_v\big(\mathcal N(S_1)\big)+T_v(TS_2)=T_v(TM)$, i.e., if $\mathcal A_v\not\subset T_pS_2$, then $G$ is transversal to $TS_2$ at $(0,p,\lambda)$.

On the other hand, if $T_v\big(\mathcal N(S_1)\big)+T_v(TS_2)\ne T_v(TM)$, then it is easy to check that $T_v\big(\mathcal N(S_1)\big)+T_v(TS_2)$ has codimension $1$ in $T_v(TM)$. This follows immediately from the observation the the projection onto the first factor $T_v(TM)\cong T_pM\times T_pM\to T_pM$ induces an isomorphism between the quotient:
\[\left(T_v\big(\mathcal N(S_1)\big)+T_v(TS_1)\right)\big/\left(\{0\}\times T_pS_2\right)\]
and $T_pM$. 

Thus, $G$ is transversal to $TS_2$ at $(0,p,\lambda)$ iff \[\frac{\partial G}{\partial t}(0,p,\lambda)\not\in T_v\big(\mathcal N(S_2)\big)+T_v(TS_1).\]
Now, $\frac{\partial G}{\partial t}(0,p,\lambda)=\big(\mathbf n(p),0\big)$, because the $t\mapsto\frac{\partial F}{\partial t}(t,p)$ is parallel along the geodesic $t\mapsto F(t,p)$. Since $v=\lambda\cdot\mathbf n(p)$, we conclude that $G$ is transversal to $TS_2$ at $(0,p,\lambda)$ iff $(v,0)\not\in T_v\big(\mathcal N(S_1)\big)+T_v(TS_1)$. This happens iff $\alpha_p^2(v,v)\not\in T_pS_2$, i.e., iff $\alpha_p^2(v,v)\ne0$.
\medskip

In conclusion, we have proved the following:
\begin{prop}\label{thm:fintransv}
In the above notations, the submanifold \[\bigcup\limits_{t\in\left]-\varepsilon,\varepsilon\right[}\mathcal N\big(S_1(t)\big)\] is transversal to $TS_2$ at $v$ iff either one of the two conditions holds:
\begin{itemize}
\item[(a)] $\mathcal A_v\not\subset T_pS_2$;
\item[(b)] $\alpha_p^2(v,v)\ne0$.\qed
\end{itemize}
\end{prop}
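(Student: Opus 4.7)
My plan is to reduce the transversality question for the submanifold $\bigcup_t \mathcal N(S_1(t))$ to the transversality question for a single fiber $\mathcal N(S_1)$, which is already understood by Proposition~\ref{thm:firstcriterion}, and then see what extra contribution is gained by varying the parameter $t$. Concretely, I would parameterize the union of normal bundles by the smooth map $G(t,y,\lambda)=\lambda\,\tfrac{\partial F}{\partial t}(t,y)$ from $\left]-\varepsilon,\varepsilon\right[\times S_1\times\mathds R$ into $TM$, and study whether $\mathrm{Im}\,\mathrm dG(0,p,\lambda)+T_v(TS_2)=T_v(TM)$. Since the $(y,\lambda)$-partial derivatives of $G$ at $t=0$ sweep out exactly $T_v(\mathcal N(S_1))$, the image of $\mathrm dG(0,p,\lambda)$ is $T_v(\mathcal N(S_1))+\mathds R\cdot\partial_t G(0,p,\lambda)$.

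The first case to dispatch is when condition (a) of Proposition~\ref{thm:firstcriterion} already holds for the fiber, i.e.\ $\mathcal A_v\not\subset T_pS_2$: then $T_v(\mathcal N(S_1))+T_v(TS_2)=T_v(TM)$, and the $t$-direction is not even needed. Otherwise I would check that the failure is exactly of codimension one: projecting $T_v(TM)\cong T_pM\oplus T_pM$ onto the first factor carries $\{0\}\oplus T_pS_2\subset T_v(\mathcal N(S_1))+T_v(TS_2)$ to $0$ and induces a surjection onto $T_pM$, so the quotient $(T_v(\mathcal N(S_1))+T_v(TS_2))/\bigl(\{0\}\oplus T_pS_2\bigr)$ is $T_pM$, i.e.\ the codimension in $T_v(TM)$ is one. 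Hence in this case the transversality of $G$ to $TS_2$ at $v$ reduces to checking that the single vector $\partial_t G(0,p,\lambda)$ does not lie in the subspace $T_v(\mathcal N(S_1))+T_v(TS_2)$.

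The decisive computation, which I expect to be the only step requiring care, is the evaluation of $\partial_t G(0,p,\lambda)$ in the horizontal/vertical splitting of $T_v(TM)$. Since $t\mapsto \tfrac{\partial F}{\partial t}(t,p)$ is the velocity of the geodesic $t\mapsto F(t,p)=\exp_p(t\mathbf n(p))$, it is parallel along that geodesic, so its covariant derivative at $t=0$ vanishes; in the horizontal/vertical identification this means $\partial_t G(0,p,\lambda)=(\lambda\mathbf n(p),0)=(v,0)$. Thus $G$ is transversal to $TS_2$ at $v$ iff $(v,0)\not\in T_v(\mathcal N(S_1))+T_v(TS_2)$.

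Finally, to turn this last non-containment into condition (b), I would unpack the descriptions of $T_v(\mathcal N(S_1))$ and $T_v(TS_2)$ used in the proof of Proposition~\ref{thm:firstcriterion}. Writing $(v,0)$ as a sum of elements of those two spaces would force $v=u_1+u_2$ with $u_1\in T_pS_1$, $u_2\in T_pS_2$ and the vertical components $A_v^{S_1}(u_1)+\mu v+u_2'+\alpha^{S_2}(u_2,v)=0$ for some $\mu\in\mathds R$, $u_2'\in T_pS_2$; modding out by $T_pS_2$ and using $v\in T_pS_2$ one sees the obstruction is precisely whether $\alpha_p^{S_2}(v,v)\in T_pS_2$, which since $\alpha^{S_2}$ takes values in $\mathcal N(S_2)$ is equivalent to $\alpha_p^{S_2}(v,v)=0$. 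Therefore $(v,0)\notin T_v(\mathcal N(S_1))+T_v(TS_2)$ exactly when $\alpha_p^2(v,v)\ne 0$, completing the proof. The main obstacle is bookkeeping the horizontal/vertical decomposition carefully enough to turn the abstract transversality condition into the clean geometric conditions (a) and (b).
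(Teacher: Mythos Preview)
Your proof is correct and follows essentially the same route as the paper's: parameterize the union of normal bundles by $G$, split off the $t=0$ fiber to invoke Proposition~\ref{thm:firstcriterion}, observe that in the non-transversal case the sum $T_v(\mathcal N(S_1))+T_v(TS_2)$ has codimension one, and then compute $\partial_tG(0,p,\lambda)$ using the parallelism of the geodesic velocity field to reduce to condition~(b). One small slip: the horizontal component of $\partial_tG(0,p,\lambda)$ is the velocity of the footpoint curve $t\mapsto F(t,p)$, namely $\mathbf n(p)$ rather than $\lambda\,\mathbf n(p)$; but since $v=\lambda\,\mathbf n(p)$ with $\lambda\ne0$, the non-containment condition for $(\mathbf n(p),0)$ and for $(v,0)$ are equivalent, so your conclusion stands.
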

\noindent Note that conditions (a) and (b) above are stable by $C^2$-small perturbations of the metric~$g$.
\begin{cor}\label{thn:nonemptyinterior}
Let $N\ge2$ and let $\mathds D^N$ be the $N$-dimensional unit disk in $\mathds R^n$. The set of Riemannian metrics on $\mathds D^N$ that admit O--T chords has nonempty interior relatively to the $C^2$-topology.
\end{cor}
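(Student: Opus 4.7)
The plan is to exhibit a single Riemannian metric $g_0$ on $\mathds D^N$ that admits an O--T chord at which the transversality criterion of Proposition~\ref{thm:fintransv} is satisfied, and then to promote this transversality into an open set in the $C^2$--topology of metrics. For $N=2$ I would build a concrete planar example: take a bounded smooth $\overline\Omega_0 \subset \mathds R^2$ homeomorphic to $\mathds D^2$ shaped like a ``bean'' whose boundary is mostly strictly convex but contains a small concave ``dent''. By adjusting the position and the curvature of the dent, one arranges that a Euclidean segment leaves a point $y \in \partial\Omega_0$ on the convex side orthogonally to $\partial\Omega_0$ and meets the concave arc tangentially at a point $p$ where the curvature of $\partial\Omega_0$ is nonzero. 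This segment is then an O--T chord of $(\overline\Omega_0, g_{\mathrm{eucl}})$, and the second fundamental form $\alpha^{\partial\Omega_0}_p(v,v)$, with $v$ the velocity of the chord at $p$, is nonzero. For $N \ge 3$, I would smoothly thicken $\overline\Omega_0$ into $\mathds R^N$ (for instance by smoothing $\overline\Omega_0 \times \overline{\mathds D}{}^{N-2}_\rho$ for a small $\rho > 0$, with the smoothing performed away from $y$ and $p$); the chord, now lying in $\mathds R^2 \times \{0\}$, remains an O--T chord, and the product structure guarantees that the relevant second fundamental form at $p$ still coincides with the planar one.

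Next I would translate the existence of the O--T chord into the transversality setup of Section~\ref{app:A}. With $\mathbf n$ the inward unit $g_0$--normal to $\partial\Omega$, the map $F(t,x) = \exp_x(t\mathbf n(x))$ and the map $G(t,y,\lambda) = \lambda\,\partial_t F(t,y)$ parametrize (near the chord) the union of normal bundles of the parallel hypersurfaces to $\partial\Omega$. The O--T chord corresponds to a triple $(t_0, y_0, 1)$ such that $G(t_0,y_0,1) \in T\partial\Omega \subset TM$. Applying Proposition~\ref{thm:fintransv} with $S_1$ a neighborhood of $y_0$ in $\partial\Omega$ (the parallel hypersurfaces $S_1(t)$ being the level sets of the signed $g_0$--distance to $\partial\Omega$) and $S_2$ a neighborhood of $p_0=F(t_0,y_0)$ in $\partial\Omega$, condition (b) --- namely $\alpha^{\partial\Omega}_{p_0}(v_0,v_0) \ne 0$ --- holds by our construction. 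Therefore $G$ is transversal in $TM$ to $T\partial\Omega$ at $v_0$.

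Finally, I would invoke stability of transversal intersections. Since geodesics and Jacobi fields solve ODEs whose coefficients are polynomial in the first and second derivatives of $g$, both $G_g$ and its first differential $dG_g$ vary continuously with $g$ in the $C^2$--topology. Hence for any metric $g$ that is $C^2$--close enough to $g_0$, the perturbed map $G_g$ is transversal to $T\partial\Omega$ in a neighborhood of $v_0$, and the implicit function theorem provides a point $(t_g,y_g,\lambda_g)$ close to $(t_0,y_0,1)$ with $G_g(t_g,y_g,\lambda_g) \in T\partial\Omega$. Rescaling in the parameter, this gives an O--T chord for $(\overline\Omega,g)$, so every small $C^2$--perturbation of $g_0$ still admits O--T chords. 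The main technical obstacle is ensuring that the dependence of $(F_g, dF_g)$ on $g \in C^2$ is strong enough to make the transversality perturbation argument go through; this is a standard but careful application of ODE regularity with respect to parameters, and requires the Jacobi equation (second order in $g$) to be controlled, which is exactly why the $C^2$--topology is the correct choice.
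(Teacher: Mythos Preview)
Your proposal is correct and follows essentially the same strategy as the paper's own proof: construct a metric with an O--T chord at which condition~(b) of Proposition~\ref{thm:fintransv} holds, then invoke stability of transversal intersections under $C^2$--small perturbations of $g$. The paper is terser---it merely asserts that such a metric is ``easily constructed'' and that condition~(b) is ``rather easy to obtain''---whereas you supply an explicit planar bean-shaped domain and a product/smoothing construction for $N\ge3$, and you spell out why the $C^2$--topology is the right one (control of the Jacobi equation); one minor point of bookkeeping is that Proposition~\ref{thm:fintransv} is literally stated at $t=0$, so to apply it at the tangent endpoint $p_0$ you should take $S_1$ to be the parallel hypersurface $S_1(t_0)$ through $p_0$ rather than the portion of $\partial\Omega$ near $y_0$, but this is just a relabeling and does not affect the argument.
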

\begin{proof}
Given a metric $g$ on $\mathds D^N$, an O--T chord in $\mathds D^N$ relative to $g$ corresponds to the intersection of the tangent bundle of some open portion $S_2$ of $\partial\mathds D^N$ and the normal bundle of some other open portion $S_1$ of $\partial\mathds D^N$.
Evidently, one can easily construct a (smooth) metric $g$ for which such intersection exists, and for which either one of conditions (a) and (b) of Proposition~\ref{thm:fintransv} holds. For instance, condition (b) is rather easy to obtain. In this situation, the transversality implies that sufficiently $C^2$-small perturbations of the metric will preserve the existence of some intersection between the normal and the tangent bundle of $\partial\mathds D^N$. This says that O--T chords will persist by sufficiently $C^2$-small perturbations of the metric, i.e., the set of metrics in $\mathds D^N$ admitting O--T chords has nonempty interior.
\end{proof}
It is not hard to see that, in fact, the conclusion of Corollary~\ref{thn:nonemptyinterior} holds more generally for arbitrary smooth (compact) manifolds with boundary, whose dimension is greater than or equal to $2$.

\end{document}